\documentclass[12pt,reqno]{amsart}
\usepackage{amssymb}
\usepackage[curve,matrix,arrow]{xy}
\textwidth 15cm
  \oddsidemargin 0.7cm
  \evensidemargin 0.7cm
  \textheight 45\baselineskip
\newtheorem{thm}{Theorem}[section]
\newtheorem{lemma}[thm]{Lemma}
\newtheorem{cor}[thm]{Corollary}
\newtheorem{prop}[thm]{Proposition}

\newtheorem{notation}[thm]{Notation}

\theoremstyle{definition}
\newtheorem{rem}[thm]{Remark}

\newcommand{\qbox}[1]{\quad\hbox{#1}\quad}


\def\fS{{\mathfrak{S}}}
\def\bZ{{\mathbb Z}}
\def\bF{{\mathbb F}}



\def\Hom{\operatorname{Hom}\nolimits}
\def\End{\operatorname{End}\nolimits}

\def\Inf{\operatorname{Inf}\nolimits}

\def\proj{\operatorname{(proj)}\nolimits}

\def\Det{\operatorname{Det}\nolimits}

\def\SL{\operatorname{SL}\nolimits}
\def\GL{\operatorname{GL}\nolimits}
\def\PSL{\operatorname{PSL}\nolimits}
\def\PGL{\operatorname{PGL}\nolimits}
\def\SP#1#2{\operatorname{Sp}\nolimits}

\def\bZ{{\mathbb Z}}
\def\bfq{{\mathbb F}_q}
\def\whg{{\widehat G}}
\def\whn{{\widehat N}}
\def\whl{{\widehat L}}
\def\whh{{\widehat H}}
\def\whs{{\widehat S}}

\def\whj{{\widehat J}}
\def\whz{{\widehat Z}}

\def\gcd{\operatorname{gcd}}

\newcommand{\ls}[2]{{}^{{#1}}\!{{#2}}}

\title[Endotrivial modules for finite groups of Lie type $A$]
{Endotrivial modules for finite groups of Lie type $A$ in 
nondefining characteristic}

\author{\sc Jon F. Carlson}
\address
{Department of Mathematics\\ University of Georgia \\
Athens\\ GA~30602, USA}
\thanks{Research of the first author was supported in part by NSF
grant DMS-1001102}
\email{jfc@math.uga.edu}
\author{\sc Nadia Mazza}
\address
{Department of Mathematics and Statsitics\\ University of
  Lancaster\\ Lancaster \\LA1 4YF, UK} 
\email{n.mazza@lancaster.ac.uk}
\author{\sc Daniel K. Nakano}
\address
{Department of Mathematics\\ University of Georgia \\
Athens\\ GA~30602, USA}
\thanks{Research of the third author was supported in part by NSF
grant DMS-1402271}
\email{nakano@math.uga.edu}
\date{\today}

\begin{document}

\begin{abstract}
Let $G$ be a finite group such that 
$\SL(n,q)\subseteq G \subseteq \GL(n,q)$ and
$Z$ be a central subgroup of $G$. In this 
paper we determine the  
group $T(G/Z)$ consisting of the equivalence classes 
of endotrivial $k(G/Z)$-modules 
where $k$ is an algebraically closed field of characteristic 
$p$ such that $p$ does not divide $q$. 
The results in this paper complete the classification of 
endotrivial modules for all finite groups of Lie Type $A$, initiated
earlier by the authors. 
\end{abstract}
\maketitle

\section{Introduction}\label{S:intro} 
Let $G$ be a finite group and $k$ be a
field of characteristic $p> 0$.
The group of endotrivial $kG$-modules was first
introduced for $p$-groups by Dade \cite{D1,D2} nearly
forty years ago. He showed that the endotrivial
modules for a Sylow $p$-subgroup $S$ of $G$
are the building blocks of the endo-permutation $kS$-modules which are
the sources of the irreducible $kG$-modules when the group $G$ is
$p$-nilpotent. 
For any finite group $G$,
tensoring with an endotrival $kG$-module induces a
self-equivalence on the stable category of $kG$-modules
modulo projectives. Thus the group of endotrivial modules
is an important part of the Picard group of self-equivalences
of the stable category, namely, the self-equivalences
of Morita type. In addition, the endotrivial modules
are the modules whose deformation rings are universal
and not just versal (see \cite{BlCh}).

The endotrivial modules for an abelian  $p$-group were
classified by Dade and a complete classification of 
endotrivial modules
over any $p$-group was completed several
years later by the first author and
Th\'evenaz \cite{CT2, CT3, Ca}  building on the work of Alperin
\cite{Al} and others. Since then there has been an effort
to compute the group $T(G)$ of endotrivial modules for almost simple and
quasi-simple groups $G$. The proofs of \cite{CMT} suggest that this
might be an important step in the computation of $T(G)$ for an arbitrary
finite group $G$. The group $T(G)$ has been
determined for finite groups of
Lie type in the defining characteristic in \cite{CMN1},
and for symmetric and alternating groups in
\cite{CMN2, CHM}. Other results can
be found in \cite{CMT,LMS,NR,Rob}.

Every one of the papers in this project has produced 
important advances for computing and determining
endotrivial modules. This paper continues that development,
presenting a significant improvement of a method introduced
in \cite{CMN3}. 
The method was inspired by the development
by Balmer \cite{Bal} of ``weak $H$-homomorphisms" which
describe the kernel of the restriction map $T(G) \to T(H)$
when $H$ is a subgroup of $G$ that contains a Sylow
$p$-subgroup of $G$.
The new technique with some variations
allows the computations of the group of endotrivial modules for all
finite groups of Lie type $A$ in nondefining characteristic.
In all but a few examples of small Lie rank and small 
characteristic we show that the torsion part of $T(G)$
is generated by the isomorphism classes of
one-dimensional modules.  There are a couple of instances
in this paper when it is necessary to call upon a somewhat
more sophisticated variation of the method developed in
\cite{CT4}.

Hence, the goal of this paper is to describe $T(G)$ for all
finite groups of Lie type $A$ in nondefining characteristic, completing
the work started in \cite{CMN3}. The general result is the following. 
The structure of the groups of endotrivial modules for cases not 
covered by this theorem are treated in later sections. 

\begin{thm} \label{thm:noncyclic1a} 
Let $k$ be an algebraically closed field of 
prime characteristic $p$ and
$q$ a prime power with $p$ not dividing $q$, 
and let $e$ be the least positive
integer such that $p$ divides $q^e-1$.
Let $G$ be a finite group of order divisible by $p$ such that 
$\SL(n,q)\subseteq G \subseteq \GL(n,q)$, and let $Z$ be a central
subgroup of $G$.
Assume that the following
conditions hold. 
\begin{itemize} 
\item[(a)]  In all cases, $n \geq 2e$.
\item[(b)]  If $e=1$, $n=2$ and $p\geq 3$, then $Z$ does not contain
a Sylow $p$-subgroup of $Z(G)$.
\item[(c)]  If $e = 1$ and $n=p=3$, then $Z$ does not 
contain a Sylow $3$-subgroup of $Z(G)$
(which happens if and only if and only if $3$ divides $\vert G/(Z\cdot
\SL(3,q)) \vert$).   
\item[(d)]  If $p = 2$, then $n > 3$. 
\end{itemize}
Then 
\[
T(G/Z) \cong \bZ \oplus X(G/Z),
\] 
where $X(G/Z)$ is the group under tensor product of 
$k(G/Z)$-modules of dimension one and the torsion free part 
of $T(G)$ is 
generated by the class of $\Omega(k)$.  
\end{thm}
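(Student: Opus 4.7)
The plan is to prove Theorem \ref{thm:noncyclic1a} by separately pinning down the torsion-free and torsion parts of $T(G/Z)$, with the bulk of the work devoted to the torsion summand.

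For the torsion-free part, I would first verify that under conditions (a)--(d) a Sylow $p$-subgroup $S$ of $G/Z$ has $p$-rank at least $2$ and is neither cyclic, nor (when $p=2$) semi-dihedral or generalised quaternion. The hypothesis $n\ge 2e$ ensures that a maximal torus of shape $(C_{q^e-1})^{\lfloor n/e\rfloor}$ in $\GL(n,q)$ contains at least two commuting cyclic $p$-subgroups; conditions (b)--(d) exclude precisely the sporadic collapses in small rank where quotienting by $Z$ could reduce the $p$-rank of the Sylow. Once the non-cyclic/non-quaternion/non-semi-dihedral condition is established, the Alperin--Carlson--Th\'evenaz classification of endotrivial modules over the Sylow, together with the restriction map, forces the torsion-free part of $T(G/Z)$ to have rank one, generated by the class of $\Omega(k)$.

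For the torsion part, the inclusion $X(G/Z)\subseteq TT(G/Z)$ is immediate, so the content is the reverse inclusion. I would choose a Sylow-containing subgroup $H\le G$ for which $T(H)$ is well understood---naturally the normaliser $N_G(T)$ of a Coxeter-type maximal torus $T$ associated with the partition $n=\lfloor n/e\rfloor\cdot e + r$, or a closely related Levi normaliser. Balmer's theorem (and its refinement used in \cite{CMN3}) then identifies the kernel of the restriction $T(G/Z)\to T(H/(Z\cap H))$ with the group $A(G,H)$ of weak $H$-homomorphisms $G\to k^\times$ modulo the visibly trivial ones, while the image of restriction is constrained by the explicit computation of $T(H/(Z\cap H))$.

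The heart of the argument---and the anticipated main obstacle---is to show that every weak $H$-homomorphism $\alpha\colon G\to k^\times$ is in fact a genuine homomorphism, hence factors through the abelianisation of $G/Z$ and corresponds to a class in $X(G/Z)$. I would run the machinery of \cite{CMN3}, enhanced as needed by the more delicate variant of \cite{CT4}, by exploiting the Chevalley commutator relations among root subgroups $\xa{t}$ in $\SL(n,q)$: because $n\ge 2e$ supplies enough pairs of root subgroups that are simultaneously ``$p$-local'' in the sense of Balmer, each such $\alpha$ is forced to be trivial on every root subgroup, and hence on $[G,G]\cdot\SL(n,q)$. The small-rank and small-characteristic analysis (particularly $p\in\{2,3\}$) is precisely where the exceptional clauses (b)--(d) enter, since there the supply of usable commutator relations is too thin to conclude triviality of $\alpha$ without additional input.

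Finally, the passage from $T(G)$ to $T(G/Z)$ is handled by inflation: endotrivial $k(G/Z)$-modules are exactly the endotrivial $kG$-modules on which $Z$ acts trivially, and this identification is compatible with the descriptions above. The splitting $T(G/Z)\cong \bZ\oplus X(G/Z)$ then follows because the projection to the torsion-free part splits through the class of $\Omega(k)$, while $X(G/Z)$ embeds as the group of one-dimensional characters of $G/Z$ that factor through the abelianisation.
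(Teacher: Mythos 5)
Your overall architecture (torsion-free part via rank considerations, torsion part via the kernel of restriction to a Sylow-containing subgroup, inspired by Balmer) is the right shape, but two steps at the heart of your plan do not work as stated. First, your reduction ``endotrivial $k(G/Z)$-modules are exactly the endotrivial $kG$-modules on which $Z$ acts trivially'' is false precisely in the cases that carry most of the content of the theorem, namely when $p$ divides $|Z|$ (e.g.\ $\PSL(n,q)$ with $p\mid\gcd(n,q-1)$): inflation along a central $p$-quotient never produces endotrivial modules, since $Z\cap S$ acts trivially on the inflated module. The paper cannot and does not argue this way; it works in $G/Z$ directly, computing $N_{G/Z}(Q/Z)$ for a suitable characteristic subgroup $Q$ of $S$ and exhibiting the needed generators inside commutator subgroups of normalizers of noncentral $p$-subgroups (Proposition \ref{prop:rp-case}, Proposition \ref{prop:gensl}), and it uses inflation only along the $p'$-quotient $G/\widehat Z\to G/Z$, where trivial Sylow restriction is genuinely preserved.

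Second, the key claim that Chevalley commutator relations among root subgroups force every weak $H$-homomorphism $\alpha$ to be trivial has no mechanism behind it in nondefining characteristic. A weak homomorphism is multiplicative only on pairs/triples whose associated conjugate intersections of $H$ have order divisible by $p$; the root subgroups $\xa{t}$ are unipotent $p'$-subgroups here, so there is no evident $p$-local linkage that lets you apply the multiplicativity axiom along the commutator relations, and even if $\alpha$ were shown trivial on each root subgroup, triviality on a generating set does not give triviality on the generated subgroup for a non-multiplicative function. The actual work in the paper (Theorem \ref{thm:method}, Propositions \ref{prop:composite} and \ref{prop:sleps}) replaces this by a careful choice of auxiliary subgroups --- Levi subgroups satisfying Proposition \ref{prop:directprod}, centralizers of $p$-elements such as $C_G(\hat\varphi(Y))$, and the chain $\rho^i(Q)$ --- so that each needed generator of $N_G(Q)$ lies in the commutator subgroup of a subgroup $H_i$ meeting $H$ in a group of order divisible by $p$; one case ($\SL(6,2)$, $p=3$) even requires $\rho^3$ and the finer results of \cite{CT4}, and the case $e\nmid n$ requires the injectivity of $T(\SL(n,q))\to T(\SL(n-1,q))$ from \cite{CMN3}. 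Your sketch supplies none of these verifications. (A smaller point: for the torsion-free part, ``$S$ non-cyclic, non-quaternion, non-semi-dihedral plus restriction'' is not enough; you need the fusion statement that all maximal elementary abelian subgroups of rank $2$ in $G/Z$ lie in one conjugacy class, or have rank at least $3$, as in Corollary \ref{cor:torfree}.)
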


Theorem~\ref{thm:noncyclic1a} is established in Sections \ref{sec:SLncase}
and \ref{sec:thm11}. This follows some preliminaries on endotrivial
modules in Section \ref{sec:prelim-endo}, a description of the main
method that we use in most proofs in Section \ref{sec:maintec}, and
preliminaries on groups of Lie type A in Section \ref{sec:group-structure}.
The proof of the main theorem stated above 
is accomplished in two major steps. In Section 
\ref{sec:SLncase}, we treat the case that $G = \SL(n,q)$ and 
$Z = \{1\}$. In Section \ref{sec:thm11}, the 
result is extended to any $G$ and $Z$ subject to the assumptions of
Theorem~\ref{thm:noncyclic1a}. 

Sections~\ref{sec:gl2} through ~\ref{sec:appendix} deal with the 
cases that are excluded by the hypotheses of
Theorem~\ref{thm:noncyclic1a}. 
In the nontrivial cases excluded by condition (a), namely, 
$e \leq n<2e$, the Sylow $p$-subgroup of $G$ is cyclic and the structure of
$T(G)$ was provided in \cite[Theorem 1.2]{CMN3}. For the sake
completeness, the theorem is stated in an appendix
(cf. Theorem~\ref{thm:cyclic}).  The one additional case 
in which the Sylow $p$-subgroup of $G/Z$ is cyclic is the case 
excluded by hypothesis (b) of Theorem \ref{thm:noncyclic1a}. 
This case is dealt with in Theorem~\ref{thm:sl2case}. 

The case excluded from Theorem~\ref{thm:noncyclic1a} by
condition~(c), is treated in Section \ref{sec:char3}. 
In Sections~\ref{sec:sl32} and \ref{sec:sl22}
we compute of the groups of endotrivial
modules when $p=2$ and $n = 2$ or $3$, excluded from
Theorem~\ref{thm:noncyclic1a} by condition~(d).  These sections 
use results of \cite{CT4}, that show the existence of exotic
endotrivial modules in a few cases. By ``exotic" we mean trivial source
modules that do not have dimension one.  

Our results for the nondefining characteristic, taken together with the results in \cite{CMN1} 
(for the defining characteristic), provide a complete description of the group of endotrivial modules for finite groups of 
Lie type $A$ over algebraically closed fields of arbitrary characteristic.


\section{Endotrivial Modules} \label{sec:prelim-endo}

Throughout the paper, let $k$ be an algebraically 
closed field of prime characteristic~$p$ and 
$G$ be a finite group with $p$ 
dividing the order of $G$. 
All $kG$-modules in this paper are assumed to be 
finitely generated.
For $kG$-modules $M$ and $N$, let $M^*=\Hom_k(M,k)$ 
denote the $k$-dual of $M$ and write 
$M\otimes N=M\otimes_kN$. The modules $M^*$ and
$M\otimes N$ become $kG$-modules under the 
the usual Hopf algebra structure on $kG$.

A $kG$-module $M$ is {\em endotrivial} provided its endomorphism algebra 
$\End_k(M)$ splits as the direct sum of $k$ and a projective
$kG$-module. That is, since $\Hom_k(M, N) \cong M^*\otimes N$, as
$kG$-modules, $M$ is endotrivial if and only if  
\[
\End_k(M)  \cong  M^*\otimes M \cong k \ \oplus \ P 
\]
for some projective $kG$-module $P$.

Any endotrivial $kG$-module $M$ has a unique 
indecomposable nonprojective endotrivial 
direct summand $M_0$ (\cite{CMN1}). 
This allows us to define an equivalence relation 
on the class of endotrivial $kG$-modules;
namely, two endotrivial $kG$-modules are equivalent if they have
isomorphic indecomposable nonprojective summands. That is, two endotrivial $kG$-modules are 
equivalent if they are isomorphic in the stable category.
The set of equivalence classes of endotrivial $kG$-modules is an abelian
group with the operation induced by the tensor of product over $k$,
$$
[M]+[N]=[M\otimes N].
$$  
The identity element of $T(G)$ is $[k]$, and the inverse of $[M]$ is
$[M^* ]$. The group $T(G)$ is called the {\em group of endotrivial
$kG$-modules}. 

It is well-known that the group of endotrivial modules  is 
a finitely generated abelian group.   Therefore, 
\[
T(G) \cong TF(G) \ \oplus \ TT(G)
\] 
where $TT(G)$ is the torsion subgroup of $T(G)$ 
and $TF(G)$ is a torsion
free complement. The rank 
of $TF(G)$  depends only on the $p$-local structure of $T(G)$,
as described in the next theorem. 
Recall that the $p$-rank of a group is the maximum of the ranks
of elementary abelian $p$-subgroups of $G$, and a maximal elementary
abelian $p$-subgroup is an elementary abelian $p$-subgroup which is not
properly contained in any other elementary abelian $p$-subgroup. 
Let $n_G$ be the number of conjugacy classes of
maximal elementary abelian $p$-subgroups of $G$ of order $p^2$.

\begin{thm}\label{thm:rank} \cite[Theorem 3.1]{CMN1}
Let $G$ be a finite group. The rank of $TF(G)$ is equal to the
number~$n_G$ defined above if $G$ has $p$-rank at most $2$, and is equal
to $n_G+1$ if $G$ has rank at least $3$. 
\end{thm}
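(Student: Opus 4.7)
The plan is to detect endotrivial modules by restriction to elementary abelian $p$-subgroups and then invoke Dade's classification of $T(E)$ for such $E$. Recall that when $E$ is elementary abelian of rank at least $2$, Dade proved $T(E) \cong \bZ$, generated by the class of $\Omega(k)$, whereas for $E$ cyclic, $T(E)$ is finite. Thus only the maximal elementary abelian $p$-subgroups of rank at least $2$ can contribute to the torsion-free part. I would begin by setting up the degree homomorphism
\[
\delta : T(G) \longrightarrow \prod_{[E]} \bZ,
\]
where $[E]$ ranges over the $G$-conjugacy classes of maximal elementary abelian $p$-subgroups of rank at least $2$ and the coordinate at $[E]$ records the degree of $\res^G_E M$ in $T(E) \cong \bZ$. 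By a standard Puig/Alperin-style detection argument, the kernel of $\delta$ is torsion, so the torsion-free rank of $T(G)$ equals the rank of the image of $\delta$.

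Next I would analyze the image of $\delta$, which is constrained by compatibility at intersections. If $E$ and $E'$ are elementary abelian of rank at least $2$ with $F = E \cap E'$ also of rank at least $2$, then restricting any $M$ to $E$ and to $E'$, and then further down to $F$, must produce the same element of $T(F) \cong \bZ$, forcing the coordinates at $[E]$ and $[E']$ to agree. Propagating this relation through chains, all conjugacy classes of maximal elementary abelian subgroups of rank at least $3$ become identified: any two such rank-$\geq 3$ subgroups of $G$ can be connected by a chain of rank-$\geq 3$ subgroups pairwise sharing rank-$2$ subgroups, inside the poset of elementary abelian $p$-subgroups of $G$. By contrast, a maximal elementary abelian of rank exactly $2$ has no proper elementary abelian overgroup, so no such compatibility applies and its coordinate is free.

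Putting these together will give the count: if $G$ has $p$-rank at most $2$, no rank-$\geq 3$ subgroups occur and each of the $n_G$ conjugacy classes of maximal rank-$2$ subgroups contributes one independent copy of $\bZ$, so the rank is $n_G$. If $G$ has $p$-rank at least $3$, the maximal rank-$\geq 3$ subgroups collectively contribute a single copy of $\bZ$ realized by $[\Omega(k)]$ (whose degree is $1$ on every such $E$), while the $n_G$ rank-$2$ conjugacy classes contribute $n_G$ further independent copies, giving $n_G + 1$ in total. The hard part will be showing that $\delta$ is surjective onto this set of compatible tuples up to finite index. For each rank-$2$ class $[E]$ one must exhibit an endotrivial module whose degree is concentrated at $[E]$; the standard route is a Green-correspondence/trivial-source construction attached to $N_G(E)$, and the delicate step is verifying that the resulting module is genuinely endotrivial rather than merely endopermutation.
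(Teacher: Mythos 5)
Your overall strategy --- detect on elementary abelian $p$-subgroups using Dade's computation of $T(E)$, observe that the degree homomorphism $\delta$ has torsion kernel, use compatibility along rank-$\geq 2$ intersections together with the connectedness of the poset of elementary abelians of rank at least $3$ to collapse all rank-$\geq 3$ classes to one coordinate, and then realize the remaining coordinates --- is indeed the strategy behind the quoted result; the present paper gives no proof but cites \cite[Theorem 3.1]{CMN1}, whose argument rests on exactly these ingredients (Dade's theorem, the connectivity lemma for the poset of elementary abelians, the finiteness of the kernel of restriction to a Sylow $p$-subgroup, and Alperin's construction \cite{Al}). The upper bound, rank $\leq n_G$ (resp.\ $n_G+1$), does follow from your sketch once the torsion-kernel statement and the connectivity lemma are granted as known inputs.

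The genuine gap is the step you yourself flag as the hard one, and the route you propose for it would fail. To get the lower bound you must exhibit, for each conjugacy class of maximal elementary abelian subgroups of rank $2$, an endotrivial $kG$-module whose degree vector is (up to finite index) concentrated at that class. A ``Green-correspondence/trivial-source construction attached to $N_G(E)$'' cannot produce such modules: if $M$ is an endotrivial module with trivial source, then for every elementary abelian $E$ of rank at least $2$ the restriction $M_{\downarrow E}$ is both an endotrivial module, hence of the form $\Omega^n_E(k)\oplus\proj$, and a $p$-permutation module, hence a direct sum of modules $k[E/F]$; since $\dim k[E/F]$ is a power of $p$ while endotrivial modules have dimension prime to $p$, this forces $M_{\downarrow E}\cong k\oplus\proj$, i.e.\ degree $0$ at every $E$. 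Moreover the classes of trivial-source endotrivial modules form a finite subgroup of $T(G)$ (there are only finitely many indecomposable trivial-source modules), so they are pure torsion and contribute nothing to $TF(G)$. The constructions that actually furnish the lower bound are of a different kind: Alperin's relative syzygies \cite{Al} for the Sylow $p$-subgroup (or the full determination of $T(P)$ for $p$-groups in \cite{CT2,CT3}), combined with the finiteness of the kernel of $T(G)\to T(S)$ and a fusion-stability argument to pass from $S$-classes to $G$-classes of maximal elementary abelians, as in \cite{CMN1}. Without replacing your proposed construction by something of this sort, only the inequality rank $\leq n_G$ (resp.\ $n_G+1$) is established by your argument.
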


We say that a  $kG$-module has {\em trivial Sylow restriction} if its 
restriction to a Sylow $p$-subgroup $S$ of $G$ is isomorphic to the direct 
sum of $k$ with some projective module. Equivalently, a $kG$-module 
with trivial Sylow restriction is the direct sum of a trivial source
endotrivial $kG$-module and some projective module.
In particular, its equivalence class is in the
kernel of the restriction map $T(G) \to T(S)$. The next result is well
known and very important to our development. Its proof is based on the
fact that an indecomposable module with trivial Sylow restriction is 
a direct summand of $k_S^{\uparrow G}$ where $S$ is a Sylow
$p$-subgroup of $G$.  
 
\begin{prop}\label{prop:normal}
If $G$ has a nontrivial normal $p$-subgroup, then every indecomposable
$kG$-module with trivial Sylow restriction has dimension one. 
\end{prop}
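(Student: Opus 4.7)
The plan is to use the hint noted just before the statement: an indecomposable $kG$-module $M$ with trivial Sylow restriction is isomorphic to a direct summand of $\Ind_S^G k$, where $S$ is a Sylow $p$-subgroup of $G$. Once that reduction is in place, the proof is essentially forced by applying Mackey's formula together with the hypothesis that $G$ has a nontrivial normal $p$-subgroup $N$.

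To establish that $M$ is a summand of $\Ind_S^G k$, I would apply Frobenius reciprocity: the hypothesis $M\!\downarrow_S \cong k \oplus (\text{proj})$ produces a nonzero $kS$-map $M\!\downarrow_S \to k$ and hence, by adjunction, a nonzero $kG$-map $M \to \Ind_S^G k$, and similarly a nonzero map in the other direction. Since $M$ is indecomposable, this forces $M$ to appear as a direct summand of $\Ind_S^G k$. Next, because $N$ is a normal $p$-subgroup of $G$, it is contained in every Sylow $p$-subgroup, and in particular $N \subseteq gSg^{-1}$ for every $g \in G$. Applying the Mackey formula to $(\Ind_S^G k)\!\downarrow_N$, each double-coset representative $g \in N\backslash G/S$ contributes $\Ind_{N\cap gSg^{-1}}^N k = \Ind_N^N k = k$, so the restriction to $N$ is a trivial $kN$-module. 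Consequently $N$ acts trivially on the summand $M$.

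To finish, I would examine the projective summand. Write $M\!\downarrow_S \cong k \oplus P$ with $P$ projective over $kS$. Since $N$ acts trivially on $M$, it acts trivially on $P$. On the other hand, $kS$ is free as a $kN$-module, so $P\!\downarrow_N$ is a free $kN$-module; being simultaneously trivial and free over the nontrivial $p$-group $N$ (note that the regular module $kN$ is nontrivial whenever $|N| > 1$), it must be zero. Therefore $M\!\downarrow_S \cong k$, and hence $\dim_k M = 1$.

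The only delicate step is the Mackey calculation, which is unusually clean in this situation because normality of $N$ collapses every double-coset contribution to a trivial module; without normality one would have to track the intersections $N \cap gSg^{-1}$ and could not conclude that $N$ acts trivially on $M$.
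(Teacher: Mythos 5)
Your overall route is exactly the one the paper hints at (it gives no details, only the remark that an indecomposable module with trivial Sylow restriction is a direct summand of $k_S^{\uparrow G}$): reduce to that fact, use the Mackey formula plus normality of $N$ to see that $N$ acts trivially on $M$, and then kill the projective part of $M_{\downarrow S}$ because a nonzero free $kN$-module cannot have trivial $N$-action. Your Mackey computation and the final step are correct as written.

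The gap is in your justification of the first step. Nonzero $kG$-homomorphisms $M \to \Ind_S^G k$ and $\Ind_S^G k \to M$, together with indecomposability of $M$, do not force $M$ to be a direct summand of $\Ind_S^G k$: for example, with $G=S$ a nontrivial $p$-group there are nonzero maps $k \to kS$ (into the socle) and $kS \to k$ (augmentation), yet $k$ is not a summand of the free module $kS$. To get a summand you must exhibit a composite $M \to \Ind_S^G k \to M$ that is an automorphism, and Frobenius reciprocity alone does not produce one. The standard repair is relative projectivity with respect to $S$: since $[G:S]$ is prime to $p$, the composite of the natural maps $M \to \Ind_S^G(\res_S M) \to M$ is multiplication by $[G:S]$, hence invertible, so $M$ is a direct summand of $\Ind_S^G(\res_S M) \cong \Ind_S^G k \oplus \Ind_S^G P$, where $M_{\downarrow S} \cong k \oplus P$ with $P$ projective over $kS$. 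The summand $\Ind_S^G P$ is projective, while $M$ is not (its restriction to the nontrivial group $S$ has the nonprojective summand $k$), so the Krull--Schmidt theorem forces $M$ to be a summand of $\Ind_S^G k$, which is precisely the fact the paper quotes. With that substitution the rest of your argument goes through verbatim.
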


Another easy result that we find useful is the following. 

\begin{prop} \label{prop:selfnormal}
Suppose that a Sylow $p$-subgroup $S$ of $G$ is self-normalizing (i.e. 
$N_G(S) = S$). Then the only indecomposable $kG$-module with trivial 
Sylow restriction is the trivial module. 
\end{prop}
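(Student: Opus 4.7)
The plan is to use the observation noted just before Proposition~\ref{prop:normal} that any indecomposable $kG$-module with trivial Sylow restriction occurs as a direct summand of the permutation module $\Ind_S^G k$. Under the hypothesis $N_G(S)=S$, I would argue via the Brauer quotient at $S$ that the only such summand is the trivial module.

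First I would verify that $k$ is itself a summand of $\Ind_S^G k$. By Frobenius reciprocity both $\Hom_{kG}(k,\Ind_S^G k)$ and $\Hom_{kG}(\Ind_S^G k,k)$ are one-dimensional, and the composite of the unit $k\to\Ind_S^G k$ with the counit $\Ind_S^G k\to k$ is multiplication by $[G:S]$, a unit in $k$ since $S$ is a Sylow $p$-subgroup. This yields a decomposition $\Ind_S^G k \cong k\oplus N$ for some complement $N$.

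Next I would compute the Brauer quotient at $S$. Since $\Ind_S^G k$ is the permutation module on $G/S$, its Brauer quotient at $S$ is the $k$-span of the $S$-fixed cosets. A coset $gS$ is fixed by $S$ precisely when $g\in N_G(S)$, and the hypothesis $N_G(S)=S$ collapses this fixed set to the single coset $\{S\}$, giving $(\Ind_S^G k)(S)\cong k$. Because $k(S)\cong k$ as well---the transfer $\operatorname{tr}_Q^S(1)=[S:Q]\cdot 1$ vanishes in $k$ for every proper subgroup $Q<S$---additivity of the Brauer quotient forces $N(S)=0$.

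Finally, if $M$ is an indecomposable summand of $\Ind_S^G k$ with $\res^G_S M\cong k\oplus P$ for some projective $kS$-module $P$, then $M(S)$ coincides as a vector space with the Brauer quotient of $\res^G_S M$. A projective $kS$-module is free, and a free $kS$-module has vanishing Brauer quotient at $S$ since its $S$-invariants lie in the image of $\operatorname{tr}_1^S$. Thus $M(S)\cong k\neq 0$, so $M$ cannot be a summand of $N$, and the decomposition $\Ind_S^G k=k\oplus N$ forces $M\cong k$. The main point requiring care is to invoke the standard compatibilities of the Brauer quotient---its additivity on direct summands and its computation on permutation modules as a fixed-point functor---both of which are routine.
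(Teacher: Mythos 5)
Your argument is correct, and it reaches the conclusion by a genuinely different route from the paper. The paper's proof is a two-line application of Green correspondence: the Green correspondent of $M$ lives over $N_G(S)=S$, has dimension one by Proposition~\ref{prop:normal}, hence is trivial, and uniqueness of the correspondence gives $M\cong k$. You instead work directly with the permutation module $\Ind_S^G k$ (using, as you may, the fact recorded before Proposition~\ref{prop:normal} that $M$ is a summand of it), split off the trivial summand via the unit--counit composite being multiplication by the unit $[G:S]$, and then distinguish summands by the Brauer construction at $S$: the Brauer quotient of $k[G/S]$ at $S$ is spanned by the $S$-fixed cosets, which under $N_G(S)=S$ is one-dimensional, while $k(S)\cong k$ and free $kS$-modules have vanishing Brauer quotient, so any summand with trivial Sylow restriction has nonzero Brauer quotient and must, by Krull--Schmidt, be the trivial summand. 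The standard compatibilities you invoke (additivity, the fixed-point description on permutation modules, dependence of $M(S)$ only on $M{\downarrow}_S$) are all correct. What each approach buys: the paper's proof is shorter and leans on machinery already in place (Proposition~\ref{prop:normal} plus Green correspondence), whereas yours is essentially self-contained modulo basic properties of the Brauer construction and in effect re-derives the relevant piece of the Green/Brou\'e correspondence for trivial-source summands of $k[G/S]$, making transparent exactly where the hypothesis $N_G(S)=S$ enters (it collapses the $S$-fixed cosets to a point). Both are valid; only a minor remark is worth adding in your write-up, namely that $M$ is nonprojective because $S\neq 1$ (the paper assumes $p$ divides $|G|$), which is what guarantees $M$ is a summand of $\Ind_S^G k$ rather than merely of $\Ind_S^G(k\oplus P)$.
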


\begin{proof}
The Green correspondent of any indecomposable  $kG$-module $M$ with 
trivial Sylow restriction must have dimension one by the above proposition. 
Hence the Green correspondent is the trivial module and  $M \cong k$. 
\end{proof}

The following theorem has several applications to finite groups of Lie type. Note that 
the first condition in the statement is equivalent to saying
that $G$ contains the derived subgroup 
$[H,H] \times [J,J]$ of $H\times J$. 

\begin{thm} \label{thm:directprod}
Suppose that $H$ and $J$ are finite groups and that $G$ 
is a normal subgroup of the direct product $H \times J$ with abelian
factor group $(H\times J)/G$ and such that
the orders of both $G \cap H$ and $G \cap J$ are divisible
by $p$ (here we are identifying $H$ with $H \times \{1\}$
and $J$ with $\{1\} \times J$ in $H \times J$). Then any 
indecomposable $kG$-module with trivial Sylow restriction 
has dimension one. 
\end{thm}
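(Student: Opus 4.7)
The plan is to push the problem down to the normal subgroup $N := (G\cap H)(G \cap J)$ of $G$, which equals the internal direct product $(G\cap H) \times (G\cap J)$ since the two factors commute as subgroups of different factors of $H \times J$. Denoting Sylow $p$-subgroups of $G\cap H$ and $G\cap J$ by $S_H$ and $S_J$ (both nontrivial by hypothesis), $S_N := S_H \times S_J$ is a Sylow $p$-subgroup of $N$. If $M$ is an indecomposable $kG$-module with trivial Sylow restriction, then $M \downarrow_N$ also has trivial Sylow restriction as a $kN$-module, since extending $S_N$ to a Sylow $p$-subgroup $S_G$ of $G$ and restricting shows that the projective part of $M \downarrow_{S_G}$ restricts to a projective $kS_N$-module. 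Decomposing $M \downarrow_N = M_0 \oplus M_1 \oplus \cdots \oplus M_r$ into indecomposables, exactly one summand (say $M_0$) accounts for the unique copy of $k$ in the Sylow restriction, so $M_0$ has trivial Sylow restriction while each $M_i$ for $i \geq 1$ is projective as a $kN$-module.

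The main technical step is to show $\dim M_0 = 1$ using the direct product structure of $N$. Because $M_0 \downarrow_{S_N}$ is a permutation $kS_N$-module (trivial plus free), $M_0$ is trivial source; and since $k$ appears in $M_0 \downarrow_{S_N}$, a short Mackey computation shows that the vertex of $M_0$ must be a Sylow $p$-subgroup of $N$. Therefore $M_0$ is an indecomposable direct summand of $\Ind_{S_N}^N k \cong (\Ind_{S_H}^{G\cap H} k) \boxtimes (\Ind_{S_J}^{G\cap J} k)$, the last isomorphism holding because induction commutes with external tensor products over direct products. Since indecomposable summands of an external tensor product are themselves external tensors of indecomposable summands, we may write $M_0 \cong A \boxtimes B$ for indecomposable summands $A$ of $\Ind_{S_H}^{G\cap H}k$ and $B$ of $\Ind_{S_J}^{G\cap J}k$. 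Expanding the permutation decompositions of $A \downarrow_{S_H}$ and $B \downarrow_{S_J}$ and requiring their external tensor to be $k$ plus a free $k(S_H \times S_J)$-module forces each restriction to be $k$, since the nontriviality of both $S_H$ and $S_J$ rules out any mixed term of the form $k \boxtimes kS_J$ or $kS_H \boxtimes k$ as a projective summand. Hence $\dim A = \dim B = 1$ and $\dim M_0 = 1$.

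To conclude, I would invoke the Clifford-type observation that the isomorphism classes of indecomposable summands of $M \downarrow_N$ form a single $G$-orbit under conjugation, since $M$ is indecomposable and $N$ is normal in $G$. Because conjugation by $G$ preserves projectivity of $kN$-modules, the non-projective $M_0$ cannot be $G$-conjugate to any of the projective $M_i$; therefore $r = 0$, so $M \downarrow_N = M_0$ and $\dim M = 1$. The most delicate step is the direct product computation in the second paragraph, where the hypothesis that $p$ divides both $|G \cap H|$ and $|G \cap J|$ is exactly what is needed to forbid the mixed trivial-source summands.
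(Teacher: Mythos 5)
Your first two paragraphs are correct and in fact give a clean, genuinely different treatment of the ``over $N$'' part of the argument: writing $N=(G\cap H)\times(G\cap J)$, using that a trivial-source module containing $k$ on restriction to $S_N$ has vertex $S_N$, hence is a summand of $\Ind_{S_N}^{N}k\cong(\Ind_{S_H}^{G\cap H}k)\boxtimes(\Ind_{S_J}^{G\cap J}k)$, and then ruling out the mixed permutation summands $k[S_H/U]\boxtimes k$ and $k\boxtimes k[S_J/V]$ precisely because $S_H$ and $S_J$ are both nontrivial. (The step ``indecomposable summands of an external tensor product are external tensors of indecomposables'' is fine here because $k$ is algebraically closed.)

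The gap is in your last paragraph. The ``Clifford-type observation'' you invoke --- that the indecomposable summands of $M{\downarrow_N}$ of an indecomposable $kG$-module $M$ form a single $G$-orbit when $N\trianglelefteq G$ --- is false for modules that are not semisimple. A minimal counterexample: $G=C_2\times C_2=\langle a,b\rangle$ in characteristic $2$, $N=\langle a\rangle$, and $M=\Omega(k)$ of dimension $3$; then $M{\downarrow_N}\cong k\oplus kN$, one summand nonprojective and one projective, and they are certainly not conjugate. The same example shows that the information you have actually established, namely $M{\downarrow_N}\cong(\text{one-dimensional})\oplus\proj$, does not by itself force $\dim M=1$. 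Nor can you argue instead that $M$ is a summand of $M_0^{\uparrow G}$: your $N$ need not contain a Sylow $p$-subgroup of $G$ (e.g.\ the index $|G:N|$ can be divisible by $p$ when $(H\times J)/G$ has order prime to nothing in particular), so $M$ need not be relatively $N$-projective. This is exactly why the paper does not work with $N$ alone: it passes to $W=\big((G\cap H)\times(G\cap J)\big)S$, which is normal in $G$ (as $G/N$ is abelian) \emph{and} contains a full Sylow $p$-subgroup $S$ of $G$. Over $W$ the one-dimensionality is proved by a different device (restriction to $(G\cap H)S$, which has the normal $p$-subgroup $T$, followed by the observation that coset representatives of $(G\cap H)S$ in $W$ lie in $G\cap J$ and centralize $Q$, so $Q$ acts trivially on $M$), and only then does one induct up to $G$, using $S\subseteq W\trianglelefteq G$ to write $M\mid\Theta^{\uparrow G}$ with $\Theta$ one-dimensional. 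To repair your proof you would need an argument of this kind at the level of a normal subgroup containing a Sylow $p$-subgroup of $G$; note that over $W=NS$ your tensor-decomposition argument does not apply directly, since $W$ is no longer a direct product.
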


\begin{proof}
Let $\whh = H \cap G$ and $\whj = J \cap G$. 
Let $Q$ and $T$ denote Sylow $p$-subgroups
of $\whh$ and $\whj$, respectively, and let 
$S$ be a Sylow $p$-subgroup of $G$ that contains $Q \times T$. Note that
$T,Q\trianglelefteq S$.
Let $W = (\whh \times \whj)S$. 
Suppose that $M$ is an indecomposable  
$kW$-module with trivial Sylow restriction.
Then $M_{\downarrow \whh S} \cong \chi \oplus \proj$ for 
some indecomposable $k(\whh S)$-module $\chi$. We know
that $\chi$  has dimension one 
because $\whh S$ has a nontrivial normal $p$-subgroup, namely $T$.
Moreover,  $T$ is centralized by every element of $\whh$. 

It follows that $M$ is a direct summand of $\chi^{\uparrow W} 
\cong kW \otimes_{k(\whh S)} \ \chi$. Observe that all 
of the left coset representatives of $\whh S$ in $W$ 
can be taken to be elements of $\whj $. Because these
elements centralize $Q$, it must be that $Q$ acts trivially
on $\chi^{\uparrow W}$ and hence also on $M$. Therefore,  
the restriction of $M$ to $S$ can have no nonzero projective 
summands and $M$ must have dimension one. 

Suppose that $N$ is a $kG$-module with trivial Sylow 
restriction. Then $N_{\downarrow W} \cong \Theta \oplus \proj$,
where $\Theta$ has dimension one. This means that $N$ 
is a direct summand of $\Theta^{\uparrow G}$ and because
$W$ is normal in $G$, $(\Theta^{\uparrow G})_{\downarrow W}$
is a direct sum of conjugates of $\Theta$. It follows that 
$N$ must have dimension one. 
\end{proof}


\section{The Main Method}\label{sec:maintec}
In this section we introduce conditions that imply the triviality
of any indecomposable $kG$-module with trivial Sylow restriction. 
The method was suggested by the work of Balmer \cite{Bal},
though none of the results of \cite{Bal} are directly required in this paper. It
is worth pointing out that the method works for perfect groups 
(i.e., $[G,G] =G$), and, with some effort, it can be adapted to other cases to
prove that indecomposable $kG$-modules with trivial Sylow restriction
have dimension one.
The statement proved in Theorem~\ref{thm:method} below is sufficient for
this paper. A somewhat different version of the method is contained in
the paper \cite{CT4}.  

For each nontrivial $p$-subgroup $Q$ of
a given Sylow $p$-subgroup $S$ of $G$, we
construct a chain of subgroups: 
$$\rho^1(Q) \subseteq \rho^2(Q) \subseteq \dots.$$
These were written $\rho_{i-1}(Q)$ in \cite{CMN3}
where they were first introduced.
The subgroups are defined inductively by the following rule:
\begin{align*}
\rho^1(Q)&=[N_G(Q),N_G(Q)]    \qquad \text{and}\\
\rho^i(Q)&=\langle N_G(Q)\cap\rho^{i-1}(R)~|\{1\} \neq R\subseteq S\rangle
\qbox{for $i>1$.}
\end{align*}
In \cite{CT4}, it is shown that if $\rho^i(S) = N_G(S)$ for some $i$
(or more generally if $\rho^i(Q) = N_G(Q)$ for some nontrivial subgroup
$Q \subseteq S$ with $N_G(S) \subseteq N_G(Q)$), then the trivial
$kG$-module is the only indecomposable module with trivial Sylow
restriction. The following theorem (Theorem~\ref{thm:method}) is the simplified version of that
result needed for most of this  paper.

\begin{thm} \label{thm:method}
Let $S$ be a Sylow $p$-subgroup of $G$, and let $H$
be a subgroup of $G$ such that $N_{G}(S)\leq H$. 
Suppose that the following conditions hold.  
\begin{itemize}
\item[(A)] Every indecomposable $kH$-module with trivial
Sylow restriction has dimension one.
\item[(B)] $H = \langle g_1, \dots, g_m \rangle$ such that for each $i$, either
\begin{itemize}
\item[(1)] $g_i \in [H,H]S$, or
\item[(2)] there exists a subgroup $H_i$ of $G$ such that 
\begin{itemize}
\item[(a)] every indecomposable $kH_i$-module with trivial Sylow 
restriction has dimension one, 
\item[(b)] $p$ divides the order of $H_i \cap H$, and
\item[(c)] $g_i \in [H_i,H_i]$. 
\end{itemize}
\end{itemize}
\end{itemize}
Then the trivial module $k$ is the only indecomposable $kG$-module with
trivial Sylow restriction.
\end{thm}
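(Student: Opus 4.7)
The plan is as follows. Let $M$ be an arbitrary indecomposable $kG$-module with trivial Sylow restriction; the goal is to prove $M\cong k$. Since $H\supseteq N_G(S)$, the group $S$ remains a Sylow $p$-subgroup of $H$, so $M_{\downarrow H}$ also has trivial Sylow restriction as a $kH$-module. Decomposing $M_{\downarrow H}$ into indecomposables, exactly one nonprojective summand survives, and by hypothesis (A) it must be one-dimensional; thus
\[
M_{\downarrow H}\;\cong\;\chi\oplus P
\]
for some character $\chi\colon H\to k^\times$ and some projective $kH$-module $P$. Since $M$ and $\chi$ are Green correspondents (both indecomposable nonprojective with vertex $S$), proving $M\cong k$ reduces to proving $\chi$ is the trivial character, and since $H=\langle g_1,\dots,g_m\rangle$ this further reduces to checking $\chi(g_i)=1$ for each $i$.

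If $g_i\in[H,H]S$, then $\chi$ vanishes on $[H,H]$ (any one-dimensional character factors through the abelianization) and also on $S$, because the group $k^\times$ contains no nontrivial $p$-th root of unity in characteristic $p$; writing $g_i=hs$ gives $\chi(g_i)=1$. Suppose instead we are in alternative (2). Any Sylow $p$-subgroup of $H_i$ embeds into some Sylow $p$-subgroup of $G$, and restrictions of projective modules over a $p$-group remain projective, so $M_{\downarrow H_i}$ has trivial Sylow restriction as a $kH_i$-module; hypothesis (2)(a) then yields $M_{\downarrow H_i}\cong\chi_i\oplus P_i$ with $\chi_i$ a one-dimensional character of $H_i$. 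Restricting both $\chi$ and $\chi_i$ down to the common subgroup $H\cap H_i$, they appear as one-dimensional direct summands of the single module $M_{\downarrow H\cap H_i}$. By hypothesis (2)(b) we have $p\mid|H\cap H_i|$, which forces these one-dimensional summands to be non-projective, and hence Krull--Schmidt gives
\[
\chi_{\downarrow H\cap H_i}\;\cong\;(\chi_i)_{\downarrow H\cap H_i}.
\]
Evaluating at $g_i\in H\cap H_i$ then yields $\chi(g_i)=\chi_i(g_i)$, and condition (2)(c) combined with $\chi_i$ being a character forces $\chi_i(g_i)=1$.

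The step I expect to be the main obstacle is the Krull--Schmidt identification in case (2): one must be certain that the ``one-dimensional shadow'' of $M$ seen from $H$ and the one seen from $H_i$ genuinely coincide on $H\cap H_i$. Two ingredients make this rigorous and both deserve to be isolated in the writeup: first, that trivial Sylow restriction of $M$ over $G$ transfers cleanly to $M_{\downarrow H_i}$ over $H_i$ even when a Sylow of $H_i$ is strictly smaller than a Sylow of $G$, so that (2)(a) actually applies; and second, that hypothesis (2)(b) prevents a one-dimensional $k(H\cap H_i)$-module from being projective, upgrading agreement in the stable category to an actual isomorphism of characters on which pointwise evaluation at $g_i$ is meaningful.
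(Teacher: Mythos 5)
Your proof is correct and follows essentially the same route as the paper's: restrict $M$ to $H$ to obtain a one-dimensional $\chi$ via (A), kill generators of type (1) using $[H,H]$ and $S$, handle type (2) by comparing $\chi$ with the one-dimensional module over $H_i$ on $H\cap H_i$ via Krull--Schmidt (where (2)(b) rules out projectivity of one-dimensional summands), and finish by Green correspondence. Your explicit justification that $M_{\downarrow H_i}$ again has trivial Sylow restriction is a detail the paper leaves implicit, but it is not a different argument.
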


\begin{proof}
Suppose that $M$ is a $kG$-module with trivial Sylow restriction.
Then $M_{\downarrow H} \cong \chi \oplus \proj$ for some $kH$-module
$\chi$ having dimension one. So $[H,H]$ and $S$ 
are in the kernel of $\chi$ and any generator $g_i$ of $H$ that 
satisfies condition (1) must act trivially on $\chi$. Our next objective
is to prove that the same holds for any generator $g_i$ of $H$
satisfying condition (2).  

Suppose that $g_i$ satisfies condition (2) for some subgroup $H_i$ of
$G$. By (2)(b), we can pick a nontrivial $p$-subgroup 
$Q_i \subseteq H_i\cap H$ for each $i$. By condition (2)(a),
$M_{\downarrow H_i} \cong \mu \oplus \proj$
for some one-dimensional $kH_{i}$-module $\mu$. Since $g_i$
is in $[H_i,H_i]$ by (2)(c), $g_i$ acts trivially on $\mu$. 
As $p$ divides the order of $H_i \cap H$ by (2)(b), any projective
$k(H_i \cap H)$-module has dimension divisible by $p$. So consider the
restriction 
\[
M_{\downarrow (H_i \cap H)}  \cong 
\chi_{\downarrow (H_i \cap H)} \oplus \proj 
\cong  \mu_{\downarrow (H_i \cap H)} \oplus \proj.
\]
By the Krull-Schmidt Theorem 
$\mu_{\downarrow (H_i \cap H)}\cong \chi_{\downarrow (H_i \cap H)}$, and
hence $g_i$ acts trivially on $\chi$. 

Since every generator of $H$ acts trivially on $\chi$, 
it follows that $\chi \cong k_H$, the trivial $kH$-module. 
Now, $M$ is 
indecomposable and $H$ contains the normalizer of $S$. So
$M$ must be the Green correspondent of $k_H$. 
That is, $M \cong k$, as asserted. 
\end{proof}

\begin{rem}
In most of the applications of Theorem \ref{thm:method} in this paper,
the group $G$ is a special linear group and the subgroup $H$ is a
parabolic or Levi subgroup that contains the normalizer of a Sylow
$p$-subgroup of $G$. 
For such subgroups, condition (A) in the hypothesis of the theorem is
established using an argument similar to that of Theorem
\ref{thm:directprod}.

In the case that $H = N_G(Q)$ where 
$Q$ is a nontrivial characteristic subgroup of the Sylow $p$-subgroup
$S$ of $G$, the hypotheses of Theorem~\ref{thm:method} basically say
that $\rho^2(Q)= N_G(Q)$, which guarantees that the trivial $kG$-module
is the only indecomposable $kG$-module with trivial Sylow restriction.
In all but one of the proofs of Sections 
\ref{sec:SLncase} and \ref{sec:thm11},  this
information is sufficient to obtain the asserted result. There is a
unique case for which we need to compute $\rho^3(Q)$, relying on
information gathered in \cite{CMN3}.
\end{rem}

\begin{rem}
It should be pointed out that conditions (B)(1) and (2) on the generators
of $H$ in the hypothesis of the theorem are not inherited by
subgroups. 
That is, if $J$ is subgroup of $H$ also containing the normalizer of a 
Sylow $p$-subgroup of $G$, and $H$ satisfies condition (B)(1) or (2),
then we cannot conclude that $J$ satisfies condition (B)(1) or (2)
respectively.
\end{rem}


\section{Groups of Lie Type A}\label{sec:group-structure} 
In this section we recall some known facts on the structure of the Sylow
$p$-subgroups and their normalizers for finite groups of Lie type A in
nondefining characteristic. More information can be found in
\cite{AM,AF,GLS,W}. 

For convenience we set some notation that is used
throughout the rest of paper.

\begin{notation} \label{note:basic}
Let $k$ be a field of prime characteristic $p$ and $q$ a prime power
such that $\gcd(p,q)=1$. Let $e$
denote the least integer such that $p$ divides $q^e-1$ and write
$q^e-1=p^td$, where $\gcd(p,d)=1$ and $t\geq1$. 
Given a positive integer $n$, let
$r,f$ be integers such that $n = re+f$ and $0 \leq f < e$. 
\end{notation}

Thus, $e$ is the multiplicative order of $q$ modulo $p$, and 
$p^t$ is the highest power of $p$ dividing $q^e-1$. 
In particular, $e$ is the smallest integer such that $p$ divides the
order of $\GL(e,q)$. 

We start with the following useful elementary observations.

\begin{prop} \label{prop:sylow1}
Suppose that $G$ is a group such that 
$\SL(n,q) \subseteq G \subseteq \GL(n,q)$ and let $S$ be a Sylow
$p$-subgroup of $G$. 
Let $\Det(G) \subseteq \bfq^\times$ be
the image of the determinant map. 
\begin{itemize}
\item[(a)] $G$ is the subgroup of $\GL(n,q)$ consisting of all
invertible matrices whose determinants are in $\Det(G)$. 
\item[(b)] $S$ is abelian if and only if and only if $n < pe$.
\item[(c)] The $p$-rank of $G$ is $r$ except in the case that 
$p$ divides both $n$ and $q-1$.  In that case, the $p$-rank is 
either $r$ or $r-1$, depending on whether the order of 
$\Det(G)$ is divisible by $p$. 
\end{itemize}
\end{prop}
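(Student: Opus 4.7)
Part (a) is essentially immediate: since $\SL(n,q)$ is the kernel of $\det\colon\GL(n,q)\to\bfq^\times$ and $\SL(n,q)\subseteq G$, every coset of $\SL(n,q)$ in $\GL(n,q)$ either lies entirely in $G$ or is disjoint from $G$, so $G=\det^{-1}(\Det(G))$. Parts (b) and (c) require more work and rest on the known structure of Sylow $p$-subgroups of $\GL(n,q)$ in nondefining characteristic (as recorded in \cite{AM,AF,GLS,W}).

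For part (b), the plan is to decompose $\bfq^n$ into $r$ summands of dimension $e$ and one of dimension $f$. On each $e$-summand the Singer cycle $\bF_{q^e}^\times\hookrightarrow\GL(e,q)$ contributes a cyclic Sylow $p$-subgroup $C_{p^t}$, while $\GL(f,q)$ has order coprime to $p$ by minimality of $e$. The normalizer of the associated diagonal torus adjoins permutations of the $r$ blocks, and a Sylow $p$-subgroup of $\GL(n,q)$ has (up to a mild modification when $p=2$ and $q\equiv -1\pmod 4$) the wreath-product form $C_{p^t}\wr P_r$, with $P_r$ a Sylow $p$-subgroup of $S_r$. This group is abelian exactly when $P_r$ is trivial, i.e.\ when $r<p$, and since $0\le f<e$ this is equivalent to $n=re+f<pe$. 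The Sylow of any intermediate $G$ shares the same qualitative structure---its base is cut down by a determinant condition, but the permutation part is preserved---so the same criterion governs abelianness.

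For part (c), I would first produce a canonical elementary abelian subgroup $A\cong C_p^r$ by taking the order-$p$ element of each cyclic factor in the base $C_{p^t}^r$. A generator $\alpha_i$ acts on the $i$-th $e$-block as multiplication by an order-$p$ element of $\bF_{q^e}^\times$ and trivially elsewhere, so its determinant equals $\alpha_i^{(q^e-1)/(q-1)}$ by the norm formula $N_{\bF_{q^e}/\bF_q}$. When $e>1$, we have $p\nmid q-1$ while $p^t\mid(q^e-1)/(q-1)$, so the determinants are trivial and $A\subseteq\SL(n,q)\subseteq G$, giving $p$-rank at least $r$. When $e=1$, each determinant has order exactly $p$, the image of $A$ under $\det$ is the full $p$-torsion subgroup of $\bfq^\times$, and $A\cap G$ has rank $r$ or $r-1$ according as $p$ divides $|\Det(G)|$ or not; a further case distinction on whether $p\mid n$ (and thus whether scalar matrices of order $p$ can be used to enlarge $A\cap G$) accounts for the exceptional case recorded in the statement.

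The matching upper bound will be the main obstacle. Inside the wreath product $C_{p^t}\wr P_r$, an element with nontrivial $P_r$-component centralizes in the base only those tuples that are constant on each of its orbits, so every elementary abelian subgroup of the wreath product has rank at most $r$. Since the Sylow of $G$ embeds in this wreath product (with a small adjustment for $p=2$), every elementary abelian $p$-subgroup of $G$ has rank bounded by $r$, which together with the lower bound from $A\cap G$ yields the asserted value.
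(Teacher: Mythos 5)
Your part (a) matches the paper, which simply calls it immediate; for (b) and (c) the paper gives no argument at all, citing \cite{GLS} and \cite{W}, so your sketch is a reconstruction rather than a parallel of the printed proof. Parts (a) and (b) are essentially sound, with one small caveat in (b): for $p=2$ the block-swap permutation matrices have determinant $-1$, so ``the permutation part is preserved'' fails for $G=\SL(n,q)$; nonabelianness for $n\geq 2$ should instead be drawn from the generalized quaternion Sylow $2$-subgroup of $\SL(2,q)\subseteq G$.

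In (c) there is a genuine gap. The norm computation, the case $e>1$, and the statement that $A\cap G$ has rank $r$ or $r-1$ according as $p$ divides $|\Det(G)|$ or not (when $e=1$) are all correct, but two things are missing. First, your wreath-product argument only bounds every elementary abelian subgroup by rank $r$; in the exceptional case you must rule out rank $r$ inside $G$, which requires knowing that every rank-$r$ elementary abelian subgroup of $\GL(n,q)$ is conjugate to $A$ (Theorem~\ref{thm:p-local}(d) for $p$ odd, simultaneous diagonalization of commuting involutions for $p=2$), so that its determinant image is the full subgroup of order $p$ of $\bfq^\times$ and it cannot lie in $G$ when $p\nmid|\Det(G)|$. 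Second, the proposed ``further case distinction on whether $p\mid n$, using scalar matrices to enlarge $A\cap G$'' does not work: a scalar $\zeta I_n$ with $\zeta$ of order $p$ has determinant $\zeta^n$, so it lies in $G$ for every admissible $G$ only when $p\mid n$, and in that case it already belongs to $A\cap\SL(n,q)$; when $p\nmid n$ it lies in $G$ only if $p\mid|\Det(G)|$, where no enlargement is needed. Hence scalars never change the rank of $A\cap G$, and your determinant analysis, carried through correctly, makes the dichotomy depend only on whether $p\mid q-1$ and $p\mid|\Det(G)|$ --- for instance it yields rank $r-1$ for $\SL(2,7)$ with $p=3$, where $p\nmid n$ --- so the plan of ``accounting for'' the $p\mid n$ clause of the statement along these lines cannot be completed as described.
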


\begin{proof}
(a) is immediate.
For (b) and (c), see \cite{GLS} or \cite{W}.
\end{proof} 

In general, a Sylow $p$-subgroup $S$ of $G$ is a subgroup of a direct
product of iterated wreath products.  
For $G= \GL(n,q)$, a Sylow $p$-subgroup $S$ of $G$ is the Sylow $p$-subgroup
of a semi-direct product $(C_{p^t}\rtimes C_e)^r \rtimes \fS_r$, where $\fS_r$ is
the symmetric group on $r$-letters (\cite[Theorem~VII.4.1]{AM}). 
For any $\SL(n,q)\subseteq G \subseteq \GL(n,q)$, a Sylow $p$-subgroup of $G$ 
is the intersection of $G$ with a Sylow $p$-subgroup of $\GL(n,q)$. 
Recall that a Sylow $p$-subgroup $R$ of $\fS_r$ is a direct product of
iterated wreath products as follows. Write $r=\sum_{0\leq i\leq M}a_ip^i$ with
$0\leq a_i<p$ for each $i$. Then 
$$R\cong\prod_{0\leq i\leq M}\big(\underbrace{C_{p}\wr C_p\wr\dots\wr
  C_p}_{i\;\hbox{\scriptsize{terms}}}\big)^{a_i}=\prod_{0\leq i\leq M}\big(C_p^{\wr
  i}\big)^{a_i}$$
where $C_p^{\wr i}$ is a Sylow $p$-subgroup of $\fS_{p^i}$.

\begin{thm}\label{thm:p-local}
Suppose that $p>2$. With the above notation, the following hold. 
\begin{itemize}
\item[(a)]
$S\cong\prod_{0\leq i\leq M}\big(C_{p^t}\wr(C_p^{\wr i})\big)^{a_i}$
\item[(b)] Each of the $r$ factors $C_{p^t}$ of $S$ can be embedded as Sylow
$p$-subgroup of a diagonal block $\GL(e,q)$ of $\GL(n,q)$, and the other
  generators of $S$ can be embedded as permutation matrices of these
  blocks according to the $p$-adic expansion of $r$. 
In other words, $S$ can be chosen in a
Levi subgroup of $\GL(n,q)$ with diagonal blocks of size
$$(\underbrace{e,\dots,e}_{a_0\;\hbox{\scriptsize{terms}}},
\underbrace{ep,\dots,ep}_{a_1\;\hbox{\scriptsize{terms}}},\dots,
\underbrace{ep^M,\dots,ep^M}_{a_M\;\hbox{\scriptsize{terms}}},
\underbrace{1,\dots,1}_{f\;\hbox{\scriptsize{terms}}})$$
\item[(c)] The normalizer $N_{\GL(n,q)}(S)$ of $S$ is contained in the
  normalizer of the Levi subgroup containing $S$ above. 
\item[(d)] $S$ contains a unique elementary abelian subgroup $E$ of
rank $r$, hence characteristic in $S$, and each elementary abelian
subgroup of $S$ is conjugate to a subgroup of $E$. 
\end{itemize}
\end{thm}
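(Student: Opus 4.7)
The theorem collects the classical structure of Sylow $p$-subgroups of $\GL(n,q)$ in nondefining characteristic, and my plan is to construct $S$ inductively as a wreath product, read off (a) and (b) directly from the construction, and deduce (c) and (d) from standard properties of the resulting wreath product, appealing to \cite{AM,GLS,W} for routine verifications.

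For (a) and (b), I would start from the observation that $\GL(e,q)$ has a cyclic Sylow $p$-subgroup of order $p^t$, realized by the $p$-part of a Singer cycle $\bF_{q^e}^{\times}\hookrightarrow\GL(e,q)$. Since $p\nmid|\GL(f,q)|$ for $0\leq f<e$, we have $|\GL(n,q)|_p=|\GL(re,q)|_p$, so it suffices to produce a Sylow $p$-subgroup of $\GL(re,q)$ of the claimed shape. Embedding $\GL(e,q)^r$ as the standard block-diagonal Levi of $\GL(re,q)$ and adjoining the $r\times r$ block-permutation matrices yields a subgroup isomorphic to $\GL(e,q)\wr\fS_r$, whose $p$-part has the correct order $(p^t)^r\cdot|\fS_r|_p$. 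A Sylow $p$-subgroup of this wreath product is $(C_{p^t})^r\rtimes R$ for $R$ a Sylow $p$-subgroup of $\fS_r$, and expanding $R$ according to the $p$-adic expansion $r=\sum a_ip^i$ yields the description in (a). Part (b) is then read off: each factor $C_{p^t}\wr C_p^{\wr i}$ fits inside the single block $\GL(ep^i,q)$ obtained by amalgamating $p^i$ of the $\GL(e,q)$ subblocks permuted by its $C_p^{\wr i}$ factor, while the trailing $f$ singleton blocks account for the $\GL(f,q)$ factor, which has no $p$-torsion.

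For (c), the plan is to characterize the Levi $L$ of (b) intrinsically from $S$: namely, $L$ is the $\GL(n,q)$-centralizer of a suitable characteristic subgroup of $S$, which one can take to be the ``top-level'' diagonal copies of $C_{p^t}$ inside each wreath tower. Since $N_{\GL(n,q)}(S)$ automatically normalizes every characteristic subgroup of $S$, it must normalize $L$. For (d), the candidate is $E=\Omega_1(T)$ where $T=(C_{p^t})^r$ is the base; $T$ is characteristic in $S$ by standard wreath-product facts (it is the unique abelian normal subgroup of $S$ of its order), hence so is $E$, which is elementary abelian of rank $r$. The conjugacy statement that every elementary abelian subgroup of $S$ is conjugate to a subgroup of $E$ follows by induction on the wreath-tower height: in $A\wr C_p^{\wr i}$ with $A$ cyclic, any element $(b,\sigma)$ of order $p$ with $\sigma\neq 1$ can be conjugated by a base element (a Hilbert~90 style coboundary computation) into a strictly smaller-height subsystem, so iterating reduces any elementary abelian subgroup into $T$, where it is automatically contained in $E$. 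Uniqueness of $E$ then follows, for any other rank-$r$ elementary abelian subgroup, being conjugate into $E$, must coincide with it by characteristicness.

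The subtlest step I anticipate is the intrinsic characterization of $L$ in (c). A naive choice such as $Z(S)$ or the full base $T$ typically yields a centralizer that is strictly smaller than $L$ (for instance the centralizer of $T$ in $\GL(n,q)$ is a split torus of type $\prod\bF_{q^e}^{\times}\times(\bF_q^{\times})^f$, far smaller than the Levi with blocks of sizes $ep^i$), so one must pick the correct characteristic subgroup of $S$ whose $\GL(n,q)$-centralizer is exactly the prescribed Levi. Once this has been done, (c) follows formally, and the conjugacy part of (d) becomes a clean induction on the wreath-product structure.
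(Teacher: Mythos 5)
The paper offers no argument for this theorem at all (its ``proof'' is a citation of Alperin--Fong, Adem--Milgram, GLS and Weir), so you are supplying a proof from scratch; your construction for (a) and (b) is the standard one and is fine. The genuine problems are in (c) and (d). For (c), your plan is to exhibit the Levi $L$ as $C_{\GL(n,q)}(X)$ for a suitable characteristic subgroup $X\le S$, but no such $X$ exists once $e>1$: if $C_{\GL(n,q)}(X)=L$ then $X\subseteq C_{\GL(n,q)}(L)$, and the centralizer of a block-diagonal Levi is the group of block-scalar matrices, isomorphic to $(\bfq^\times)^m$ with $m$ the number of blocks, which is a $p'$-group when $p\nmid q-1$; hence $X=\{1\}$ and $C_{\GL(n,q)}(X)=\GL(n,q)\neq L$ except in the one-block case. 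You correctly observe that $Z(S)$ and the base $T$ give centralizers smaller than $L$ (e.g.\ $C_{\GL(n,q)}(Z(S))$ is a product of groups $\GL(p^i,q^e)$), but the ``correct characteristic subgroup'' you then postulate simply does not exist, so this step is a gap and not a routine verification. A workable route is to recover the block decomposition from the $kS$-module structure of the natural module $V$ (the fixed space $C_V(S)$ and the supports of the directly indecomposable direct factors of $S$ are permuted by $N_{\GL(n,q)}(S)$), or to quote Weir/Alperin--Fong as the paper does; note also that for $f\ge 2$ the $f$ trailing coordinates must be treated as a single block, since an element of $C_{\GL(n,q)}(S)$ may act arbitrarily on $C_V(S)$ and certainly need not normalize a product of $f$ copies of $\GL(1,q)$.

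For (d) there are two further gaps. First, the ``Hilbert~90 coboundary'' conjugation happens inside the wreath product and can only alter the base coordinate of an element $(b,\sigma)$; it never changes $\sigma$, whose image in the top permutation quotient is invariant under conjugation in $S$. So no conjugation inside $S$ (or inside $C_{p^t}\wr C_p^{\wr i}$) moves an element with $\sigma\neq1$ into the base, and your induction cannot start. The conjugacy asserted in (d) is conjugacy in $\GL(n,q)$, and the standard argument uses that $p\nmid q$: an elementary abelian $p$-subgroup acts semisimply on $V$, so after extending scalars to $\bF_{q^e}$ one decomposes $V$ into weight spaces and conjugates the subgroup into the torus of $e\times e$ blocks, whose $p$-torsion meets $S$ in $E$. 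Second, your uniqueness argument is circular: from ``$A$ is $\GL(n,q)$-conjugate into $E$'' and ``$E$ is characteristic in $S$'' you cannot conclude $A=E$, because the conjugating element need not normalize $S$. The correct statement is proved inside $S$: if $A\le S$ is elementary abelian and contains some $x\notin T$, then $A\le C_S(x)$ and a direct computation in the wreath product shows the $p$-rank of $C_S(x)$ is strictly less than $r$ when $p>2$; hence every rank-$r$ elementary abelian subgroup lies in $T$ and therefore equals $E=\Omega_1(T)$. (This is also where the hypothesis $p>2$ enters, which your sketch never uses.)
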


\begin{proof}
See \cite[Section 4]{AF}, \cite[Section VII]{AM}, \cite[Theorem~4.10.2
  and Remark 4.10.4]{GLS} and \cite[Section 2]{W}.
\end{proof}

The case $p=2$ is handled separately, as the $2$-local structure of
$\GL(n,q)$ and subgroups is very different from the case $p>2$.
 
The lemma below is well-known. We sketch a proof of the lemma 
because it will be used it several times.

\begin{lemma} \label{lem:Levicom}
Suppose that $n = rs$ for positive integers $r$ and $s$, with 
$r > 1$. In $\whg = \GL(n,q)$ let $\whl \cong \GL(s,q)^r$ be 
the Levi subgroup of all elements that can be written as 
block diagonal $s\times s$ matrices. Let $L = \whl \cap G$
where $G = \SL(n,q)$, and let $N = N_G(L)$.

\begin{itemize}
\item[(a)] If $q$ is odd and $r = 2$, then
the quotient $N/[N,N]$ is a Klein four group.
\item[(b)] If $q$ is odd and $r > 2$, or if $q$ is even, then the 
commutator subgroup of $N$ has index $2$ in $N$. 
\end{itemize}
\end{lemma}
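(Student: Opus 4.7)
I will realize $N$ inside the ambient wreath structure $\widehat N := N_{\widehat G}(\widehat L)$. Because a block--permutation matrix normalizes $\widehat L$ (and these are the only non-inner normalizers), $\widehat N = \widehat L \rtimes \fS_r$, where $\fS_r$ is embedded as the group of block--permutation matrices permuting the $r$ diagonal $s\times s$ blocks. Setting $N = \widehat N \cap G$, the first step of the plan is to verify that the natural map $N \to \fS_r$ is \emph{surjective} with kernel $L$: any block permutation matrix has determinant $\pm 1$, and since $\widehat L$ contains elements of every determinant, we can always adjust so the product lies in $\SL(n,q)$. Thus there is a short exact sequence $1 \to L \to N \to \fS_r \to 1$.

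Next I compute $L^{ab}$. In the generic case $s \geq 2$ (with $SL(s,q)$ perfect, the exceptions $(s,q)\in\{(2,2),(2,3)\}$ handled separately), $[\GL(s,q),\GL(s,q)] = \SL(s,q)$, hence $[L,L] = \SL(s,q)^r$. So $L^{ab}$ is the kernel of the product map $(\bF_q^\times)^r \to \bF_q^\times$, i.e.\ the augmentation submodule $M_0$ of the permutation $\fS_r$-module $M = (\bF_q^\times)^r$. Its $\fS_r$-coinvariants are easily determined from the generators $e_i - e_j$: for $r \geq 3$ one has $[e_1-e_2]=[e_2-e_3]$ so $[e_1-e_3]=2[e_1-e_2]$, while also $[e_1-e_3]=[e_1-e_2]$, forcing $(M_0)_{\fS_r}=0$; for $r = 2$ one gets $(M_0)_{\fS_2} \cong \bF_q^\times/(\bF_q^\times)^2 \cong \bZ/\gcd(2,q-1)$.

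With this in hand I apply the Hochschild--Serre five-term exact sequence
\[
H_2(\fS_r;\bZ) \longrightarrow (L^{ab})_{\fS_r} \longrightarrow N^{ab} \longrightarrow \fS_r^{ab} \longrightarrow 0,
\]
and I combine it with the surjection $N \to \bZ/2$ given by the sign of the underlying block permutation, which witnesses $|N^{ab}|\geq |\fS_r^{ab}|=2$. In case (b), either $(L^{ab})_{\fS_r}=0$ (if $r\geq 3$) or $q$ is even (and the coinvariants again vanish), so $N^{ab}\cong \bZ/2$, proving the claim. In case (a), $(L^{ab})_{\fS_2}\cong\bZ/2$ and, since $H_2(\fS_2;\bZ)=0$, the sequence is left exact, giving $|N^{ab}| = 4$.

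The main obstacle is the remaining step of case (a): ruling out $N^{ab}\cong \bZ/4$. The plan is to exhibit an explicit splitting by producing an element $w\in N$ mapping to the non-trivial class of $N/L$ with $w^2\in [N,N]$, thereby yielding a second order-$2$ generator of $N^{ab}$ independent of the generator coming from $(L^{ab})_{\fS_2}$. When $s$ is even the block-swap permutation matrix itself has determinant $1$ and squares to the identity. When $s$ is odd one multiplies it by an element $D\in\widehat L$ of determinant $-1$; then $w^2 = D\cdot D^w \in L$, and the key technical task is to choose $D$ so that $w^2$ lies not merely in $L$ but in $[N,N]$, exploiting extra commutators in $[N,N]\cap L$ that come from conjugation by the block-swap. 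This parity-sensitive verification is the crux of the argument and the only place where the proof becomes genuinely delicate.
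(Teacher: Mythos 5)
Your skeleton is essentially the paper's: the paper also uses the extension $1\to L\to N\to\fS_r\to 1$, passes to $N/[L,L]$ with $L/[L,L]\cong\{(a_1,\dots,a_r):\prod a_i=1\}$, and for $r>2$ kills $L/[L,L]$ by an explicit commutator (your coinvariant computation is the same calculation in different clothing), while for $r=2$ it identifies $N/[L,L]$ as a group of order $2(q-1)$ and reads off the abelianization. Your case (b) and your proof that $|N^{ab}|=4$ in case (a) are correct (modulo the deferred exceptions $(s,q)=(2,2),(2,3)$, which you flag but never actually treat). The genuine gap is the final step of case (a), which you yourself label the crux and only describe as a plan: for $s$ odd you never produce an element $w$ covering the block swap with $w^2\in[N,N]$ --- and in fact no such element exists when $q\equiv 3\pmod 4$. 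Any $w\in N$ covering the swap has the form $\begin{bmatrix}0&B\\C&0\end{bmatrix}$ with $(-1)^s\det B\det C=1$, so $w^2=\begin{bmatrix}BC&0\\0&CB\end{bmatrix}$ maps, under the first-block determinant identification $L/[L,L]\cong\bfq^\times$, to $(-1)^s$, independently of the choice of $B,C,D$. On the other hand the image of $[N,N]$ in $\bfq^\times$ is exactly $(\bfq^\times)^2$, since modulo $[L,L]$ all commutators are generated by $[w,\ell]\equiv\ell^{-2}$. Hence for $s$ odd one has $w^2\in[N,N]$ if and only if $-1$ is a square in $\bfq^\times$, i.e. $q\equiv1\pmod4$; when $q\equiv3\pmod4$ the extension $0\to\bZ/2\to N^{ab}\to\bZ/2\to0$ does not split and $N^{ab}\cong\bZ/4$. (Smallest instance: $s=1$, $r=2$, $q=7$, where $N$ is the dicyclic torus normalizer of order $12$ in $\SL(2,7)$, with abelianization $\bZ/4$.)

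So the ``delicate'' step is not merely unfinished; it cannot be carried out in general, and in locating it you have actually put your finger on a defect in the statement and in the paper's own argument: for $r=2$ the quotient $N/[L,L]$ is dihedral only when $s$ is even (a lift of the swap then squares into $[L,L]$); for $s$ odd it is dicyclic, and part (a) as stated fails when in addition $q\equiv3\pmod4$, where $N/[N,N]\cong\bZ/4$. This does not harm the later applications (in Proposition~\ref{prop:composite} the element $\sigma$ covering the swap generates $N^{ab}$ also in the cyclic case, so $N=\langle[N,N],\sigma,\tau\rangle$ still holds), but as a proof of the lemma as stated your proposal is incomplete, and the missing step would fail rather than succeed in the problematic subcase.
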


\begin{proof}
The subgroup $N$ is an extension
\[
\xymatrix{
1 \ar[r] & L \ar[r] & N \ar[r] & \fS_r \ar[r] & 1
}
\]
where the symmetric group $\fS_r$ acts on $L$ by permuting
the diagonal blocks. We know that $[L,L] \cong \SL(s,q)^r$
and can identify $L/[L,L]$ with the subgroup
of $(\bfq^\times)^r$ given as $L/[L,L] \cong \{(a_1, \dots, a_r)
\in (\bfq^\times)^r \ \vert \ a_1 \cdots a_r = 1\}$. Thus,
$N/[L,L]$ is an extension
\[
\xymatrix{
1 \ar[r] & L/[L,L] \ar[r] & N/[L,L] \ar[r] & \fS_r \ar[r] & 1,
}
\]
where the symmetric group acts by permuting the places.

If $r = 2$, then $N/[L,L]$ is a dihedral group of order $2(q-1)$ whose 
commutator subgroup is cyclic of index $4$ if $q$ is odd, and of index
$2$ if $q$ is even. Therefore, the quotient group
$N/[N,N]$ is a Klein four group if $q$ is odd, respectively cyclic of
order $2$ if $q$ is even.

Now assume that $r > 2$. It is easy to see
that $L/[L,L]$ is generated by the element $\alpha = (a, a^{-1},
1,\dots, 1)$ and its conjugates under the action of the symmetric group,
where $a$ is a generator for $\bfq^\times$. 
One of these conjugates is $\beta = (1, a, a^{-1}, 1, \dots, 1)$. For
$\sigma = (1,2) \in \fS_r$ we calculate
$[\alpha\beta,\sigma]=\alpha\beta\sigma(\alpha\beta)^{-1}\sigma^{-1} =
\alpha$. Hence, $\alpha$ and all of its conjugates under the action of the 
symmetric group are in the commutator subgroup of $N/[L,L]$ 
and hence $L \subseteq [N,N]$. On the other hand, the quotient group
$N/L$ is isomorphic to $\fS_r$ and as $N/[N,N]$ is the largest abelian
quotient of $N/L$, and $N/[N,N]$ must have order $2$.
\end{proof}

In the specific context of the section, Theorem~\ref{thm:directprod}
leads to the following observation.

\begin{prop}\label{prop:directprod}
Assume that Notation~\ref{note:basic} holds.
Let $n = n_1 + n_2 + \dots + n_m$, where $n_1, \dots, n_m$ 
are positive integers and let 
\[ 
\whl \quad = \quad \prod_{i = 1}^m \GL(n_i, q) \quad \subseteq
\quad \GL(n,q)
\]
be the Levi subgroup of diagonal blocks of sizes $n_1, \dots,
n_m$. Let $L = \SL(n,q) \cap \whl$. Assume further that 
\begin{itemize}
\item[(a)] if $p$ divides $q-1$ then at least two of $n_1, \dots, n_m$
are greater than one,
\item[(b)]  if $e$ is the smallest positive integer such that $p$
divides $q^e-1$ and $e >1$, then at least two of $n_1, \dots,
n_m$ are greater than or equal to $e$. 
\end{itemize}
Then any indecomposable $kL$-module with trivial Sylow restriction
has dimension one. 
\end{prop}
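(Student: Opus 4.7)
The plan is to reduce to a direct application of Theorem~\ref{thm:directprod}, with $L$ playing the role of ``$G$'' in that theorem. Since $L = \SL(n,q) \cap \whl$ is the kernel (restricted to $\whl$) of the product-of-determinants map $\whl \to \bfq^\times$, the quotient $\whl/L$ embeds in $\bfq^\times$ and so is cyclic, in particular abelian.

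First I would split the index set $\{1,\ldots,m\}$ into two disjoint nonempty subsets $I$ and $I'$ and set
\[
H \,=\, \prod_{i\in I} \GL(n_i,q), \qquad J \,=\, \prod_{j\in I'} \GL(n_j,q),
\]
so that $\whl = H \times J$ and $L \trianglelefteq H\times J$ with abelian factor. The subgroup $L\cap H$ visibly contains the diagonally embedded copy of $\prod_{i\in I} \SL(n_i,q)$, since an element sitting in a single $\SL$-block has determinant one, and symmetrically for $L\cap J$.

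Next I would use the hypotheses to choose the partition so that each of $|L\cap H|$ and $|L\cap J|$ is divisible by $p$. The underlying arithmetic is elementary: $p \mid q^j-1$ iff $e \mid j$, and combined with $|\SL(n,q)| = |\GL(n,q)|/(q-1)$ this gives $p\mid |\SL(n_i,q)|$ iff $n_i \ge e$ when $e>1$, and iff $n_i\ge 2$ when $e=1$. Under hypothesis (a) I can pick two indices $i_1\neq i_2$ with $n_{i_1},n_{i_2}\ge 2$; under hypothesis (b) I can pick two with $n_{i_1},n_{i_2}\ge e$. Putting $i_1\in I$ and $i_2\in I'$ and distributing the remaining indices arbitrarily forces $p$ to divide both $|L\cap H|$ and $|L\cap J|$.

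With all the hypotheses of Theorem~\ref{thm:directprod} verified, the conclusion that every indecomposable $kL$-module with trivial Sylow restriction has dimension one is immediate. I do not anticipate any real obstacle: the argument is essentially bookkeeping, and the only point requiring mild care is the elementary arithmetic classifying when $p\mid |\SL(n_i,q)|$ in terms of the order $e$ of $q$ modulo $p$, which is exactly what hypotheses (a) and (b) are tailored to guarantee.
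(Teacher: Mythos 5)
Your proposal is correct and follows essentially the same route as the paper: partition the index set so that each part contains a block of size $\ge 2$ (case (a)) or $\ge e$ (case (b)), form $H$ and $J$ as the corresponding products of $\GL(n_i,q)$'s, note $\whl/L$ is abelian, and apply Theorem~\ref{thm:directprod}. The only difference is that you spell out the elementary divisibility argument for $p \mid |L\cap H|$ and $p \mid |L\cap J|$, which the paper leaves implicit.
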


\begin{proof} Express $\{1, \dots, m\} = A \cup B$ 
as a union of disjoint subsets
such that in case (a) each of $A$ and $B$ contains some index $i$
such that $n_i > 1$, or in case (b), each of $A$ and $B$ contains
an index $i$ such that $n_i \geq e$. Then let 
$H = \prod_{i \in A} \GL(n_i,q)$, $J = \prod_{i \in B} \GL(n_i,q)$.
Then $\whl \cong H \times J$, and Theorem \ref{thm:directprod} proves
the assertion for $L = \SL(n,q) \cap \whl$. Indeed, $\whl/L$ is abelian
and the conditions (a) and (b) ensure that the orders of 
$H \cap L$ and $J \cap L$ are both divisible by $p$.
\end{proof}

We end the section by recalling the following result (cf.  
\cite[Theorem~3.4]{CMN3}).  

\begin{thm} \label{thm:torfree}
Let $G$ be a group such that $\SL(n,q) \subseteq G
\subseteq\GL(n,q)$. Suppose that a Sylow $p$-subgroup of
$G$ has $p$-rank at least $2$. Then $TF(G) \cong \bZ$.
\end{thm}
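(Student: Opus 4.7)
The plan is to reduce the statement to a counting problem via Theorem~\ref{thm:rank}, and then exploit the sharp structural information given in Theorem~\ref{thm:p-local}(d). Let $S$ be a Sylow $p$-subgroup of $G$ and write $m$ for the $p$-rank of $G$. By hypothesis $m\geq 2$. Theorem~\ref{thm:rank} asserts that
\[
\mathrm{rank}\bigl(TF(G)\bigr)=\begin{cases} n_G & \text{if } m=2,\\ n_G+1 & \text{if } m\geq 3,\end{cases}
\]
so it suffices to show that $n_G=1$ when $m=2$ and $n_G=0$ when $m\geq 3$.

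For $p$ odd, invoke Theorem~\ref{thm:p-local}(d): $S$ contains a unique elementary abelian subgroup $E$ of rank equal to the $p$-rank of $S$, and every elementary abelian subgroup of $S$ is conjugate (inside $S$, hence inside $G$) to a subgroup of $E$. This immediately splits into two cases. If $m\geq 3$, then $E$ has rank $\geq 3$ and any elementary abelian subgroup of order $p^2$ is conjugate to a proper subgroup of $E$, hence is not maximal. Thus $n_G=0$ and $TF(G)\cong\bZ$. If $m=2$, then $E$ itself has order $p^2$ and is maximal, and any other maximal elementary abelian $p$-subgroup of $G$ is $G$-conjugate to a rank-$2$ subgroup of $E$; the only such subgroup is $E$ itself. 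Therefore there is exactly one conjugacy class of maximal elementary abelian $p$-subgroups of order $p^2$, so $n_G=1$ and again $TF(G)\cong\bZ$. A small bookkeeping point to verify is that, in the exceptional situation of Proposition~\ref{prop:sylow1}(c) where the $p$-rank is $r-1$ rather than $r$, the subgroup playing the role of $E$ (obtained by intersecting the full rank-$r$ abelian subgroup of the Sylow of $\GL(n,q)$ with $G$) still controls all elementary abelian subgroups up to conjugacy, so the case analysis goes through unchanged.

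For $p=2$, Theorem~\ref{thm:p-local} does not apply directly and one must appeal to the analogue of the uniqueness statement (d) for the $2$-local structure of $\SL(n,q)\subseteq G\subseteq\GL(n,q)$, as recorded in \cite{AM,GLS,W}. Once one knows that, up to $G$-conjugacy, a single maximal elementary abelian $2$-subgroup of $S$ absorbs every elementary abelian $2$-subgroup, the very same dichotomy as in the odd case yields $n_G=1$ if $m=2$ and $n_G=0$ if $m\geq 3$, and Theorem~\ref{thm:rank} concludes the proof.

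The main obstacle is the $p=2$ case: the Sylow $2$-subgroups and their fusion in $\GL(n,q)$ are genuinely more intricate than in odd characteristic, and while the required uniqueness of maximal elementary abelian subgroups up to conjugacy is available in the literature, extracting it and checking that it is insensitive to the choice of $G$ between $\SL(n,q)$ and $\GL(n,q)$ requires care. By contrast, the odd-$p$ argument is essentially immediate from Theorem~\ref{thm:p-local}(d), and this is why the result is stated as a recollection from \cite{CMN3}.
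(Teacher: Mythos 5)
Your overall strategy is the one the paper intends: this theorem is not reproved here but recalled from \cite[Theorem 3.4]{CMN3}, and the idea disclosed in the proof of Corollary \ref{cor:torfree} is precisely your reduction, namely count $n_G$ via Theorem \ref{thm:rank} and use the fact that every elementary abelian $p$-subgroup is conjugate into the $p$-torsion of a torus. However, as written there are two genuine gaps. First, the parenthetical ``conjugate (inside $S$, hence inside $G$)'' is false, and it is doing real work. The subgroup $E$ of Theorem \ref{thm:p-local}(d) lies in the base of the wreath product, which is normal in $S$; an elementary abelian subgroup generated by a wreathing (permutation-type) element of order $p$ together with a central element of $S$ is therefore never $S$-conjugate into $E$. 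The conjugacy in (d) takes place in the ambient group $\GL(n,q)$, and since $n_G$ in Theorem \ref{thm:rank} counts $G$-conjugacy classes, you must descend to $G$-conjugacy: use that $G\trianglelefteq\GL(n,q)$, so any $\GL(n,q)$-conjugate of an elementary abelian subgroup of $G$ lands in $E\cap G$, whose rank equals the $p$-rank of $G$ (this also settles, rather than merely asserts, your ``bookkeeping point'' about Proposition \ref{prop:sylow1}(c)); and that $\GL(n,q)=G\cdot C$ for a subgroup $C$ centralizing $E$ (take $C$ a cyclic torus of order $q^e-1$ in one diagonal block containing the corresponding component of $E$; its determinants exhaust $\bfq^\times$ by surjectivity of the norm), so that conjugation into $E\cap G$, and hence fusion of the maximal rank-two subgroups, can be achieved by elements of $G$. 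Without this descent, in the rank-two case you only obtain that all maximal rank-two subgroups are fused in $\GL(n,q)$, not that $n_G=1$.

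Second, the case $p=2$ is not actually proved: you defer it to an unspecified ``analogue of (d)'', but the literal analogue is false, since for example $C_{2^t}\wr C_2$ contains maximal Klein four subgroups not lying in the base, so $E$ is not the unique maximal elementary abelian subgroup of $S$ when $p=2$. What is true, and is all you need, is the fusion statement: since $q$ is odd, commuting involutions in $\GL(n,q)$ are simultaneously diagonalizable, so every elementary abelian $2$-subgroup is $\GL(n,q)$-conjugate into the $2$-torsion of the diagonal torus, and then the same counting, together with the descent to $G$-conjugacy above, gives $n_G=1$ when the $2$-rank is $2$ and $n_G=0$ when it is at least $3$. So $p=2$ is not the hard case you suggest, but your proposal contains no argument for it, and supplying one (or a precise citation) is necessary for the proof to be complete.
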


Note that the theorem excludes the groups $\SL(2,q)$ for $p=2$, in which
case a Sylow $2$-subgroup is generalized quaternion. 

\begin{cor} \label{cor:torfree}
Let $G$ be a group such that $\SL(n,q) \subseteq G
\subseteq\GL(n,q)$. Suppose that $Z \subseteq Z(G)$ and 
that $G$, $Z$ satisfy the conditions of Theorem \ref{thm:noncyclic1a}. 
Then $TF(G/Z) \cong \bZ$.
\end{cor}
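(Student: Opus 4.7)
The plan is to reduce this to an application of Theorem~\ref{thm:rank} applied to $\overline{G} := G/Z$. Specifically, it suffices to verify that a Sylow $p$-subgroup $\overline{S}$ of $\overline{G}$ satisfies: (i) the $p$-rank of $\overline{G}$ is at least $2$, and (ii) there is a unique conjugacy class of maximal elementary abelian $p$-subgroups of $\overline{G}$, of rank equal to the $p$-rank. Together these give $n_{\overline{G}} = 1$ when the $p$-rank is $2$ and $n_{\overline{G}} = 0$ when the $p$-rank is at least $3$, so in either case Theorem~\ref{thm:rank} yields $TF(\overline{G}) \cong \bZ$.

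I would set up the analysis as follows. Since $Z \subseteq Z(\GL(n,q)) = \bfq^\times\!\cdot I_n$, the subgroup $Z$ is cyclic of scalar matrices, and hence $Z_p := Z \cap S$ is a central cyclic $p$-subgroup of $S$ with $\overline{S} \cong S/Z_p$. Working case by case on $e$:

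\emph{Case $e > 1$.} Then $p \nmid q-1$, so $|Z(\GL(n,q))| = q-1$ is coprime to $p$, forcing $Z_p = 1$ and $\overline{S} \cong S$. Under condition~(a), $r \geq 2$, and Theorem~\ref{thm:p-local}(d) (with condition~(d) covering $p=2$) furnishes the unique characteristic elementary abelian $E \leq S$ of rank $r$ into which every elementary abelian $p$-subgroup is conjugate. This structure passes identically to $\overline{G}$, giving both (i) and (ii).

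\emph{Case $e = 1$.} Now $Z_p$ may be nontrivial, but it is a cyclic central subgroup of $S$ sitting inside the unique maximal elementary abelian $E$ (for $p$ odd) and generated by at most one element of order $p$ modulo $p$-th powers, so the image of $E$ in $\overline{S}$ has rank at least $r - 1$. Condition~(a) gives $r \geq 2$, and conditions (b), (c), (d) are precisely tailored to exclude the small-$n$ configurations $(n,p) = (2,p{\geq}3)$, $(3,3)$, and the $p=2$ cases with $n \leq 3$, where $Z_p$ could absorb enough of $E$ to drop the $p$-rank of $\overline{G}$ below $2$. For the conjugacy statement, I would lift an elementary abelian subgroup of $\overline{S}$ to an abelian subgroup of $S$ containing $Z_p$ and use Theorem~\ref{thm:p-local}(d) together with the central location of $Z_p$ to conjugate it into $E Z_p / Z_p$ inside $\overline{S}$, showing that there is still a single conjugacy class of maximal elementary abelians.

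The main obstacle is the careful bookkeeping for $e = 1$: one must check that the quotient by $Z_p$ does not create new, non-conjugate maximal elementary abelian subgroups in $\overline{S}$, and that the rank bound survives in each of the boundary configurations. Everything else is a direct translation of Theorem~\ref{thm:p-local}(d) across the quotient map and a rank-formula computation via Theorem~\ref{thm:rank}.
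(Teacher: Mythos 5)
The decisive step in your $e=1$ case is the claim that an elementary abelian subgroup of $\overline{S}=S/Z_p$ can be lifted to an \emph{abelian} subgroup of $S$ containing $Z_p$, which you would then conjugate into $E$ via Theorem~\ref{thm:p-local}(d). That is exactly where the argument breaks: the full preimage in $G$ of an elementary abelian subgroup of $G/Z$ is only nilpotent of class at most two, with commutators landing in the central subgroup $Z$, and it need not be abelian. Images of extraspecial-type subgroups (for instance the subgroup generated by a diagonal matrix with eigenvalues $1,\zeta,\dots,\zeta^{p-1}$ and a cyclic permutation matrix, whose commutator is the scalar $\zeta$) give elementary abelian subgroups of $G/Z$ that are \emph{not} subconjugate to the image of the torus, and they can be maximal. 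Hence your statement (ii) --- a single conjugacy class of maximal elementary abelian $p$-subgroups, all of rank equal to the $p$-rank --- cannot be obtained by lifting, and it is false in closely neighbouring situations: Proposition~\ref{prop:3structure3} shows that $\PGL(3,q)$ has three, and $\PSL(3,q)$ (for $q\equiv1\pmod 9$) four, conjugacy classes of maximal elementary abelian $3$-subgroups. Your reading of condition (c) reflects the same misconception: for $n=p=3$ the $3$-rank of $G/Z$ is still $2$; what fails there is the uniqueness of the class (hence the count $n_{G/Z}$), not the rank bound.

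The paper's proof is organized so that uniqueness is never needed in the delicate case. When $T\cap Z=1$ (with $T$ the $p$-torsion of the block-diagonal torus) it invokes the point of the proof of Theorem~\ref{thm:torfree}: every elementary abelian $p$-subgroup of $G$, hence of $G/Z$, is conjugate into the elementary abelian group $G\cap T$, so the maximal ones form one class and Theorem~\ref{thm:rank} applies. When $T\cap Z\neq 1$, which forces $e=1$, it argues instead that every \emph{maximal} elementary abelian $p$-subgroup of $G/Z$ has rank at least $3$ (using $n\geq 4$, or hypotheses (b) and (c) when $n=2,3$), so that $n_{G/Z}=0$ while the $p$-rank is at least $3$; since Theorem~\ref{thm:rank} counts only the rank-two maximal classes, possibly many classes of larger rank are harmless, and $TF(G/Z)\cong\bZ$ follows. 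You would need to replace your lifting step by an argument of this second kind. A smaller point: Theorem~\ref{thm:p-local} is stated only for $p>2$, so it cannot be cited for $p=2$ as you do in your $e>1$ branch; in fact $p=2$ forces $e=1$ (as $q$ is odd), so the $p=2$ case lands precisely in the branch where your lifting argument is at issue.
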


\begin{proof}
Let $T$ be the subgroup of all elements of order $p$ in 
the torus of diagonal $e \times e$ block matrices in $\GL(n,q)$. 
The point of the proof of Theorem \ref{thm:torfree} is that every 
elementary abelian $p$-subgroup of $G$ is conjugate to a subgroup
of $G \cap T$. From this is follows that if $G$ has maximal elementary
abelian subgroups of rank $2$, then they are all conjugate to a subgroup
of $G\cap T$ and the 
conclusion follows from Theorem \ref{thm:rank}. This is also
true for $G/Z$ if $T \cap Z$ is trivial. 

Consequently, the only remaining 
cases occur when $T \cap Z$ is not trivial. This requires that 
$p$ divide $q-1$ or equivalently that $e=1$. Now $T\cap Z$ is a cyclic
central subgroup of $G$, so that it is still the case that 
every elementary abelian $p$-subgroup is conjugate to one generated
by elements that are the classes modulo $Z$ of diagonal matrices. 
If $n \geq 4$ then every maximal elementary abelian $p$-subgroup 
has rank at least $3$ and again we are done. The same happens if 
$n = 3$ and either $p>3$ or if $n=p=3$ and $Z$ does not contain the Sylow
$p$-subgroup of $Z(G)$. This proves the corollary.
\end{proof}


\section{Endotrivial Modules for $\SL(n,q)$}\label{sec:SLncase}
The aim of this section is to prove Theorem~\ref{thm:noncyclic1a}
in the case that $G = \SL(n,q)$ and that $Z$ is trivial. 
Throughout this section  we assume Notation~\ref{note:basic}. Thus, 
$q^e-1=p^td$ where $d,e$ and $t$ are positive integers
such that $p$ does not divide $d$, and $e$ is the multiplicative order
of $q$ in the base field $\bF_p\subseteq k$. 
The assumption that the Sylow $p$-subgroup of 
$G$ is not cyclic is equivalent to the condition that $n \geq 2e$. 

The proof is split into several cases. The first case is when $e$ 
divides $n$ but the quotient $n/e$ is not a power of $p$. 

\begin{prop} \label{prop:composite}
Suppose that $G = \SL(re, q)$ for $r\geq 2$ not a power of $p$.
Assume also that if $p=2$, then $r \geq 4$. 
Then the trivial module $k$ is the only indecomposable $kG$-module with
trivial Sylow restriction. In particular, $TT(G) = \{0\}$.
\end{prop}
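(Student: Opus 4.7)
The approach is to apply Theorem~\ref{thm:method} with $H$ equal to the $G$-normalizer of a convenient Levi subgroup containing a Sylow $p$-subgroup $S$ of $G$. Write the $p$-adic expansion $r = \sum_{i=0}^M a_i p^i$ and let $\whl \subseteq \GL(n,q)$ be the block-diagonal Levi subgroup of Theorem~\ref{thm:p-local}(b), whose block sizes are $e$ (with multiplicity $a_0$), $ep$ (with multiplicity $a_1$), and so on, chosen so that a Sylow $p$-subgroup of $\GL(n,q)$ is contained in $\whl$. Put $L = \whl \cap G$ and $H = N_G(L)$. Because $r$ is not a power of $p$, $\whl$ is a proper Levi of $\GL(n,q)$ so that $H$ is a proper subgroup of $G$, and by Theorem~\ref{thm:p-local}(c) we have $N_G(S) \subseteq H$.

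To verify condition~(A) of Theorem~\ref{thm:method}, I invoke Proposition~\ref{prop:directprod} applied to $L$: when $e > 1$, every block size of $\whl$ is at least $e$ and there are $r \geq 2$ of them, so hypothesis (b) is immediate; when $e = 1$, the non-power-of-$p$ hypothesis on $r$ provides (in each subcase) at least two blocks of size $> 1$ in $\whl$, fulfilling (a). Either way, every indecomposable $kL$-module with trivial Sylow restriction is one-dimensional. To promote this from $L$ to $H$, I mimic the proof of Theorem~\ref{thm:directprod}: $L$ is normal in $H$ with quotient a product of symmetric groups $\prod_i \fS_{a_i}$, and a Clifford-theoretic / Krull--Schmidt argument applied to the $L$-restriction of a putative indecomposable $kH$-module with trivial Sylow restriction forces it to be one-dimensional as well.

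For condition~(B), Lemma~\ref{lem:Levicom} shows that $\lvert H/[H,H]\rvert \leq 4$, so $H$ admits a generating set $g_1, \ldots, g_m$ in which all but at most two of the $g_i$ lie in $[H,H]\subseteq [H,H]\cdot S$ and satisfy (B)(1) automatically. The two remaining generators can be taken to be a diagonal matrix $d = \mathrm{diag}(aI_e, a^{-1}I_e, I, \ldots, I)$ with $a$ a generator of $\bfq^\times$, and (when $r = 2$) a signed block-swap matrix $\sigma$. For each such $g$, I construct an auxiliary subgroup $H_g \leq G$ satisfying (B)(2): for $d$, take $H_d$ to be the enlarged Levi obtained by merging the two $e$-blocks carrying $a, a^{-1}$ into a single $2e$-block, so that $d \in [H_d,H_d]$, $p$ divides $\lvert H_d \cap H \rvert$, and a second application of Proposition~\ref{prop:directprod} furnishes (B)(2)(a); for $\sigma$ one uses an analogous Levi within which $\sigma$ is realized as a commutator.

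The main obstacle is implementing the (B)(2) constructions uniformly across all admissible $(r, e, p)$. The subtlest subcase is $r = 2$, where $H/[H,H]$ is Klein four and there is less slack for absorbing the block-swap generator as a commutator, together with the small-characteristic instances. The hypothesis $r \geq 4$ when $p = 2$ is present precisely to exclude the pathological configurations in which the standard block-merging construction fails to produce a valid $H_\sigma$.
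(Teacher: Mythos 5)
Your overall strategy (Theorem~\ref{thm:method} applied to a subgroup $H$ containing $N_G(S)$ built from a Levi, with Proposition~\ref{prop:directprod} supplying condition (A)) is the right family of ideas, and the promotion of condition (A) from $L$ to $H=N_G(L)$ does work for your choice of $L$ since $|H/L|=\prod_i a_i!$ is prime to $p$. But the condition (B) bookkeeping is wrong at two key points. First, Lemma~\ref{lem:Levicom} only treats Levis with \emph{equal} block sizes; for the $p$-adic Levi with at least two distinct block sizes the abelianization of $H=N_G(L)$ is not of order at most $4$ --- it contains roughly a copy of $\bfq^\times$ for each additional block-size family (compare the paper's case $r=ap^s+b$, where the relevant quotient is $N/[N,N]\cong\bfq^\times$). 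So your claim that all but two generators lie in $[H,H]S$ fails, and one needs a ``straddling'' diagonal generator for each boundary between families, each with its own auxiliary subgroup (this is exactly what the paper's shifted Levi $\whl(ap^se-1,be+1)$ accomplishes). Second, and more seriously, in the case $r=2p^s$ ($p$ odd) the expansion has a single digit $a_s=2$, $H$ is the normalizer of two equal blocks of size $p^se$, and the signed block-swap $\sigma$ cannot be realized inside any proper Levi: merging the two blocks gives all of $G$. The paper's proof hinges here on a genuinely different construction, the field-extension embedding $\hat{\varphi}\colon\GL(2p^s,q^e)\to\GL(2p^se,q)$ and the subgroup $H_1=C_G(\hat{\varphi}(Y))$ for $Y$ a central element of order $p$ of $\SL(2p^s,q^e)$, so that $\sigma=\hat{\varphi}(X)\in[H_1,H_1]$ while Proposition~\ref{prop:normal} gives (B)(2)(a). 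Your proposal offers only ``an analogous Levi'' for $\sigma$, which does not exist in this case; this is the heart of the proof and is missing.

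Two further inaccuracies: your verification of condition (A) when $e=1$ is false as stated --- for $r=p^s+c$ with $1\le c<p$ the $p$-adic Levi has exactly one block of size greater than one, so hypothesis (a) of Proposition~\ref{prop:directprod} fails; the paper instead kills the $e=1$ subcase of the equal-block situations outright via the central element of order $p$ and Proposition~\ref{prop:normal}, and uses coarser two-block Levis elsewhere. Also, the hypothesis $r\ge 4$ when $p=2$ is not there to rescue an $H_\sigma$ block-merging construction (for $p=2$ the equal-block cases are excluded since they assume $p>2$); it is needed so that the two-block Levis occurring in the case $r=ap^s+b$ have both blocks large enough for Proposition~\ref{prop:directprod} when $e=1$. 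Finally, your uniform treatment does not cover $r<p$ (abelian Sylow $p$-subgroup), where the swap generator again has no Levi home for $r=2$; the paper disposes of that range by quoting the classification in the cyclic/abelian setting of \cite{CMN3} before starting the case analysis.
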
 

\begin{proof}  First notice that if  $r < p$, then a Sylow 
$p$-subgroup is abelian and the proposition is proved in \cite{CMN3}.
Hence, we may assume further that $n = re > pe$. 

The proof is divided into three cases: 
\begin{itemize}
\item[(i)] $r = 2p^s$ for some $s\geq1$ and $p>2$, 
\item[(ii)] $r = ap^s$ for $2 < a < p$, and 
\item[(iii)] $r = ap^s +b$ for $1 \leq a < p$ and $1 \leq b < p^s$.
\end{itemize}

Note that in cases (i) and (ii) we may assume that $e>1$ and that $p>2$,
as otherwise, $p$ divides both $n$ and $q-1$. In that case,  $\SL(n,q)$ is a
perfect group with a nontrivial normal $p$-subgroup,
and the proposition is a consequence of Proposition~\ref{prop:normal}.

In the first two cases, 
let $m = p^se$, so that $n = am$. Let
\[
\whl \quad = \quad \whl(m, \dots, m) 
\quad \cong \quad \GL(m,q)^a \quad \subseteq \quad
\GL(n,q)
\]
be the Levi subgroup consisting of $a$ diagonal $m \times m$
blocks. Let $L = \whl \cap G$ and $N = N_G(L)$. The group 
$N$ is an extension (perhaps not 
split) of the form 
\[
\xymatrix{ 
0 \ar[r] & L \ar[r] &  N \ar[r] & \fS_a \ar[r] & 0,
}
\]
where $\fS_a$ is the symmetric group on $a$ letters. In addition, 
$N$ contains the normalizer of a Sylow $p$-subgroup of $G$
(cf. Lemma~\ref{thm:p-local}). 

\vskip.05in
\noindent{\it Case} (i).
Suppose that $a=2$, and $n = am = 2p^se$. 
The commutator subgroup $[N,N]$ must contain the perfect group
$\SL(m,q)\times\SL(m,q)$. By Lemma~\ref{lem:Levicom}, if $q$ is odd,
then the quotient group $N/[N,N]$ is a Klein four group, and we see that we can choose generators 
represented by the elements 
\[
\sigma  =  \begin{bmatrix}  & I_m \\ -I_m  \end{bmatrix}
\qquad \hbox{and} \qquad 
\tau  =  \begin{bmatrix} c   \\
		& I_{m-1}  \\
		 &  & c^{-1}  \\
		 &  &  & I_{m-1} \end{bmatrix}
\]
where $c$ is a generator for the Sylow $2$-subgroup of
$\bfq^\times$. If $q$ is even, then $N/[N,N]$ has order $2$ and is
generated by $\sigma$ (remembering that $-1 = 1$). 
To invoke Theorem \ref{thm:method} is is enough to show that $\sigma$ and
$\tau$, are in the commutator subgroup of
the normalizer of some nontrivial $p$-subgroup of $N$. 

There is an embedding $\varphi: \bF_{q^e} \to \hbox{Mat}_e(\bfq)$
where Mat${}_e(\bfq)$ is the algebra of $e \times e$ matrices
over $\bfq$. This is given as the action of the algebra $\bF_{q^e}$
on itself, but regarded as a vector space over $\bfq$. From 
this we get a homomorphism $\hat{\varphi}: \GL(2p^s,q^e) 
\to \GL(2p^se, q)$. That is, the map $\hat{\varphi}$ replaces an element 
given by a matrix $(a_{i,j})$ by the block matrix $(\varphi(a_{i,j}))$.
Again, $\hat{\varphi}$ can be obtained by taking the natural module
for $\GL(2p^s,q^e)$ and writing it as a module over $\bfq$ of dimension
$2p^se$.

The group $\SL(2p^s,q^e)$ has a central element $Y$ of order $p$ since
$s>0$.  Observe that $\hat{\varphi}(Y)$ is also in $N$. 
Let $H_1 = C_G(\hat{\varphi}(Y))$, which contains the image
$\hat{\varphi}(\SL(2p^s,q^e))$. In particular, we have that
$\varphi(-1) = -I_e$, and so for
\[
X = \begin{bmatrix} & I_{p^s} \\ -I_{p^s} &  \end{bmatrix}
\qquad \text{then} \qquad \hat{\varphi}(X) 
= 
\begin{bmatrix}  & I_{m} \\ -I_{m} &  \end{bmatrix}
=  \sigma.
\]
Note that $X$ is in $\SL(2p^s,q^e)$, and hence $\sigma$ is
in the commutator subgroup of $H_1$. Moreover, 
because $H_1$ has a central element of order $p$, any 
indecomposable $kH_1$-module 
with trivial Sylow restriction has dimension one. Thus $H_1$ and 
$g_1 =\sigma$
satisfy condition (B)(2) of Theorem \ref{thm:method} with $H = N$.
Clearly, any element of $[N,N]$, satisfies condition (B)(1) of Theorem
\ref{thm:method}, which implies that the proposition holds in case (i)
if $q$ is even, because $N=\langle [N,N],g_1\rangle$. 

To finish the proof for $q$ odd, we prove the similar result for $\tau$. 
Let $\whh_2 = \whl(2m-e,e) \subseteq \GL(n,q)$ be the Levi subgroup consisting
of diagonal block matrices of sizes $2m-e$ and $e$, and
let $H_2 = \whh_2 \cap G$. By Proposition
\ref{prop:directprod}, any indecomposable $kH_2$-module with 
trivial Sylow restriction has dimension one. Clearly, $H_2 \cap N$
has order divisible by $p$. The commutator subgroup 
$[H_2,H_2] \cong \SL(2m-e,q) \times \SL(e,q)$ contains the element 
$\tau$. So condition (B)(2) of Theorem \ref{thm:method} is satisfied for
$g_2=\tau$ and $H_2$, and the proposition holds in case (i). 
\vskip.05in

\noindent{\it Case} (ii).  Now suppose that $2 < a < p$.
In this case, the quotient group $N/[N,N]$ has 
order $2$ and a generator is represented by the element 
\[
\sigma =  \begin{bmatrix} & I_m &  \\ 
		-I_m & \\
		& & I_{(a-2)m}  \end{bmatrix}
\]
Again, it is enough to show that $\sigma$, is in
the commutator subgroup of an appropriate subgroup of $G$ to invoke
Theorem \ref{thm:method}.  Let 
$\whl = \whl(2m,(a-2)m)\cong \GL(2m,q) \times \GL((a-2)m,q)$
be the Levi subgroup of diagonal block matrices of size $2m$ and
$(a-2)m$, for $m =p^se$.
Let $H_1 =\whl \cap G$. Every indecomposable $kH_1$-module with
trivial Sylow restriction has dimension one, by Proposition
\ref{prop:directprod}.
Clearly, $H_1 \cap N$ has order divisible by $p$, and $\sigma$,
is in $[H_1,H_1] \cong \SL(2m,q) \times
\SL((a-2)m,q)$. Again Condition (B)(2) of Theorem \ref{thm:method} holds
for $\sigma$, and $H_1$.
So the proposition is proved also in case (ii). 
\vskip.05in
\noindent{\it Case} (iii). Let 
$\whl = \whl(ap^se,be)\cong \GL(ap^se,q) \times \GL(be,q)$ 
be the Levi subgroup of blocks of size $ap^se$ and $be$, and put
$N = \whl \cap G$. 
Observe that,  $N$ contains the 
normalizer of a Sylow $p$-subgroup
of $G$. Thus by Proposition \ref{prop:directprod}, 
any indecomposable $kN$-module with trivial 
Sylow restriction has dimension one.

The commutator subgroup of $N$ is the direct product
$\SL(ap^se,q)\times\SL(be,q)$, 
implying  that $N/[N,N] \cong \bfq^\times$. Hence, 
$N$ is generated by $[N,N]$ and a diagonal matrix $\sigma$ with diagonal
entries $1, 1, \dots, 1, w, w^{-1},1, \dots, 1$ where $w$ is a generator
of $\bfq^\times$ and the nonidentity entries occur in rows  $ap^se$ and
$ap^se+1$.  

Now let $\whh_1 = \whl(ap^se-1,be+1)
\cong \GL(ap^se-1,q) \times \GL(be+1,q)$, the Levi subgroup of 
blocks of size $ap^se-1$ and $be+1$. Let $H_1= \whh_1 \cap G$.
It is straightforward to show that condition (B)(2) of Theorem
\ref{thm:method} is satisfied for $g_1=\sigma$, $H_1$ and $H = N$, and
the proposition holds in case (iii). This completes the proof. 
\end{proof}

The  next step is the following. 

\begin{prop} \label{prop:sleps}
Suppose that $G = \SL(p^se, q)$ and $s \geq 1$. Then 
any indecomposable $kG$-module with trivial Sylow restriction 
has dimension one. Thus, if $p^se > 2$, then $TT(G) = \{0\}$.  
\end{prop}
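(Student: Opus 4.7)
The plan is to apply Theorem~\ref{thm:method}. I begin by reducing to the case $e \geq 2$. When $e = 1$, the prime $p$ divides $q-1$, hence $p$ divides $|Z(G)| = \gcd(p^s, q-1)$, so $G$ has a nontrivial normal $p$-subgroup. Proposition~\ref{prop:normal} immediately implies that every indecomposable $kG$-module with trivial Sylow restriction has dimension one. For the conclusion $TT(G) = \{0\}$ when $n = p^se > 2$, note that $\SL(n,q)$ is perfect whenever $n \geq 3$, so every one-dimensional $kG$-module is trivial. Henceforth assume $e \geq 2$, which forces $p$ to be odd since the multiplicative order of $q$ modulo $2$ is always $1$.

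Let $\whl = \whl(e, \dots, e) \cong \GL(e,q)^{p^s}$ be the Levi subgroup of $p^s$ diagonal $e \times e$ blocks, put $L = \whl \cap G$, and let $N = N_G(L)$. By Theorem~\ref{thm:p-local}(c), $N$ contains the normalizer $N_G(S)$ of a Sylow $p$-subgroup $S$ of $G$, so $N$ is a legitimate choice for the subgroup $H$ in Theorem~\ref{thm:method}. I must first establish condition~(A) for $N$, namely that every indecomposable $kN$-module with trivial Sylow restriction has dimension one. Proposition~\ref{prop:directprod} gives this property for $L$ itself by decomposing the $p^s$ blocks as a split of size $e$ versus size $(p^s-1)e$, both of which are $\geq e$ since $e \geq 2$. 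Transferring the one-dimensional conclusion from $L$ up to $N$ is the main technical obstacle; I would argue it by restricting an indecomposable $kN$-module $M$ with trivial Sylow restriction to $L$ (a free $kS$-module remains free on restriction to $S \cap L$, so the trivial Sylow restriction property passes from $N$ to $L$), then invoking (A) for $L$ to obtain a unique one-dimensional $L$-summand $\chi$, and finally running a Clifford-type argument showing that $\chi$ is $N/L$-invariant and that the action of $N/L$ on this summand lifts to a one-dimensional character of $N$ that must coincide with $M$.

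With (A) in hand, the remaining verification is straightforward. Lemma~\ref{lem:Levicom}(b), applied with $r = p^s \geq 3$, gives $N/[N,N] \cong C_2$, generated by (the class of) a signed block-permutation $\sigma$ exchanging two of the $e \times e$ blocks. Every element of $[N,N]$ satisfies condition (B)(1) of Theorem~\ref{thm:method}. For the single remaining generator $g_1 = \sigma$, I take
\[
H_1 \;=\; \whl(2e,\,(p^s-2)e) \,\cap\, G.
\]
Proposition~\ref{prop:directprod}, applied to the split $(2e, (p^s-2)e)$ with both parts $\geq e$ (as $e \geq 2$ and $p^s \geq 3$), gives (B)(2)(a); the inclusion $L \subseteq H_1 \cap N$, together with $p \mid |L|$, gives (B)(2)(b); and $\sigma$ lies in the perfect group $\SL(2e,q) \subseteq [H_1,H_1]$ (since $2e \geq 4$), giving (B)(2)(c). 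Theorem~\ref{thm:method} now yields the desired conclusion, and the assertion $TT(G) = \{0\}$ follows at once since $G = \SL(p^se,q)$ is perfect whenever $p^se > 2$.
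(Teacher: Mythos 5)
There is a genuine gap, and it sits exactly where you flagged it: condition (A) of Theorem~\ref{thm:method} for your choice $H = N = N_G(L)$. Your sketch transfers the one-dimensionality statement from $L$ to $N$ by restricting $M$ to $L$, extracting a one-dimensional summand $\chi$, and then arguing via Clifford theory that $M$ itself is one-dimensional; implicitly this uses that $M$ is a direct summand of $\chi^{\uparrow N}$. That step fails: $M$ is relatively $L$-projective only if $L$ contains a Sylow $p$-subgroup of $N$ up to the usual vertex considerations, and here $|N : L| = |\fS_{p^s}|$ is divisible by $p$, so nothing forces $M \mid (M_{\downarrow L})^{\uparrow N}$. (Compare the proof of Theorem~\ref{thm:directprod}, which restricts to subgroups such as $\whh S$ that \emph{do} contain a Sylow $p$-subgroup precisely so this induction step is legitimate; see also the remark that conditions of this kind are not inherited by smaller subgroups.) Knowing that $M_{\downarrow S\cap L}\cong k\oplus\proj$ and $M_{\downarrow L}\cong\chi\oplus\proj$ does not bound $\dim M$: the claim that every indecomposable $kN$-module with trivial Sylow restriction is one-dimensional for this wreath-shaped group $N$ (an extension of $L$ by $\fS_{p^s}$) is essentially a statement of the same difficulty as the proposition itself, and your argument does not prove it. A secondary quibble: your appeal to Theorem~\ref{thm:p-local}(c) to get $N_G(S)\subseteq N$ gives nothing when $r = p^s$, since the Levi subgroup there is all of $\GL(n,q)$; one needs a separate argument (e.g. via the characteristic elementary abelian subgroup $Q=\Omega_1$ of the base group and the pairwise non-isomorphic isotypic components it defines).

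The paper's proof is designed to avoid exactly this obstacle. It takes $H = N_G(Q)$, where $Q$ is the rank-$p^s$ elementary abelian subgroup generated by one order-$p$ element in each $e\times e$ block; $Q$ is characteristic in $S$ by Theorem~\ref{thm:p-local}(d), so $N_G(S)\subseteq N_G(Q)$, and since $Q$ is a nontrivial normal $p$-subgroup of $H$, condition (A) is immediate from Proposition~\ref{prop:normal}. Condition (B) is then verified by writing generators of $N_G(Q)$ and showing they lie in commutator subgroups of the normalizers $N_G(R_i)$ of rank-two subgroups $R_i=\langle x_i,x_{i+1}\rangle$, whose structure involves $\SL(e(p^s-2),q)$; this works when $p^s>3$, and the one residual case $p^s=3$, $e=2$ (where a computation shows $\rho^2(Q)\neq N_G(Q)$) is handled separately via $\rho^3(Q)=N_G(Q)$ and \cite[Corollary 4.6]{CT4}. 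So while your verification of (B) with $H_1=\whl(2e,(p^s-2)e)\cap G$ would be fine if (A) held for $N_G(L)$, the missing proof of (A) is the crux, and repairing it would in effect push you back to the paper's choice of $H$ (or to some other subgroup with a nontrivial normal $p$-subgroup).
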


\begin{proof}
First we should notice that if $e = 1$, that is, if $p$ divides $q-1$, 
then $G$ has a central subgroup of order $p$, and we are done 
by Proposition \ref{prop:normal}. So assume that 
$e>1$. This assumption requires that $p > 2$. 

Let $\theta: \GL(e,q)^{p^s} \to \GL(ep^s,q)$ be the injective group
homomorphism given by letting 
$\theta(A_1, \dots, A_{p^s})$ be the block diagonal matrix of 
$e \times e$ blocks $A_1, \dots, A_{p^s}$:
\[
\theta(A_1, \dots, A_{p^s}) =  
\begin{bmatrix}
A_1 &&&& \\
& A_2 &&& \\
&& \dots && \\
 &&&&	A_{p^s}
\end{bmatrix}
\]
Choose an element $u \in \SL(e,q)$ of order $p$. 
For $i = 1, \dots, p^s$, let $x_i = \theta(A_1, \dots, A_{p^s})$
where $A_i = u$  and $A_j = I_e$, the $e \times e$ identity matrix,
for $j \neq i$.  Let $Q = \langle x_1, \dots, x_{p^s} \rangle$. 

Since $p$ is odd, $Q$ is the unique elementary abelian
subgroup of rank $p^s$ in some given Sylow $p$-subgroup $S$ of $G$ and
each elementary abelian subgroup of $S$ is conjugate to a subgroup of
$Q$, by Theorem~\ref{thm:p-local}.
Thus $Q$ is characteristic in $S$, which implies that 
$N_G(S)\subseteq N_G(Q)$. Hence, we may apply Theorem \ref{thm:method} to
$H = N_G(Q)$.

Write $S=C\wr R$ where $u\in C$ and $C\cong C_{p^t}$ is a Sylow
$p$-subgroup of $\GL(e,q)$, and where $R\cong (C_p)^{\wr s}$ is a Sylow
$p$-subgroup of $\fS_{p^s}$. Note that $\langle u\rangle\subseteq C$
with equality if and only if $t=1$.

From \cite[Section 6]{CMN3}, we have 
$N_{\GL(e,q)}(\langle u\rangle)=N_{\GL(e,q)}(C)=\langle w,g\rangle\cong
C_{q^e-1}\rtimes C_e$ where $\ls gw=w^q$. 
Hence, $H = N_G(Q)$ is an extension
$$
\xymatrix{
1\ar[r]&  J \ar[r]  &  H \ar[r]   &  \fS_{p^s}\ar[r] & 1
}
$$
where 
\[
J = N_{\GL(e,q)}(C)^{p^s}\cap G =
\{\theta(A_1,\dots,A_{p^s})~|~A_i\in N_{\GL(e,q)(C)}~,~
\prod_{1\leq  i\leq p^s}\Det(A_i)=1\}
\] 
Thus $J$ is generated by conjugates under $\fS_{p^s}$ of elements of the form
\[
a = \theta(A_1,A_2,I_e,\dots,I_e)\qbox{where}
\]
$A_1,A_2\in N_{\GL(e,q)}(C)$ and $\Det(A_1)\Det(A_2)=1$.
So $H$ is generated by $J$ and elements of the form
\[
X_i = \begin{bmatrix}I_{e(i-1)}&&\\&\tau&\\&&I_{e(p^s-i-1)}\end{bmatrix}
\quad\hbox{with}\quad
\tau=\begin{bmatrix}&I_e\\-I_e&\end{bmatrix}
\]
for $1\leq i\leq p^s-1$. Note that all $X_i$ are conjugate. 

For $i = 1, \dots, p^s-1$, let 
$R_i=\langle x_i,x_{i+1}\rangle\subset S$. Then $N_G(R_i)$ is an
extension
\[
\xymatrix{
1\ar[r]&(N_{\GL(e,q)}(C)\wr\fS_2)\cap  G\ar[r]
&N_G(R_i)\ar[r]&\SL(e(p^s-2),q)\ar[r]&1
} 
\]

If $p^s > 3$, then the elements $a$ and $X_1$ lie in the commutator
subgroup of $N_G(R_3)$. A similar condition holds for any conjugates of
$a$ and $X_1$ under the action of $\fS_{p^s}$. By applying Theorem
\ref{thm:method} to $H = N_G(Q)$ and the generators given above we are
done.  

We are left with the case $p^s = 3$ and $e = 2$.
A computer calculation shows that for $G = \SL(6,2)$ with $p = 3$,
we have $\rho^2(Q) \neq N_G(Q)$. Hence,  the method of Theorem
\ref{thm:method} fails. One the other hand, \cite[Proposition 7.9]{CMN3} 
shows that $a,X_1\in \rho^2(R_1)$ and
therefore all their conjugates are in $\rho^3(Q)$. Thus
$N_G(Q)=\rho^3(Q)$, and Corollary 4.6 of \cite{CT4} asserts
that  $TT(G)=\{0\}$ in this case.
\end{proof}

We are now ready for the proof of the main theorem of the section. 

\begin{thm} \label{thm:slgencase}
Assume Notation~\ref{note:basic}. 
Let $G = \SL(n, q)$ with $n \geq 2e$ if $p$ is odd, or $n\geq3$
if $p =2$. Then the trivial $kG$-module is the unique indecomposable
$kG$-module with trivial Sylow restriction.
\end{thm}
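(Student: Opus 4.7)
The argument splits into two cases according to whether $e$ divides $n$. Suppose first that $e \mid n$, so $n = re$ for some $r \geq 2$. If $r$ is a power of $p$, Proposition~\ref{prop:sleps} gives the conclusion; otherwise (with the modest restriction $r \geq 4$ when $p=2$) Proposition~\ref{prop:composite} does. The small exceptional case $p=2$, $n=3$ admits a parallel but separate treatment along the lines of the second case below and is also handled independently in Section~\ref{sec:sl32}. So the remaining substance of the proof is the case $n = re + f$ with $r \geq 2$ and $1 \leq f < e$, which forces $e \geq 2$ and hence $p$ odd.

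For this case I would set $H = L := \whl(re, f) \cap G$, where $\whl(re,f) \cong \GL(re,q) \times \GL(f,q)$ is the standard Levi of block sizes $re$ and $f$, and apply Theorem~\ref{thm:method}. The inclusion $N_G(S) \subseteq L$ follows from Theorem~\ref{thm:p-local}(c), since the two block sizes $re$ and $f$ differ and so $\whl(re,f)$ is self-normalizing in $\GL(n,q)$. To verify hypothesis~(A), observe that $f < e$ forces $|\GL(f,q)|$ to be coprime to $p$, so the Sylow $p$-subgroup $S$ of $L$ lies inside $\SL(re,q) \trianglelefteq L$ and the quotient $L/\SL(re,q) \cong \GL(f,q)$ is a $p'$-group. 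If $M$ is an indecomposable $kL$-module with trivial Sylow restriction, then Propositions~\ref{prop:composite} and~\ref{prop:sleps} applied to $\SL(re,q)$ force $M_{\downarrow \SL(re,q)} \cong k \oplus \proj$, so by Frobenius reciprocity $M$ is a direct summand of $k_{\SL(re,q)}^{\uparrow L} \cong k[\GL(f,q)]$ (with $L$ acting through the quotient). This induced module is semisimple, and its indecomposable summands having trivial Sylow restriction are precisely the one-dimensional characters of $L$ inflated from linear characters of $\GL(f,q)$.

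For hypothesis~(B), note that $[L,L] = \SL(re,q) \times \SL(f,q)$ and $L/[L,L] \cong \bfq^\times$. With $w$ a generator of $\bfq^\times$, choose $\sigma = \mathrm{diag}(1, \ldots, 1, w, 1, \ldots, 1, w^{-1}, 1, \ldots, 1)$ with $w$ in position $(r-1)e + 1$ and $w^{-1}$ in position $re + 1$; then $L = \langle [L,L], \sigma \rangle$, and generators drawn from $[L,L]$ automatically satisfy~(B)(1). For $\sigma$, I would take $H_1 = \whl((r-1)e, e+f) \cap G$: both block sizes are at least $e$ with $e > 1$, so Proposition~\ref{prop:directprod}(b) yields~(2)(a); the intersection $H_1 \cap L$ is the Levi $\whl((r-1)e, e, f) \cap G$, whose order is divisible by $p$ (the factor $\SL((r-1)e,q)$ alone supplies a factor of $p$), giving~(2)(b); and by construction both nontrivial entries of $\sigma$ lie in the second block of $H_1$ of size $e+f$, so $\sigma \in \SL((r-1)e,q) \times \SL(e+f,q) = [H_1, H_1]$, giving~(2)(c). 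Theorem~\ref{thm:method} then delivers the uniqueness of the trivial module. The main technical obstacle is hypothesis~(A) for $L$: Proposition~\ref{prop:directprod} does not apply directly because the small block $\GL(f,q)$ with $f < e$ has $p'$-order, and the Clifford-style reduction through the $p'$-quotient $L/\SL(re,q)$ is precisely what bridges this gap.
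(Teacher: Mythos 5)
Your treatment of the $f=0$ case is exactly the paper's (Propositions~\ref{prop:composite} and~\ref{prop:sleps}, with the $p=2$, $n=3$ case deferred to the independent argument of Section~\ref{sec:sl32}). For $f>0$ you genuinely diverge: the paper disposes of this case in three lines by embedding $\SL(n-1,q)\hookrightarrow\SL(n,q)$ (an inclusion of index prime to $p$) and citing the injectivity of the restriction map $T(\SL(n,q))\to T(\SL(n-1,q))$ from \cite[Theorem 9.6]{CMN3}, then inducting on $f$; you instead run Theorem~\ref{thm:method} directly with $H=L=\whl(re,f)\cap G$, proving hypothesis (A) by a Clifford-style reduction through the $p'$-quotient $L/\SL(re,q)\cong\GL(f,q)$ (your relative-projectivity argument forcing $M\mid k_{\SL(re,q)}^{\uparrow L}$ is correct, and since $S$ acts trivially on anything inflated from $\GL(f,q)$, only one-dimensional summands can have trivial Sylow restriction), and hypothesis (B) via the larger Levi $H_1=\whl((r-1)e,e+f)\cap G$ together with Proposition~\ref{prop:directprod}. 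This is sound and has the virtue of staying entirely inside the paper's own machinery rather than importing the injectivity theorem from \cite{CMN3}; the paper's route is much shorter but leans on that external result. Note that your placement of $\sigma$ in $[H_1,H_1]$ is cleanest via the observation that both nonidentity entries lie in the second block, so $\sigma\in 1\times\SL(e+f,q)$, which is perfect since $e+f\geq 3$.

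Two points need tightening. First, the generation claim ``$[L,L]=\SL(re,q)\times\SL(f,q)$, $L/[L,L]\cong\bfq^\times$, and $L=\langle[L,L],\sigma\rangle$'' fails when $q=2$ and $f=2$: there $\bfq^\times$ is trivial, so your $\sigma$ is the identity, while $[\GL(2,2),\GL(2,2)]\cong C_3\neq\SL(2,2)$, so $L=\GL(re,2)\times\GL(2,2)$ is not perfect and your listed generators do not generate $L$. The fix is cheap: adjoin as an extra generator the permutation matrix $\tau$ interchanging coordinates $re+1$ and $re+2$; it has determinant one over $\bF_2$ and lies in the block $\SL(e+f,2)$ of the same $H_1$, hence in $[H_1,H_1]$, so condition (B)(2) holds for it as well. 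Second, your justification of $N_G(S)\subseteq L$ is too quick: self-normalization of $\whl(re,f)$ by itself does not give containment of $N_G(S)$. What you need is Theorem~\ref{thm:p-local}(c), which places $N_{\GL(n,q)}(S)$ in the normalizer of the fine Levi of block sizes $e,ep,\dots,ep^M,1,\dots,1$ containing $S$, plus the remark that an element normalizing that fine Levi permutes blocks of equal size, and since $e>1$ it therefore preserves both the span of the blocks of size at least $e$ and the span of the $f$ blocks of size one, hence lies in $\whl(re,f)$. With these two repairs your argument is complete.
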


\begin{proof}
Let $n=re+f$ with $r\geq 2$ and $0 \leq f < e$.  
By Propositions \ref{prop:composite} and \ref{prop:sleps}, the
theorem is true if $f = 0$.  
Hence, we assume that $f > 0$ and thus also $e >1$. 
There is a  natural embedding of $\SL(n-1,q)\hookrightarrow
\SL(n,q)$. It is an easy exercise to show that the index of 
$\SL(n-1,q)$ in $\SL(n,q)$ is prime to $p$ and,  hence, 
$\SL(n-1,q)$ contains a Sylow $p$-subgroup of $\SL(n,q).$
By \cite[Theorem 9.6]{CMN3}, the restriction map $T(\SL(n,q))
\to T(\SL(n-1,q))$ is injective since $e>f\geq 1$.
Therefore, by induction on $f$, the proof of the theorem is
complete.
\end{proof}


\section{Proof of Theorem~\ref{thm:noncyclic1a}}\label{sec:thm11}
The proof of Theorem~\ref{thm:noncyclic1a} is a consequence 
of Theorem \ref{thm:slgencase} and a case by case inspection depending
on the $p$-part of the central subgroup $Z$ of $G$ in
Theorem~\ref{thm:noncyclic1a}.
The next proposition is an essential step in the general proof. 

\begin{prop}\label{prop:rp-case}
Let $G = \SL(n,q)$ where $n = rp \geq 3$ for some $r\geq 1$ and 
assume that $p$ divides
$q-1$. If $n = p = 3$, assume further that
$9$ divides $q-1$. Let $Z$ be a nontrivial central subgroup of $G$.
Then the trivial $k(G/Z)$-module is the unique indecomposable
$k(G/Z)$-module with trivial Sylow restriction.
\end{prop}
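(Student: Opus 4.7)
Because $n = rp \ge 3$, the group $G = \SL(n,q)$ is perfect, so every quotient $G/Z$ is perfect and its only one-dimensional $k$-linear character is the trivial one. It therefore suffices to show that every indecomposable $k(G/Z)$-module with trivial Sylow restriction has dimension one. Let $W$ denote the Sylow $p$-subgroup of the cyclic group $Z(G)$; since $p \mid n$ and $p \mid q-1$, we have $W \ne 1$. I would split the argument according to whether $Z$ contains~$W$.

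In the first case, $W \not\subseteq Z$. Then $WZ/Z \cong W/(W\cap Z)$ is a nontrivial $p$-subgroup of $G/Z$, and since $W \le Z(G)$ it is central, hence normal, in $G/Z$. Proposition~\ref{prop:normal} then immediately yields that every indecomposable $k(G/Z)$-module with trivial Sylow restriction is one-dimensional, and by perfectness of $G/Z$ it is trivial.

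In the second case, $W \subseteq Z$. Writing $Z = W \cdot Z'$ with $Z'$ the $p$-complement of the cyclic group $Z$, the quotient map $G/W \twoheadrightarrow G/Z$ has $p'$-kernel, so Sylow $p$-subgroups of $G/W$ and $G/Z$ agree (both equal $S/W$ for $S$ a Sylow $p$-subgroup of $G$ containing $W$), and inflation from $G/Z$ to $G/W$ preserves both indecomposability and the property of having trivial Sylow restriction. It therefore suffices to prove the proposition in the special case $Z = W$, i.e. for $\bar G := G/W$. Here I would apply Theorem~\ref{thm:method} with $\bar H$ equal to the image in $\bar G$ of $N_G(L)$, where $L = \whl \cap G$ is a block-diagonal Levi subgroup of $G$ chosen so that $N_{\bar G}(\bar S) \le \bar H$ (as is possible by Theorem~\ref{thm:p-local}). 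Hypothesis (A) of Theorem~\ref{thm:method} would be verified by a direct-product argument in the spirit of Theorem~\ref{thm:directprod} and Proposition~\ref{prop:directprod}, suitably adapted to the quotient $\bar G$; hypothesis (B) would be handled by selecting auxiliary subgroups $\bar H_i$ (images of other block-diagonal Levi subgroups of $G$) whose intersections with $\bar H$ have order divisible by $p$ and whose commutator subgroups absorb the needed generators, exactly as in Cases~(i)--(iii) of the proof of Proposition~\ref{prop:composite} and in the Levi/permutation analysis of Proposition~\ref{prop:sleps}.

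The main obstacle I anticipate is the small case $n = p$ with $r = 1$, where the block-diagonal Levi constructions used in Proposition~\ref{prop:composite} are unavailable, and in particular the borderline case $n = p = 3$: here $Z(G) = W = C_3$, so $\bar G = \SL(3,q)/C_3$ and a Sylow $3$-subgroup of $\bar G$ is of comparatively small order. The hypothesis $9 \mid q-1$ is needed precisely to ensure that the Sylow $3$-subgroup of $\bar G$ is large enough (it then has order at least $3^{2s}$ with $s \ge 2$) for the Levi commutator computations, modelled on Lemma~\ref{lem:Levicom}, to produce generators of $\bar H$ inside commutator subgroups of the auxiliary subgroups $\bar H_i$. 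Without this hypothesis, the Sylow $3$-subgroup of $\bar G = \PSL(3,q)$ becomes elementary abelian of order $9$ and the present form of Theorem~\ref{thm:method} would not immediately apply---this is the case deliberately excluded by condition~(c) of Theorem~\ref{thm:noncyclic1a} and treated separately in Section~\ref{sec:char3}.
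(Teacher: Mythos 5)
Your reduction steps are fine and essentially parallel the paper's: the case $W\not\subseteq Z$ follows from Proposition~\ref{prop:normal} plus perfectness, and the inflation argument reducing the general case to the $p$-part of $Z$ is exactly what the paper does (the paper reduces to $Z$ a nontrivial central $p$-group rather than to $Z=W$, but this is the same idea). The gap is in the core of the argument, which your plan delegates to ``exactly as in Cases (i)--(iii) of Proposition~\ref{prop:composite} and Proposition~\ref{prop:sleps}.'' Those computations do not transfer to the present situation for two reasons. First, they were carried out under the assumption $e>1$: when $e=1$ (which is forced here, since $p\mid q-1$) both propositions short-circuit by citing the central $p$-element of $\SL(n,q)$ and Proposition~\ref{prop:normal}, and that escape route is precisely what disappears after you factor out $W$. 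Second, they are statements about $G$ itself, not about the central quotient $\bar G=G/W$; to run Theorem~\ref{thm:method} in $\bar G$ you must exhibit auxiliary $p$-subgroups that remain nontrivial modulo $Z$ (i.e.\ are noncentral in $G$) and show that the offending generators of $\bar H$ lie in commutators of their normalizers \emph{in the quotient}. This is exactly the nontrivial content, and it is what your plan flags as an ``anticipated obstacle'' for $n=p$ without resolving it.

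For comparison, the paper's proof supplies this missing construction uniformly for all $n=rp\geq 3$: it takes $H=N_{G/Z}(Q/Z)$ with $Q$ the Sylow $p$-subgroup of the diagonal torus (so hypothesis (A) holds by Proposition~\ref{prop:normal}, since $Q/Z\neq 1$), observes that $N_G(Q)/[N_G(Q),N_G(Q)]$ has order $2$ generated by the class of $X=\left[\begin{smallmatrix}U&\\&I_{n-2}\end{smallmatrix}\right]$ with $U=\left[\begin{smallmatrix}&1\\-1&\end{smallmatrix}\right]$, and then takes $R=\langle Y\rangle$ where $Y$ is the diagonal matrix with entries $\zeta,\zeta,\zeta^{-2},1,\dots,1$ ($\zeta$ a generator of the Sylow $p$-subgroup of $\bfq^\times$); since $Y$ is not scalar, $R/Z$ is a nontrivial $p$-subgroup of $Q/Z$, its normalizer contains the Levi $L=(\GL(2,q)\times\GL(1,q)\times\GL(n-3,q))\cap G$, and $X\in[L/Z,L/Z]$, so hypothesis (B) holds. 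Note also that this pinpoints where $9\mid q-1$ enters when $n=p=3$: it guarantees $t\geq 2$, so that $\operatorname{diag}(\zeta,\zeta,\zeta^{-2})$ is not scalar and $R$ does not collapse into $Z$; your heuristic that the hypothesis makes the Sylow $3$-subgroup ``large enough'' is not the actual mechanism. Without an explicit construction of this kind (or the $\rho^i$ computation it encodes), your proposal does not yet constitute a proof.
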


\begin{proof}
Recall that, in general, if $A, B, C$ are groups such that $C \subseteq
B \subseteq A$ and $C$ is normal in $A$, then $N_{A/C}(B/C) = N_A(B)/C$.
This fact is used to identify normalizers.

First, we consider the case that $Z$ is a nontrivial $p$-group.
Let $T$ be the torus of diagonal matrices in $G$, and let $Q$ be a Sylow
$p$-subgroup of $T$. We choose $S$ to be a Sylow $p$-subgroup of $G$
that contains $Q$.
We note that $Q$ is characteristic in $S$, it being the
unique abelian subgroup isomorphic to $(C_{p^t})^{n-1}$, where $t$ is the
highest power of $p$ that divides $q-1$. Therefore,
$N_G(S) \ \subseteq \ N_G(Q)$ and we have an extension
\[
\xymatrix{
1 \ar[r] & T \ar[r] & N_G(Q) \ar[r] & \fS_n \ar[r] & 1.
}
\]
There is an inclusion
\[
N_{G/Z}(S/Z) \ = N_G(S)/Z \ \subseteq N_G(Q)/Z \ = N_{G/Z}(Q/Z)
\]
where $N_{G/Z}(Q/Z)$ is an extension
\[
\xymatrix{
1 \ar[r] & T/Z \ar[r] & N_{G/Z}(Q/Z) \ar[r] & \fS_n \ar[r] & 1.
}
\]
Let $N = N_{G}(Q)$. Then $[N, N]$ has index $2$ in $N$, and so
$(N/Z)/[N/Z,N/Z]$ has order two and is generated by the class of the
element 
\[
X \ = \ \begin{bmatrix} U  \\ & I_{n-2} \end{bmatrix}, \quad
\text{where} \quad U \ = \ \begin{bmatrix}  & 1 \\ -1 \end{bmatrix}.
\]

By Theorem \ref{thm:method}, the proof of the theorem is complete in
this case if we show that the class of $X$ is in the commutator subgroup
of some nontrivial $p$-subgroup $R/Z$ of $Q/Z$.  For this let $R$ be the
subgroup generated by
\[
Y  \ = \ \begin{bmatrix} V &  \\  & I_{n-3} \end{bmatrix}, \quad
\text{where} \quad V \ = \ \begin{bmatrix} \zeta &  &
\\  & \zeta \\ & & \zeta^{-2} \end{bmatrix},
\]
where $\zeta$ is a generator for the Sylow $p$-subgroup of 
$\bfq^{\times}.$  Note that the matrix $V$ is not a scalar
matrix. Then the normalizer of $R$ contains the Levi subgroup
\[
L \ = \ (\GL(2,q) \times \GL(1, q) \times \GL(n-3,q)) \cap G.
\]
It follows that the class of $X$ in $G/Z$ is in $[L/Z, L/Z]$ which
is contained in the commutator subgroup
$[N_{G/Z}(R/Z), N_{G/Z}(R/Z)]$. By Theorem
\ref{thm:method}, the trivial $k(G/Z)$-module is the unique
indecomposable module with trivial Sylow restriction.

Next assume that $Z$ is an arbitrary nontrivial central 
subgroup of $G$. Let $\widehat{Z}$
denote the Sylow $p$-subgroup of $Z$. If $M$ is an
indecomposable $k(G/Z)$-module with
trivial Sylow restriction, then $M$ inflates to an indecomposable
trivial Sylow restriction $k(G/\widehat{Z})$-module
$\Inf_{G/Z}^{G/\widehat Z}M$ on which $Z/\widehat{Z}$ acts trivially.
But we have just shown that $\Inf_{G/Z}^{G/\widehat Z}M$ must have
dimension one. Thus $M$ is trivial.
\end{proof}

The following is also required. 

\begin{prop} \label{prop:gensl}
Suppose that $n \geq 2$, $p$ divides $q-1$, 
and if $p = n = 3$ assume that 9 divides 
$q-1$.  Let $S$ be a Sylow $p$-subgroup of $G= \SL(n,q)$ that 
contains the torus $T$ of diagonal matrices of order $p$. Then 
$N_G(T)$ is generated by a collection of elements, each of which is in 
the commutator subgroup of the normalizer of a 
subgroup of $T$ that is not central in $G$.
\end{prop}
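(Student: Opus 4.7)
First I will identify $T$ and $N_G(T)$ explicitly. Since $p$ divides $q-1$, we may take $T$ to be the Sylow $p$-subgroup of the full diagonal torus of $G = \SL(n, q)$: this is an abelian group of order $p^{t(n-1)}$, where $p^t$ is the $p$-part of $q-1$, and its centralizer in $\GL(n, q)$ is the full diagonal torus (because its eigenspaces in $k^n$ are the coordinate lines). Hence $N_G(T)$ is exactly the group of monomial matrices of determinant one in $\GL(n, q)$. This $N_G(T)$ is generated by the transvection-type diagonals $d_a^{(i, j)} = \operatorname{diag}(\dots, a, \dots, a^{-1}, \dots)$, with $a \in \bfq^\times$ in position $i$ and $a^{-1}$ in position $j$, together with signed transposition matrices $w_{ij}$ realizing the transpositions $(i, j) \in \fS_n$ as elements of $\SL(n, q)$.

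The plan is to associate, to each such generator $g$, a Levi-type subgroup $L = (\GL(k_1, q) \times \cdots \times \GL(k_s, q)) \cap G$ so that $g \in [L, L]$ and $Z(L) \cap T$ contains a subgroup $R$ that is non-central in $G$. Since $L \subseteq C_G(R) \subseteq N_G(R)$, this forces $g \in [L, L] \subseteq [N_G(R), N_G(R)]$, which is exactly what the proposition demands. For a generator $g$ acting on positions $i$ and $j$, I place these indices in a single block $B$ of the partition: then $d_a^{(i, j)}$ and $w_{ij}$ both lie in the $\SL$-factor of that block inside $[L, L]$, which is perfect outside the small cases $(|B|, q) \in \{(2, 2), (2, 3)\}$---and these can be accommodated by enlarging the block.

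The arithmetic heart of the argument is to choose the partition so that $Z(L) \cap T$ admits a non-central subgroup. The $G$-central scalars inside $Z(L) \cap T$ form a cyclic group of order $p^{\min(t, v_p(n))}$, while $Z(L) \cap T$ itself is the kernel of the map $(a_1, \dots, a_s) \mapsto \sum_i k_i a_i$ on $(\bZ/p^t\bZ)^s$ and so has rank $s - 1$ over $\bZ/p^t\bZ$ whenever some block size is coprime to $p$. Hence a non-central $R$ exists in $Z(L) \cap T$ whenever the scalar locus is a proper subgroup. This is automatic if $p \nmid n$; for $p \mid n$ a two-block partition suffices whenever $t > v_p(n)$ (so that $p^t > p^{v_p(n)}$), and otherwise one falls back on a partition into at least three blocks.

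The main obstacle is the borderline case $n = p = 3$ with $t = 1$. Here $v_p(n) = t = 1$ blocks the two-block route, while the only partition of $n = 3$ into three positive blocks is $(1, 1, 1)$, yielding an abelian Levi with trivial commutator subgroup. The hypothesis $9 \mid q - 1$ is precisely what removes this obstruction: it forces $t \geq 2$, so the two-block partition $(2, 1)$ of $n = 3$ then gives a Levi whose $Z(L) \cap T$ is cyclic of order nine, strictly containing the three central scalars, and any non-scalar element of this $Z(L) \cap T$ can serve as the required $R$ for the pair of positions sitting in the block of size two. In summary, the proof reduces to a case-by-case selection of partition for each pair of positions $\{i, j\}$, with the extra hypothesis on $9 \mid q - 1$ needed only to handle the exceptional configuration $n = p = 3$.
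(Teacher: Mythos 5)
Your proposal is in substance the same argument as the paper's, just cast more systematically: the paper simply takes the single element $Y=\operatorname{diag}(\zeta,\zeta,\zeta^{-2},1,\dots,1)$ (i.e.\ your partition $(2,1,n-3)$ with block-scalar $(\zeta,\zeta^{-2},1)$), notes that $[N_G(\langle Y\rangle),N_G(\langle Y\rangle)]$ contains every $\begin{bmatrix} U & \\ & I_{n-2}\end{bmatrix}$ with $U\in \SL(2,q)$, conjugates by permutation matrices, and observes that such elements generate $N_G(T)$; your identification of $N_G(T)$ as the monomial group with generators $d_a^{(i,j)}$ and $w_{ij}$, and your arithmetic criterion for when $Z(L)\cap T$ exceeds the scalars (which is exactly where the hypothesis $9\mid q-1$ enters for $n=p=3$), make explicit what the paper leaves as ``not difficult to show.'' Both treatments implicitly read $T$ as the Sylow $p$-subgroup of the diagonal torus rather than literally the exponent-$p$ diagonal subgroup, and both really need $n\geq 3$ (for $n=2$ your only admissible block is all of $\{1,2\}$ and $Z(L)\cap T$ is central, while the paper's $Y$ does not exist); since the proposition is only invoked for $n\geq 3$, this is a defect shared with the source, not one you introduced.

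The one concrete loose end is your treatment of the non-perfect blocks: you propose to handle $(|B|,q)=(2,3)$ by enlarging the block, but this conflicts with noncentrality in at least one configuration inside the scope of the statement, namely $G=\SL(4,3)$, $p=2$, $t=1$. There the partition $(3,1)$ gives $Z(L)\cap T$ equal to the scalars (the kernel of $3a_1+a_2$ mod $2$ is the diagonal), and the single block $(4)$ likewise, so no enlarged block admits a noncentral $R$; the partition that works is $(2,2)$, and one must instead observe that the derived subgroup of $L=(\GL(2,3)\times\GL(2,3))\cap G$ still contains the full $\SL(2,3)\times\SL(2,3)$ even though $\SL(2,3)$ is not perfect (this is exactly the assertion used in the proof of Lemma~\ref{lem:Levicom}). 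With that substitution, and a short verification that your ``fall back on three blocks'' step always succeeds when $p^t\neq 3$ via $(2,1,n-3)$ and via $(3,n-3)$ when $p^t=3<n$, your argument is complete and matches the paper's.
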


\begin{proof}
There is no loss of generality in assuming that $S$ contains the 
Sylow $p$-subgroup of the torus of diagonal matrices of determinant
one, and so contains $T$. 
Let $Y \in S$ be as in the proof of Proposition~\ref{prop:rp-case}. 
Then the commutator subgroup of the normalizer $N_G(\langle Y\rangle)$
of the subgroup generated by $Y$ contains any element of the form
\[
X \ = \ \begin{bmatrix} U & \\  & I_{n-2} \end{bmatrix}, \quad
\text{for} \qquad U \in \SL(2,q).
\]
Any conjugate of $X$, under a permutation matrix $P$, is contained
in the commutator subgroup of the normalizer of $PYP^{-1} \in S$.
It is not difficult to show that $N_G(T)$ is generated by elements
of this form. 
\end{proof}

We can now prove the main theorem. 

\begin{proof}[Proof of Theorem~\ref{thm:noncyclic1a}] 
Assume the notation of Theorem~\ref{thm:noncyclic1a}. 
Note that if $e=1$, $n=2$, $p>2$, and $Z$ does not contain a
Sylow $p$-subgroup of $Z(G)$, then a Sylow $p$-subgroup of 
$G$ is abelian of $p$-rank $2$, and $G/Z$ has a nontrivial normal
$p$-subgroup. 
Thus Theorem \ref{thm:noncyclic1a} holds by Corollary \ref{cor:torfree}
and Proposition \ref{prop:normal}.
Likewise when $n = p = 3$ divides $q-1$ and $Z$ does not contain a 
Sylow $3$-subgroup of $Z(G)$, then $G/Z$ has a nontrivial normal
$3$-subgroup and the conclusion of the theorem follows. If $3$ does not
divide $q-1$, then $n < 2e$ and the theorem does not apply.
In the rest of the proof, we assume that $n\geq 3$ and that if 
$n= 3$ then $p > 3$. 
 
Let $\whg = Z \cdot \SL(n,q)$ and $\whz = Z \cap \SL(n,q)$. Then 
$\whg/Z \cong \SL(n,q)/\whz$. We first prove the theorem
for $G = \whg$. There are two cases to consider. 

Assume first that $p$ does not divide the order of $\whz$. Then 
any indecomposable $k(\whg/Z)$-module with trivial Sylow restriction
inflates to a $k\SL(n,q)$-module with trivial Sylow restriction. By
Theorem \ref{thm:slgencase}, this must be the trivial module. 

Suppose that $p$ divides the order of $\whz$. Because any element
of $\whz$ is a scalar matrix, $p$ must divide $q-1$ and $n$. By 
hypothesis, if $p=2$, then $n >3$; while if $p=3$, then $n > 3$. 
In all cases $n \geq p$. Hence, by Proposition
\ref{prop:rp-case}, the trivial module is the unique indecomposable
$\whg/Z$-module with trivial Sylow restriction.

Next suppose that the index of $\whg$ in $G$ is a power of $p$,
so that $(G/Z)/(\whg/Z)$ is a $p$-group. In this case, $p$ 
divides $q-1$.  For convenience, let 
$K = G/Z$ and $J = \whg/Z$ so that $K/J$ is a $p$-group. Let 
$S$ be a Sylow $p$-subgroup of $K$ and $S^\prime = J \cap S$,
a Sylow $p$-subgroup of $J$. Recall that $ J \cong \SL(n,q)/\whz$.
We may assume that $S^\prime$ contains the image $T$ (modulo $Z$) of 
the torus of diagonal matrices of order $p$ in $\SL(n,q)$, and 
that $T$ is normal in $S$ and $S^\prime$.  
Thus, Proposition \ref{prop:gensl} says that $N_J(T)$ is generated 
by a collection of elements, each of which is in the commutator
subgroup of the normalizer of some 
nontrivial subgroup of $T$, which is not central in $G$, and therefore
cannot be contained in $Z$. 
By Theorem \ref{thm:method}, with $H = N_K(T) = SN_J(T)$, we conclude
that the trivial module is the unique indecomposable $kK$-module with
trivial Sylow restriction. 

Finally, suppose that there is a subgroup $H$ such that $\whg \subseteq
H \subseteq G$, and such that $H/\whg$ is a Sylow $p$-subgroup of
$G/\whg$. Note that by hypothesis, $H/\whg$ is nontrivial.
Theorem~\ref{thm:slgencase} and Proposition~\ref{prop:rp-case} show that
the trivial module is the unique indecomposable $k(H/Z)$-module with trivial
Sylow restriction. Note that the index of $H$ in $G$ is prime to $p$. Suppose
that $M$ is an $k(G/Z)$-module with trivial Sylow restriction. Then
\[ 
M_{\downarrow H/Z} \quad \cong \quad k \oplus \proj,
\]
implying that $M$ is a direct summand of $(k_{H/Z})^{\uparrow G/Z}$.
However, the restriction of $(k_{H/Z})^{\uparrow G/Z}$ to $H/Z$
is a direct sum of copies of $k$, since $H/Z$ is normal 
in $G/Z$ and has index coprime to $p$. 
Both conditions can only occur if $M$ has dimension one. 

We have shown that if $S$ is a Sylow $p$-subgroup of $G/Z$, then 
the kernel of the restriction map $T(G/Z) \to T(S)$ is $X(G/Z)$ the group of 
one-dimensional $k(G/Z)$-modules. The proof of Theorem 
\ref{thm:noncyclic1a} is completed using Corollary \ref{cor:torfree}.
\end{proof}


\section{Type $A_1$ in characteristic $p \geq 3$} 
\label{sec:gl2}
In the case that $n=2$ and $p$ is odd, 
the Sylow $p$-subgroup of a subquotient of
$\GL(2,q)$ can be cyclic, and so the structure 
of $(G/Z)$ changes accordingly. In this
section, we briefly discuss some cases that were not included
in the results  of \cite{CMN3} and are also excluded from
Theorem~\ref{thm:noncyclic1a} by condition ($b$)
of the hypothesis. The techniques are well known, so only a
sketch of the proof is given. As before, write
$\Det(H)$ for the image under the determinant map
of a subgroup $H$ of $G$ . 

\begin{thm} \label{thm:sl2case}
Assume that $p>2$ and that $p$ divides $q-1$. Suppose that 
$G$ is a group such that 
$\SL(2,q) \subseteq G \subseteq \GL(2,q)$. 
Let $Z\subseteq Z(G)$ be a central subgroup 
of $G$.  Then $|Z|$ divides $2\cdot |\!\Det(G)|$. 
\begin{itemize}
\item[(a)] If $p$ divides $|Z(G):Z|$, that is, 
if $Z$ does not contain the Sylow
  $p$-subgroup of $Z(G)$, then $T(G/Z) = X(G/Z) \oplus \bZ$. 
\item[(b)] Otherwise, $T(G)$ is an extension 
\[
\xymatrix{
0 \ar[r] & X(N_{G/Z}(S)) \ar[r] & T(G) \ar[r] & \bZ/2\bZ \ar[r] & 0
}
\]
\end{itemize}
where $S$ is a Sylow $p$-subgroup of $G/Z$ and right-hand map in 
the sequence is the restriction onto $T(S) \cong \bZ/2\bZ$.
\end{thm}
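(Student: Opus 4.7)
The divisibility claim $|Z| \mid 2|\Det(G)|$ will follow at once from the fact that every element of $Z$ is a scalar matrix $\lambda I \in G$, so that $\lambda^2 = \det(\lambda I) \in \Det(G)$; since the squaring map on $\bF_q^\times$ has kernel of order at most $2$, at most $2|\Det(G)|$ scalars qualify. For part (a), my plan is simply to invoke Theorem~\ref{thm:noncyclic1a}: the hypothesis that $Z$ does not contain the Sylow $p$-subgroup of $Z(G)$, together with $e=1$, $n=2$, and $p\geq 3$, is precisely condition (b) of that theorem, while conditions (a), (c), (d) are automatic or vacuous. The conclusion $T(G/Z)\cong X(G/Z)\oplus\bZ$ then follows directly.

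For part (b), the strategy is to reduce to the well-developed theory of endotrivial modules over finite groups with cyclic Sylow $p$-subgroup. The first step is to verify that a Sylow $p$-subgroup $S$ of $G/Z$ is cyclic. A Sylow $p$-subgroup $\tilde S$ of $G$ lies inside the diagonal torus and has the form $\{\operatorname{diag}(a,b) \in C_{p^t}\times C_{p^t} : ab \in \Det(G)_p\}$, where $p^t = (q-1)_p$. The Sylow $p$-subgroup of $Z(G)$ consists of scalars and embeds as the diagonal subgroup $\{\operatorname{diag}(\mu,\mu)\}$ of $\tilde S$. Since $Z$ contains $Z(G)_p$ by hypothesis, quotienting out this diagonal will collapse the (at most) rank-two torus to a cyclic group, forcing $S$ to be cyclic.

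Once cyclicity is in hand, the restriction map $T(G/Z)\to T(S)\cong \bZ/2\bZ$ is surjective, because $[\Omega k]$ restricts to the generator $[\Omega_S k]$ of $T(S)$. Its kernel is the group of classes of indecomposable endotrivial $k(G/Z)$-modules with trivial Sylow restriction, that is, trivial-source endotrivials. Green correspondence identifies these bijectively with trivial-source endotrivial $kN_{G/Z}(S)$-modules, and because $S \trianglelefteq N_{G/Z}(S)$ with $N_{G/Z}(S)/S$ of order prime to $p$, every such indecomposable module is one-dimensional. This identifies the kernel with $X(N_{G/Z}(S))$ and produces the stated short exact sequence.

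The main obstacle I anticipate is the careful verification of the cyclicity of $S$ in part (b): the structure of $\tilde S \cap Z$ depends on how $\Det(G)$ interacts with the squaring map on $\bF_q^\times$, and the extreme cases (for instance $G = \SL(2,q)$ versus $G = \GL(2,q)$, and small versus large $Z$) must each be checked so that the resulting cyclic Sylow has the expected order $p^t$. The remainder of the argument, surjectivity of restriction and the Green-correspondence identification of the kernel, is a standard package for groups with cyclic Sylow $p$-subgroup.
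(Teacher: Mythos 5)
Your handling of the divisibility claim and of part (a) matches the paper: the scalar-matrix/squaring argument is exactly the paper's, and part (a) is indeed just the case of Theorem~\ref{thm:noncyclic1a} singled out by its hypothesis (b). Your explicit check that in case (b) the Sylow $p$-subgroup of $G/Z$ is cyclic of order $p^t$ (isomorphic to a Sylow $p$-subgroup of $\SL(2,q)$) correctly fills in a step the paper only asserts, and the surjectivity of $T(G/Z)\to T(S)$ via $\Omega(k)$ is fine.

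The gap is in part (b), at the sentence claiming that ``Green correspondence identifies these bijectively with trivial-source endotrivial $kN_{G/Z}(S)$-modules.'' Green correspondence only gives an \emph{injection} of the kernel of $T(G/Z)\to T(S)$ into $X(N_{G/Z}(S))$: the correspondent of an indecomposable module with trivial Sylow restriction is one-dimensional because $S$ is normal in $N_{G/Z}(S)$. The converse direction --- that the Green correspondent in $G/Z$ of \emph{every} one-dimensional $kN_{G/Z}(S)$-module is endotrivial --- is not a formal consequence of Green correspondence and is false for general groups; the paper's own Section~\ref{sec:sl32} gives an instance where $N_J(S)=S\times C_3$, so $X(N_J(S))\cong\bZ/3\bZ$, yet the kernel is trivial, i.e.\ the correspondents of the nontrivial characters are trivial-source modules of dimension bigger than one that are not endotrivial. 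What makes the identification valid in the present situation is an input your proposal never uses: here $S$ lies in the image of a split maximal torus and is a trivial-intersection subgroup of $G/Z$. One can then either argue directly by Mackey (for a one-dimensional $kN_{G/Z}(S)$-module $\chi$, the restriction of $\chi^{\uparrow G/Z}$ to $S$ is $k\oplus\proj$ because $S\cap{}^gS=\{1\}$ for $g\notin N_{G/Z}(S)$), or, as the paper does, note that the TI property yields a stable equivalence between $k(G/Z)$ and $kN_{G/Z}(S)$, so $T(G/Z)\cong T(N_{G/Z}(S))$, and then quote \cite[Theorems 3.2 and 3.6]{MT} for the stated extension. Your closing appeal to ``a standard package for groups with cyclic Sylow $p$-subgroup'' is in effect an appeal to those theorems of Mazza--Th\'evenaz, but as written the surjectivity is attributed to Green correspondence alone, and that step, as stated, does not hold.
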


\begin{proof}
The subgroup $Z$ consists of scalar matrices. 
Let $I_2$ denote the identity matrix in $G$.
If $aI_2\in Z$, then
$a^2\in\Det(G)$. 
It follows that $|Z|$ divides $2|\!\Det(G)|$ as asserted.

A Sylow $p$-subgroup $S$ of $G/Z$ is cyclic if and only if $Z$ contains
the Sylow $p$-subgroup of $Z(G)$. In this case $S$ is isomorphic to a
Sylow $p$-subgroup of $\SL(2,q)$ and $S$ is a TI subgroup of $G/Z$,
implying that the stable categories of $G/Z$ and $N_{G/Z}(S)$ are equivalent.
Part (b) of the theorem follows from \cite[Theorems~3.2 and~3.6]{MT}. 

Otherwise, i.e. if $Z$ does not contains the Sylow $p$-subgroup of
$Z(G)$, then $S$ is not cyclic and Theorem \ref{thm:noncyclic1a}
applies, proving part (a) of the theorem.
\end{proof}


\section{Type $A_2$ in Characteristic 3}\label{sec:char3}

In this section we consider the endotrivial modules for the groups
excluded by condition (c) of the hypothesis of Theorem 
\ref{thm:noncyclic1a}.  Throughout the section we assume the 
following.  Let $n= p =3$, and let 
$q$ be a prime power such that $3$ divides $q-1$
(i.e., $e=1$). Let $\SL(3,q)\subseteq G\subseteq\GL(3,q)$ and $Z$ a
central subgroup of $G$ containing the Sylow $3$-subgroup of the center
$Z(G)$ of $G$.

Note that if either $n=p=3$ does not divide $q-1$ or if $n=2<3=p$ and
$Z$ contains the Sylow $3$-subgroup of the center $Z(G)$ of $G$, then a
Sylow $3$-subgroup of $G/Z$ is cyclic and therefore $T(G/Z)$ is known by
Theorems \ref{thm:cyclic} and \ref{thm:sl2case}. 

\begin{lemma} \label{lem:3structure1}
The group $G/Z$ decomposes as a direct product $G/Z \cong H \times V$
where $Z\cdot \SL(3,q)/Z \subset H$ has index a power of $3$ in $H$ and
$3$ does not divide the order of $V$. 
In particular, $T(G/Z) \cong T(H) \oplus X(G/Z)$, 
where $X(G/Z) \cong X(V)$ is the group of one-dimensional $kV$-modules.
\end{lemma}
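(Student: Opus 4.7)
The plan is to realize the direct factor $V$ concretely as (the image in $G/Z$ of) the $3'$-part of the group of scalar matrices in $G$, and to take $H$ as the preimage in $G/Z$ of the Sylow $3$-subgroup of the abelianization $G/(Z\cdot\SL(3,q))$. The existence of the decomposition will then follow from the centrality of scalar matrices together with the fact that $\bfq^\times$ is cyclic, so that its 3-part and 3-complement split as a direct product.

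More precisely, first I would use that $\SL(3,q)$ is perfect (valid here since $q\geq 4$) to identify the abelianization:
\[
(G/Z)\big/(Z\cdot\SL(3,q)/Z)\;\cong\;G/(Z\cdot\SL(3,q))\;\cong\;\Det(G)/\Det(Z).
\]
This quotient is cyclic, so it splits canonically as $\bar W\times \bar V$, where $\bar W$ is the Sylow $3$-subgroup and $\bar V$ the $3$-complement. Writing $\bfq^\times=W\times V_0$ for its $3$-part and $3$-complement and setting $V_1:=V_0\cap\Det(G)$, I would consider
\[
Z_0\;:=\;\{aI_3\ :\ a\in V_1\}\;\subseteq\;Z(G),
\]
which is the $3'$-part of the group of scalar matrices lying in $G$. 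I then define $V:=Z_0Z/Z\subseteq G/Z$ and $H$ to be the preimage in $G/Z$ of $\bar W$. By construction $V$ is central in $G/Z$, of order prime to $3$, and $H$ contains $Z\cdot \SL(3,q)/Z$ with index $|\bar W|$, a power of $3$.

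Next I would verify that $G/Z=H\times V$ is genuinely a direct product. Centrality of $V$ is automatic. Using that cubing is a bijection on $V_0$ (and hence on $V_1$), a direct order count shows that the natural map $V\to\Det(G)/\Det(Z)$ sends $aIZ$ to $a^3\Det(Z)$ and identifies $V$ with $\bar V$. Hence $V\cap A=\{1\}$ where $A=Z\cdot\SL(3,q)/Z$, and since $H/A\cong \bar W$ meets the image $\bar V$ trivially, $V\cap H=\{1\}$. Surjectivity of $H\cdot V\to G/Z$ follows from the decomposition $(G/Z)/A=\bar W\times\bar V$. The main obstacle in this step is the bookkeeping of cube roots to confirm $V\cong \bar V$; everything else is formal.

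Finally, for the identification of $T(G/Z)$, I would invoke the standard fact that for a direct product $K=H\times V$ with $\gcd(|V|,p)=1$, a Sylow $p$-subgroup of $K$ lies entirely inside $H$ and the tensor product induces an isomorphism $T(H)\oplus X(V)\xrightarrow{\sim}T(K)$. Since $H/[H,H]$ is a quotient of $H/A\cong \bar W$, which is a $3$-group, and since $k^\times$ has no $3$-torsion (as $k$ has characteristic $3$), we obtain $X(H)=0$, hence $X(G/Z)=X(H)\oplus X(V)=X(V)$. Combining these two statements yields $T(G/Z)\cong T(H)\oplus X(G/Z)$ with $X(G/Z)\cong X(V)$, as claimed.
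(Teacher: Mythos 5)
Your proposal is correct and takes essentially the same route as the paper: your $V=Z_0Z/Z$ and your $H$ (the preimage of the Sylow $3$-subgroup of $\Det(G)/\Det(Z)$) are exactly the subgroups the paper constructs, and your verification that $aI_3Z\mapsto a^3\Det(Z)$ identifies $V$ with the $3$-complement $\bar V$ is precisely the paper's observation that cubing is an automorphism of a $3'$-group, recast as a direct internal-direct-product check. The only cosmetic divergence is the final step, where you invoke the outer-tensor/block decomposition $T(H\times V)\cong T(H)\oplus X(V)$ together with $X(H)=0$, while the paper's proof of this lemma instead computes the kernel of the restriction $T(G/Z)\to T(H)$ via the summands of $k_H^{\uparrow G/Z}$; both are standard, and the paper itself uses your version of the argument in the proof of Proposition~\ref{prop:3structure2}.
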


\begin{proof}
Since $\Det(G)\subseteq\bF_q^\times$ is an abelian group, we can
write $\Det(G) = U^\prime \times V^\prime$ and 
$\Det(Z) =U^{\prime\prime} \times V^{\prime\prime}$ 
where $U^\prime$, $U^{\prime\prime}$ are $3$-groups and 
$V^\prime$ and $V^{\prime\prime}$ are $3^\prime$-groups.
Let $V = V^\prime/V^{\prime\prime}$. Since $U''\subseteq U'$, we have 
$V \cong \Det(G)/(U'\cdot \Det(Z))$.
Consider the group homomorphism $\psi: V \to G/Z$ 
defined by $\psi(a)=aI_3Z\in G/Z$ for each class 
$a\in V\subseteq\bF_q^\times/U'\Det(G)$. Consider also the homomorphism 
$\vartheta:G/Z\to V$, given as the composition of the induced
determinant map on the quotient group, 
i.e., $\Det(xZ)=\Det(x)\Det(Z)\in\Det(G)/\Det(Z)$  for all $x\in G$, with
the quotient onto $\Det(G)/U'\Det(Z)\cong V$. 
We have $\vartheta\psi(a)=a^3$, which is an automorphism of $V$ because
$V$ is a $3'$-group. Since $\psi(V)$ is in the center of $G/Z$, we
conclude that $V$ is a direct factor of $G/Z$. So the first part of
the claim holds with $H$ the kernel of $\vartheta$.

For the last part of the statement, we observe that the
kernel of the restriction map $T(G/Z)\to T(H)$ is generated by the
isomorphism classes of indecomposable modules in the induction
$k_H^{\uparrow G/Z}$ of the trivial $kH$-module to $G/Z$. Since the
index $|G/Z:H|=|V|$ is not divisible by $3$ and the factor group $V$
is abelian, the induced module $k_H^{\uparrow G/Z}$ is a direct sum of
one-dimensional modules on which $H$ acts trivially. Therefore, the
kernel of the restriction map $T(G/Z)\to T(H)$ is isomorphic to
$X(V)\cong X(G/H)$ as required.
\end{proof}

The next result provides a description of $H$ and $V$ under our assumptions. 

\begin{prop} \label{prop:3structure2}
For $G$ and $Z$ as above, one 
of the two situations occurs. 
\begin{itemize}
\item[(a)] If $3$ does not divide $(q-1)/|\Det(G)|$, i.e., if $Z$ contains
  the Sylow $3$-subgroup of $Z(\GL(3,q))$, then 
$G/Z \cong \PGL(3,q) \times V$ where $V \cong \Det(G)/\Det(Z)$.
\item[(b)] Otherwise, $G/Z \cong \PSL(3,q) \times V$ where $V$ is the 
$3$-complement in $\Det(G)/\Det(Z)$.
\end{itemize}
In both cases, $T(G/Z) \cong T(H)\oplus X(G/Z)$, where 
$X(G/Z) \cong X(V)$, the group of one-dimensional $kV$-modules
and $H$ is either $\PGL(3,q)$ or $\PSL(3,q)$ as appropriate.  
\end{prop}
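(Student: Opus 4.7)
The plan is to derive the proposition from Lemma~\ref{lem:3structure1} together with an explicit identification of the nonabelian factor in the decomposition $G/Z\cong H\times V$ afforded by that lemma. By Lemma~\ref{lem:3structure1}, $V$ is a group of order coprime to $3$ and $H$ contains $\bar L:=Z\cdot\SL(3,q)/Z$ as a subgroup of index a power of $3$. Since $Z$ consists of scalar matrices and contains the Sylow $3$-subgroup of $Z(G)$, it contains the cyclic group $Z(\SL(3,q))$ of order $3$ (here using $e=1$ and $n=p=3$); therefore $Z\cap\SL(3,q)=Z(\SL(3,q))$, giving $\bar L\cong\PSL(3,q)$.

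Next I would use the explicit description $\hat H=\{g\in G:\Det(g)\in\Det(G)_3\cdot\Det(Z)\}$ from the proof of Lemma~\ref{lem:3structure1} to compute
\[
[H:\PSL(3,q)] \;=\; |\Det(G)_3|/|\Det(Z)_3|.
\]
In case (b), where $|\Det(G)_3|<(q-1)_3$, a direct analysis of the cubing map on scalars shows that $Z(G)_3$ maps onto $\Det(G)_3$ under $\Det$. Combined with the hypothesis $Z\supseteq Z(G)_3$, this yields $\Det(Z)_3=\Det(G)_3$, so $[H:\PSL(3,q)]=1$ and $H=\PSL(3,q)$. In case (a), where $\Det(G)_3=(\bfq^\times)_3$, the same analysis gives $|\Det(Z)_3|=(q-1)_3/3$, hence $[H:\PSL(3,q)]=3$.

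To identify $H$ with $\PGL(3,q)$ in case (a), I would argue that the condition $\Det(G)_3=(\bfq^\times)_3$ implies $G\cdot Z(\GL(3,q))=\GL(3,q)$, so the natural projection $\GL(3,q)\to\PGL(3,q)$ restricts to a surjection $\bar\pi\colon G/Z\twoheadrightarrow\PGL(3,q)$ with kernel $Z(G)/Z$. Under the hypothesis $Z\supseteq Z(G)_3$, this kernel is a $3'$-group, and I would show that it coincides with the direct factor $V$ in the Lemma decomposition. Consequently $\bar\pi$ restricts to an isomorphism $H\to\PGL(3,q)$, and the statement on $T(G/Z)$ is then the conclusion of Lemma~\ref{lem:3structure1} applied with these identifications.

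The most delicate step is the matching, in case (a), of the abstract factor $V$ produced by Lemma~\ref{lem:3structure1} with the concrete central subgroup $Z(G)/Z$ of $G/Z$. This requires careful bookkeeping of the $3$-parts of $\Det(G)$ and $\Det(Z)$ and of the cubing map on the Sylow $3$-subgroup of $\bfq^\times$; the index count from the previous step serves to confirm that $\bar\pi|_H$ is an isomorphism onto $\PGL(3,q)$.
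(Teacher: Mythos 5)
Your proposal is correct, and it is built on the same foundation as the paper's proof (Lemma~\ref{lem:3structure1} plus bookkeeping of determinants of scalar matrices), but the way you pin down $H$ differs in both cases. In case (b) the paper argues directly: after passing to the factor $H$ it exhibits a scalar $\gamma I_3$ lying in $Z$ whose determinant generates the $3$-part of $\Det(G)$, which forces $Z\cdot\SL(3,q)=G$ there and hence $G/Z\cong\PSL(3,q)$; you instead compute $[H:\overline{L}]=|\Det(G)_3|/|\Det(Z)_3|$ and show it equals $1$ via surjectivity of $\Det$ on $Z(G)_3$ --- the same underlying fact, repackaged as an index count. In case (a) the paper merely asserts $G/Z\cong\PGL(3,q)$, whereas you supply a genuine argument: $G\cdot Z(\GL(3,q))=\GL(3,q)$ gives a surjection $\bar\pi\colon G/Z\to\PGL(3,q)$ with central $3'$-kernel $Z(G)/Z$, and your index computation $[H:\overline{L}]=3$ finishes it; this is a real gain in detail over the paper. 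Note that the step you flag as most delicate (matching $V$ with $Z(G)/Z$) is not actually needed: since $\overline{L}\cong\PSL(3,q)$ is simple, $\bar\pi(H)$ is a normal subgroup of $\PGL(3,q)$ containing $\PSL(3,q)$, so $|H\cap\ker\bar\pi|$ is $1$ or $3$, and the $3'$-ness of the kernel forces it to be $1$, whence $|H|=3\,|\PSL(3,q)|=|\PGL(3,q)|$ makes $\bar\pi|_H$ an isomorphism; alternatively, the factor $V$ constructed in Lemma~\ref{lem:3structure1} consists of classes of scalar matrices, so $V\subseteq Z(G)/Z$ and equality follows by comparing orders. Finally, your computation $|\Det(Z)_3|=\tfrac{1}{3}(q-1)_3$ in case (a) shows that the factor produced by the lemma is the $3$-complement of $\Det(G)/\Det(Z)$ rather than $\Det(G)/\Det(Z)$ itself (whose $3$-part has order $3$); this matches the phrasing used in Theorem~\ref{thm:noncyclic3}, is harmless for $X(V)$ in characteristic $3$, and is a point where your bookkeeping is in fact more accurate than the literal statement of part (a).
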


\begin{proof}
Suppose that $3$ does not divide $(q-1)/|\Det(G)|$.
By Lemma \ref{lem:3structure1} and its proof, we may assume that 
$\Det(G) = G/\SL(3,q)$ is a $3$-group. In the case that 
$\Det(G)$ is a Sylow $3$-subgroup of $\bfq^\times$, we must 
have $G/Z \cong \PGL(3,q)$, which proves (a). 

Otherwise, $\Det(G)$ is
not a Sylow $3$-subgroup of $\bfq^\times$, and so there exists an
element $\gamma \in \bfq^\times$, $\gamma \not\in \Det(G)$ such that $\gamma^3$
is in $\Det(G)$. Then the 
scalar matrix $X = \gamma I_3$ is an element of $G$ with the 
property that $\Det(X)$ generates $\Det(G)$, because $\bfq^\times$ is a
cyclic group. Since $Z$ contains the Sylow $3$-subgroup of $\Det(G)$, it
follows that $X \in Z$, and $Z\cdot\SL(3,q) = G$.
Hence, 
\[
G/Z \cong Z\cdot \SL(3,q)/Z \cong \SL(3,q)/(Z \cap \SL(3,q)) \cong \PSL(3,q),
\]
which proves (b).

The last statement, about the group of endotrivial modules, follows
because a complete set of nonisomorphic simple $kV$-modules all have
dimension one and define different blocks of $k(G/Z)$. 
Thus, any indecomposable endotrivial $k(G/Z)$-module is the (outer) 
tensor product of a one-dimensional $kV$-module and an
indecomposable endotrivial $kH$-module. Finally it should be 
noted that $H$ has a nontrivial normal $3$-subgroup, since, by
construction, $H$ is an extension of $Z\cdot \SL(3,q)/Z$
by a nontrivial $3$-group. Thus, $X(H)$ is trivial. 
\end{proof}

By Theorem \ref{thm:rank}, the torsion free rank of $T(G)$ is 
related to the number of conjugacy classes of maximal elementary 
abelian $3$-subgroups of rank $2$. The following calculation is 
important to determine $TF(G)$. 

\begin{prop} \label{prop:3structure3}
Let $G$ and $Z$ be as above.
The group $G/Z$ has $3$-rank $2$. In addition,
\begin{itemize}
\item[(a)] The group $\PGL(3,q)$ has three conjugacy 
classes of maximal elementary abelian $3$-subgroups.
\item[(b)] If $q \equiv 1 \pmod 9$ then $\PSL(3,q)$ has four conjugacy
classes of maximal elementary abelian $3$-subgroups. 
\item[(c)] If $q \equiv 4, 7 \pmod 9$ then a Sylow $3$-subgroup of 
$\PSL(3,q)$ is elementary abelian of order $9$.
\end{itemize}
\end{prop}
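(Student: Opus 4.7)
The plan is to analyze the $3$-local structure of $\SL(3,q)$ and $\GL(3,q)$ explicitly and then descend to $\PSL(3,q)$ and $\PGL(3,q)$; Proposition~\ref{prop:3structure2} then transfers the results to $G/Z$. Throughout let $\zeta \in \bfq^\times$ generate the Sylow $3$-subgroup (of order $3^t$), $\omega = \zeta^{3^{t-1}}$ a primitive cube root of unity, and $P$ a $3$-cycle permutation matrix. The $3$-rank claim follows from Proposition~\ref{prop:sylow1}(c): $\GL(3,q)$ and $\SL(3,q)$ have $3$-ranks $3$ and $2$ respectively. Modding $\GL$ by its center kills a rank-one subgroup of the unique rank-$3$ elementary abelian $3$-subgroup (namely $\Omega_1$ of the diagonal torus), giving $\PGL(3,q)$ rank $2$. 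For $\PSL(3,q)$, the image of the extraspecial $H = \langle \mathrm{diag}(1,\omega,\omega^2), P\rangle$ (order $27$ with center $\langle\omega I\rangle = Z(\SL)$) is elementary abelian of rank $2$, matched by the upper bound coming from the Sylow $3$-subgroup of $\SL$.

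For part~(a), I would enumerate maximal elementary abelian rank-$2$ subgroups $\bar E \leq \PGL(3,q)$ via their preimages in $\GL(3,q)$. The Sylow $3$-part $E$ of such a preimage is a central extension of $(C_3)^2$ by $Z(\GL)_3 \cong C_{3^t}$, and up to $\GL$-conjugacy three types occur: (i) abelian, with $E \supseteq \Omega_1(T_3) \cong (C_3)^3$ in the diagonal torus; (ii) extraspecial $3^{1+2}_+$ of exponent $3$, realized by $H$; and (iii) extraspecial $3^{1+2}_-$ of exponent $9$, realized by an order-$9$ element $g$ with $g^3 \in Z(\GL)$ and a conjugating element satisfying $P g P^{-1} = \omega^{\pm 1} g$ -- one can take $g = \zeta_9 \cdot \mathrm{diag}(1,\omega,\omega^2)$ with $\zeta_9^3 = \omega$ when $9 \mid q-1$, or an order-$9$ element of the Singer torus $\bF_{q^3}^\times$ together with the Frobenius otherwise. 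Since $3^{1+2}_+ \not\cong 3^{1+2}_-$ and neither is toral, the three images in $\PGL$ are pairwise non-conjugate, giving three classes.

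For part~(b), with $q \equiv 1 \pmod 9$ (so $t \geq 2$), the analogous analysis in $\SL(3,q)/Z(\SL)$ looks at preimages of order $27$ containing $Z(\SL) = \langle\omega I\rangle$ centrally. A direct determinant check rules out $3^{1+2}_-$ in $\SL$: any $g \in \SL$ with $g^3 = \omega I$ must have two equal eigenvalues, which obstructs the needed commutator relation. The remaining preimages are either $3^{1+2}_+$ (one class) or abelian of type $C_9 \times C_3$ with $Z(\SL)$ in the $C_9$ factor. These abelian preimages are parametrized by pairs $(a, b)$ with $a$ of order $9$ having eigenvalue multiplicity $(2,1)$ and $b$ of order $3$ in the Levi centralizer $C_\SL(a) \cong (\GL(1)\times\GL(2)) \cap \SL$; a careful enumeration taking into account both the $A_3$-action (rather than the full $\fS_3$) on eigenvalue positions and the $Z(\SL)$-shift yields three $\SL$-conjugacy classes, for a total of four $\PSL$-classes. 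For part~(c), with $q \equiv 4, 7 \pmod 9$, the order computation $|\SL(3,q)|_3 = (q-1)_3^2 \cdot (q^2+q+1)_3 = 3 \cdot 3 \cdot 3 = 27$ (using $(q+1)_3 = 1$ since $q \equiv 1 \pmod 3$) combined with the identification of the Sylow as $H \cong 3^{1+2}_+$ immediately yields a Sylow of $\PSL(3,q)$ of order $9$ and exponent $3$. The main technical obstacle is the abelian-preimage enumeration in~(b): distinguishing $\SL$- from $\GL$-conjugacy is delicate because the relevant Weyl action is by $A_3$ rather than $\fS_3$, and simultaneously one must track the $Z(\SL)$-coset structure of the $C_9$ generator.
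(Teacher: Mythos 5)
Your part (c) and the $3$-rank computation are fine and essentially match the paper, but parts (a) and (b) have genuine gaps. For (a), your argument only shows that your three explicit subgroups are pairwise non-conjugate (a lower bound); the substantive point --- that \emph{every} maximal elementary abelian subgroup of order $9$ in $\PGL(3,q)$ is conjugate to one of them, i.e.\ that each isomorphism type of preimage constitutes a \emph{single} $\PGL(3,q)$-class --- is asserted, not proved. This is not a formality: conjugacy is not determined by the isomorphism type of the preimage, and indeed in part (b) a single $\GL(3,q)$-conjugacy class of extraspecial preimages breaks into three $\PSL(3,q)$-classes. The paper instead computes directly in the Sylow subgroup $S/Z(S)$ with explicit generators $x_1,x_2,y$: it finds four $S/Z(S)$-classes (the toral one together with $\langle z,y\rangle$, $\langle z,x_1y\rangle$, $\langle z,x_1y^2\rangle$) and then exhibits the fusion of the last two in $\PGL(3,q)$ via an explicit transposition matrix. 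To close your version you would need, for each type, a transitivity argument (e.g.\ uniqueness of the faithful $3$-dimensional representation of $3^{1+2}_+$ with given central character, plus a normalizer/determinant analysis, plus the fact that abelian preimages are forced into a split torus).

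The more serious problem is (b), where your proposed structure of the four classes is wrong. For $q\equiv 1\pmod 9$, the abelian preimages give exactly \emph{one} $\PSL(3,q)$-class, not three: an abelian preimage of order $27$ consists of commuting semisimple elements whose eigenvalues lie in $\bfq$ (their orders divide $9$ and $9\mid q-1$), so it is conjugate into the diagonal torus, and inside the torus it must equal the canonical subgroup $\{x\in (T\cap\SL(3,q))_3 : x^3\in Z\}\cong C_9\times C_3$, which is the unique such subgroup; moreover $N_{\SL(3,q)}(T)$ induces the full Weyl group $\fS_3$ (monomial matrices with a determinant adjustment), so your concern about an $A_3$-action is moot. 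Consequently the remaining three classes all have extraspecial $3^{1+2}_+$ preimages, which are mutually conjugate in $\GL(3,q)$, and the entire difficulty of (b) is to show that these three classes do \emph{not} fuse in $\PSL(3,q)$. Your proposal never addresses this (your miscount implicitly treats them as one class), and the ``careful enumeration'' you defer is exactly where the work lies. The paper resolves this by invoking the Diaz--Ruiz--Viruel classification of fusion systems on the Blackburn group $B(3,2t;0,0,0)$, which shows that the three classes of $3$-centric radical elementary abelian subgroups of order $9$ and the one non-centric-radical (toral) class are each stable under fusion. Your determinant argument excluding $3^{1+2}_-$ from $\SL(3,q)$ is correct, but as written (b) is both miscounted and unproved.
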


\begin{proof}
Write $q-1=3^td$ where $3$ does not divide $d$, and suppose that
$|Z|\geq3$.  A Sylow $3$-subgroup $S$ of $G = \GL(3,q)$ is generated
by elements 
\[
X_1=\begin{bmatrix}\zeta&&\\&1&\\&&1\end{bmatrix},
\quad
X_2=\begin{bmatrix}1&&\\&\zeta&\\&&1\end{bmatrix},
\quad
X_3=\begin{bmatrix}1&&\\&1&\\&&\zeta\end{bmatrix},
\quad
Y=\begin{bmatrix}&1&\\&&1\\1&&\end{bmatrix},
\]
where $\zeta$ is a primitive $3^t$ root of unity in $\bfq$.
Let $x_1, x_2, x_3$ and $y$ denote the images of 
$X_1, X_2, X_3$ and $Y$ (respectively) in $G = \PGL(3,q)$.
Note that $S \cap Z(\GL(3,q)) = Z(S) = \langle X_1X_2X_3 \rangle$.

Thus a Sylow $3$-subgroup of $G$ has a presentation
\[
S/Z(S) = \langle x_1,x_2,y~|~x_i^{3^t}=
y^3=1~,~\ls{y}{x_1}=x_1^{-1}x_2^{-1}~,
~\ls{y}{x_2}=x_1\rangle\cong(C_{3^t}\times
C_{3^t})\rtimes C_3
\]
The only central subgroup of order $3$ in $S/Z(S)$ is generated
by the element $x_1^rx_2^{-r}$ for $r = 3^{t-1}$. 

The $3$-group $S/Z(S)$ has rank $2$ and each noncyclic elementary
abelian subgroup has the form $\langle  z, x\rangle$ for some
noncentral element $x\in S/Z(S)$ of order $3$. Note that 
$(x_i^jy)^3=1$ for $i=1,2$ and any $0\leq j<3^t$.
Moreover the unique subgroup $C_3\times C_3$ in the normal subgroup
$C_{3^t}\times C_{3^t}$ of $S$ generated by $x_1$ and $x_2$ is 
characteristic in $S$. The other
maximal elementary abelian subgroups have the form
 $\langle z,x\rangle$ with
$x\not\in\langle x_1,x_2 \rangle$. All such elements
$x$ have order $3$. A routine calculation shows that there are
three $S$-conjugacy classes of these, namely
\[
\langle z, y\rangle, \ \ \ \ \   \langle z,x_1y\rangle\qbox{and}
\langle z,x_1y^2\rangle
\]
This is determined, for example, by looking at the 
monomial matrices obtained by conjugating $X_1Y$, and it gives 
us a total of four $S/Z(S)$-conjugacy classes of elementary abelian
subgroups of $S/Z(S)$ of order $9$.

In $\GL(3,q)$, with the above elements $X_1,X_2,X_3,Y$ we get
that
\[
X_1Y=\begin{bmatrix}&\zeta&\\&&1\\1&&\end{bmatrix}=\ls T{(X_1Y^2)}
\qbox{where}
T=\begin{bmatrix}1&&\\&&1\\&1&\end{bmatrix}.
\]
Thus $\langle Z,X_1Y\rangle$ is $G$-conjugate to
$\langle Z, X_1Y^2 \rangle$.  The same holds in $G=\PGL(3,q)$, 
that is, there are exactly three
$G$-conjugacy classes of $C_3\times C_3$.  This finishes the proof of (a)

For (b), we refer the reader to the results in \cite{DRV},
where $S$ is a $3$-group studied by N. Blackburn and denoted
$B(3,2t;0,0,0)$. Then the $3$-fusion system defined by $\PSL(3,q)$ on
$S$ stabilizes the three conjugacy classes of $3$-centric radical
elementary abelian subgroups of order $9$ of $S$, and there is a single
conjugacy class of elementary abelian subgroups of order $9$ that are
not $3$-centric radical in $S$. As a consequence, no two of the four conjugacy
classes of elementary abelian subgroups of $S$ of order $9$ fuse
in $G/Z$ (cf.~\cite[Theorem~5.10 and Tables 2 and 4]{DRV}). 

Finally, (c) is immediate from the observations that $9$ is the highest
power of $3$ wich divides $|\PSL(3,q)|$ for $q-1\equiv3\pmod9$ and that
$\PSL(3,q)$ has no element of order $9$.
\end{proof}

The following is the main result of the section.

\begin{thm} \label{thm:noncyclic3}
Suppose that $n= p =3$, and that $q$ a prime power such that $3$ 
divides $q-1$ (i.e., $e=1$). Let 
$\SL(3,q)\subseteq G\subseteq\GL(3,q)$ and $Z$ a
central subgroup of $G$ containing the 
Sylow $3$-subgroup of the center
$Z(G)$ of $G$.  Then the following hold.
\begin{itemize}
\item[(a)] If $3$ does not divide $(q-1)/|\Det(G)|$ 
then $T(G/Z) \cong {\bZ}^3\oplus X(G/Z)$.
\item[(b)] If $q \equiv 1 \pmod 9$ and if $3$ divides $(q-1)/|\Det(G)|$  
then $T(G/Z) \cong {\bZ}^4\oplus X(G/Z)$.
\item[(c)] If $q\equiv 4,7\pmod 9$ and if $3$ divides $(q-1)/|\Det(G)|$ 
then $T(G/Z)\cong {\bZ}\oplus{\bZ}/2{\bZ}\oplus 
{\bZ}/2{\bZ} \oplus X(G/Z)$.
\end{itemize}
In every case, $X(G/Z) \cong X(V)$ is the character
group of $V$, the normal $3$-complement in the cyclic group
$\Det(G)/\Det(Z)$. 
\end{thm}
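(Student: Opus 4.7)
The starting point is Proposition~\ref{prop:3structure2}, which gives $T(G/Z)\cong T(H)\oplus X(G/Z)$ with $H=\PGL(3,q)$ in case (a) and $H=\PSL(3,q)$ in cases (b) and (c). Since $\PSL(3,q)$ is simple and the abelianization of $\PGL(3,q)$ is a cyclic $3$-group, and since $k^{\times}$ has no nontrivial $3$-torsion in characteristic~$3$, we have $X(H)=0$. Hence $T(H)\cong TF(H)\oplus TT(H)$, and it suffices to determine each summand separately in each of the three cases.

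For the torsion-free part, Proposition~\ref{prop:3structure3} shows that the $3$-rank of $H$ is exactly $2$ in every case, so by Theorem~\ref{thm:rank} the $\bZ$-rank of $TF(H)$ equals the number $n_H$ of $H$-conjugacy classes of maximal elementary abelian $3$-subgroups of order $9$. Proposition~\ref{prop:3structure3} gives $n_H=3,4,1$ in cases (a), (b), (c) respectively (in case (c) the Sylow itself is the unique such subgroup), matching the asserted $\bZ^{3}$, $\bZ^{4}$, $\bZ$ factors.

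For cases (a) and (b), the plan is to prove that the only indecomposable $kH$-module with trivial Sylow restriction is the trivial module, forcing $TT(H)=0$ since $X(H)=0$. The method is Theorem~\ref{thm:method} applied to $H$ itself, taking the auxiliary subgroup containing the Sylow normalizer to be the image in $H$ of a maximal parabolic of $\GL(3,q)$ or $\SL(3,q)$. The remaining generators of $N_H(S)$, such as the classes of the matrices $X$ and $Y$ appearing in the proof of Proposition~\ref{prop:3structure3}, can then be placed in the commutator subgroup of the image of a second Levi-type subgroup of $H$ by direct matrix calculation, parallel to the arguments of Proposition~\ref{prop:composite}(i) and Proposition~\ref{prop:rp-case} transposed to the quotient group $H$.

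The main obstacle is case (c), where $H=\PSL(3,q)$ has elementary abelian Sylow $3$-subgroup $S\cong C_3\times C_3$; here genuine exotic trivial source endotrivial modules exist and the method of Theorem~\ref{thm:method} is no longer sufficient. To handle this, I would invoke the more sophisticated variant of the method developed in \cite{CT4}, together with the fusion-system analysis of \cite{DRV} invoked in Proposition~\ref{prop:3structure3}(c), which describes how the Sylow normalizer acts on $S$ as a subgroup of $\GL(2,3)$ compatible with the fusion in $\PSL(3,q)$. This analysis should classify the trivial source endotrivial $kH$-modules as two independent involutions in $T(H)$, yielding $TT(H)\cong\bZ/2\bZ\oplus\bZ/2\bZ$; combined with the torsion-free part $\bZ$ and with the factor $X(G/Z)$ this produces the asserted decomposition, completing case (c).
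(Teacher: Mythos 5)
Your overall architecture matches the paper's: Proposition~\ref{prop:3structure2} splits off $X(G/Z)$, Proposition~\ref{prop:3structure3} together with Theorem~\ref{thm:rank} gives the torsion-free ranks $3,4,1$ in cases (a), (b), (c), and case (c) is settled by appealing to \cite{CT4} (the paper simply cites \cite[Theorem~8.4]{CT4}, which gives the kernel $\bZ/2\bZ\oplus\bZ/2\bZ$ directly; your appeal to the same source, though phrased as ``should classify,'' is the intended argument, and since $TT(S)=0$ for $S$ noncyclic with $p$ odd, identifying the kernel with $TT$ is legitimate).

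The genuine problem is the main step in cases (a) and (b). You propose to apply Theorem~\ref{thm:method} with the subgroup containing the Sylow normalizer taken to be the image in $H$ of a maximal parabolic of $\GL(3,q)$ or $\SL(3,q)$. When $3$ divides $q-1$, no proper parabolic (equivalently, no proper Levi) contains a Sylow $3$-subgroup: writing $q-1=3^td$ with $3\nmid d$, one has $|\GL(3,q)|_3=3^{3t+1}$ (the extra factor $3$ coming from $q^2+q+1\equiv 3\pmod 9$), while a maximal parabolic has $3$-part only $3^{3t}$, and the same deficiency persists after passing to $\SL(3,q)$, $\PGL(3,q)$ or $\PSL(3,q)$. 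Concretely, the class of the $3$-cycle permutation matrix $Y$ from Proposition~\ref{prop:3structure3} lies in the Sylow $3$-subgroup but in no proper parabolic. Hence the hypothesis $N_{G}(S)\leq H$ of Theorem~\ref{thm:method} fails for your choice, and the Green-correspondence conclusion of that theorem is unavailable, so your argument for $TT(H)=0$ does not get started. The correct choice, as in the paper, is the image of the normalizer of the maximally split torus (the monomial subgroup): it contains the Sylow normalizer, its normal Sylow $3$-torus gives condition (A) via Proposition~\ref{prop:normal}, and the remaining generators (the analogues of $X$ and the Weyl representatives) are then placed in commutator subgroups of normalizers of noncentral rank-one subgroups of the torus, which contain Levi subgroups such as $(\GL(2,q)\times\GL(1,q))\cap\SL(3,q)$, exactly as in Propositions~\ref{prop:rp-case} and~\ref{prop:gensl}. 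With that substitution the rest of your plan goes through as in the paper.
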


\begin{proof}
The factors $X(G/Z)$ are determined in Proposition \ref{prop:3structure2}. 
The ranks of the torsion free parts of $T(G/Z)$ are established in 
Proposition \ref{prop:3structure3}. The only question is the kernel $K(G/Z)$
of the restriction $T(G/Z) \to T(S)$ where $S$ is a Sylow $3$-subgroup
of $G/Z$, which is isomorphic to either $\PGL(3,q)$ or $\PSL(3,q)$.  
In cases (a) and (b), we compute $K(G/Z)$ using Theorem \ref{thm:method}
with $H$ being the normalizer of the image of the torus in 
$\GL(3,q)$ or $\SL(3,q)$ respectively. Note here that the normalizer of the 
images of the torus is equal to the image of the normalizer of the 
torus. The calculation is very similar to that in the proofs of
Propositions \ref{prop:composite} and \ref{prop:sleps}, and we leave it
to the reader to fill in the details. 
The result is that $K(G/Z) = TT(G/Z) = \{0\}$ for $G/Z = \PGL(3,q)$ 
in case (a) and for $G/Z= \PSL(3,q)$ in case (b).

The only thing left is the calculation of $K(G/Z) = TT(G/Z)$ in case (c),
where $G/Z = \PSL(3,q)$ and $q \equiv 4, 7 \pmod 9$. In this situation, a Sylow 
$3$-subgroup $S$ of $G/Z$ is elementary abelian of order $9$ and the
methods of \cite{CT4} apply. More precisely, \cite[Theorem 8.4]{CT4}
shows that $K(G/Z) \cong \bZ/2\bZ \oplus \bZ/2\bZ$. 
\end{proof}


\section{Type $A_2$ in Characteristic 2} \label{sec:sl32}

Throughout this section let $p=2$ and $G$ be a group such
that $\SL(3,q) \subseteq G \subseteq \GL(3,q)$. 
Let $Z$ be a central subgroup of $G$. Our objective is to determine
$T(G/Z)$ under these assumptions, namely addressing the first part of
the cases of Theorem \ref{thm:noncyclic1a} excluded by condition (d) of
the hypothesis.  
 
We begin with a decomposition of $G/Z$ (similar to that for $p=3$ and
Lemma~\ref{lem:3structure1}).

\begin{lemma} \label{lem:sl3char2}
Let $G$ and $Z$ be as given above. Then  $G/Z \cong H/Z_3 \times W_2
\times W$ where
\begin{itemize}
\item[(a)] $W$ is the direct product of the Sylow $\ell$-subgroups of
$\Det(G)/\Det(Z)$ for $\ell$ not equal to $2$ or $3$,
\item[(b)] $W_2$ is the Sylow $2$-subgroup of $\Det(G)/\Det(Z)$,
\item[(c)] $Z_3$ is the Sylow $3$-subgroup of $Z$, and
\item[(d)] $H$ is an extension
\[
\xymatrix{
1 \ar[r] & \SL(3,q) \ar[r] & H \ar[r] & V_3 \ar[r] &1
}
\]
where $V_3$ is the Sylow 3-subgroup of $\Det(G)/\Det(Z)$.
\end{itemize}
Moreover, $T(G/Z) \cong T(H/Z_3 \times W_2) \oplus X(W)$.
\end{lemma}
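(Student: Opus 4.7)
The plan is to mirror the proof of Lemma~\ref{lem:3structure1}, splitting off the part of $G/Z$ coming from scalar matrices of order coprime to $3$. Since $\Det(G) \subseteq \bfq^\times$ is cyclic, the quotient $\Det(G)/\Det(Z)$ decomposes as a direct product of its Sylow subgroups, namely $V_3 \times W_2 \times W$ with $V_3$ the Sylow $3$-subgroup, $W_2$ the Sylow $2$-subgroup, and $W$ the Hall $\{2,3\}'$-subgroup. The key observation is that cubing is a bijection on $W_2\times W$ but not on $V_3$, so only the former can be detached via scalars.

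First, I would show that $W_2 \times W$ is a direct factor of $G/Z$ using scalar matrices. Define $\psi: W_2 \times W \to G/Z$ by $\psi(a) = aI_3 \cdot Z$ for $a$ a representative in $\bfq^\times$, noting that the image of $\psi$ lies in $Z(G/Z)$. Let $\vartheta: G/Z \to W_2 \times W$ be the composition of the induced determinant $G/Z \to \Det(G)/\Det(Z)$ with the projection onto $W_2 \times W$. A direct computation gives $\vartheta \circ \psi(a) = a^3$, which is an automorphism of $W_2 \times W$ because $\gcd(3, |W_2 \times W|) = 1$. Hence $W_2 \times W$ splits off as a direct factor of $G/Z$, with complement $\ker(\vartheta)$.

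Next, I would identify $\ker(\vartheta)$ with $H/Z_3$. Let $D_3$ denote the Sylow $3$-subgroup of the cyclic group $\Det(G)$, and set $H = \Det^{-1}(D_3) \subseteq G$. Then $\SL(3,q) \le H$, and under the quotient $\Det(G) \twoheadrightarrow \Det(G)/\Det(Z)$, the subgroup $D_3$ maps to $V_3$, giving the extension $1 \to \SL(3,q) \to H \to V_3 \to 1$ as required (after a suitable choice of splitting in the cyclic group, which exists because $\Det(G)$ is cyclic). A direct check via the cubing map on scalar matrices shows $H \cap Z = Z_3$: an element $aI_3 \in Z$ lies in $H$ iff $a^3 \in D_3$, which forces the $3'$-part of $a$ to be trivial. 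Hence $H/Z_3 \cong HZ/Z$ coincides with $\ker(\vartheta)$, yielding the group isomorphism $G/Z \cong H/Z_3 \times W_2 \times W$.

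For the endotrivial module decomposition, observe that $W$ is a $p'$-group (since $p = 2$) and is an abelian direct factor of $G/Z$. As in the last part of the proof of Lemma~\ref{lem:3structure1}, the kernel of the restriction map $T(G/Z) \to T(H/Z_3 \times W_2)$ is generated by the isomorphism classes of the indecomposable summands of the induction $k_{H/Z_3 \times W_2}^{\uparrow G/Z}$. Since $[G/Z : H/Z_3 \times W_2] = |W|$ is coprime to $2$ and $H/Z_3\times W_2$ is normal in $G/Z$ with abelian quotient $W$, this induced module is a direct sum of one-dimensional $k(G/Z)$-modules on which $H/Z_3 \times W_2$ acts trivially. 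Thus the kernel of restriction equals $X(W)$, and we conclude $T(G/Z) \cong T(H/Z_3 \times W_2) \oplus X(W)$. The main obstacle I expect is carefully tracking the interplay between the Sylow $3$-subgroups of $\Det(G)$, $\Det(Z)$, and $Z$ to ensure that $H \cap Z$ is precisely $Z_3$ and that the lift of $V_3$ to $D_3 \subseteq \Det(G)$ is compatible with the extension structure of $H$.
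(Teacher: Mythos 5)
Your proposal follows the paper's proof in outline and in substance: exactly as the paper does, you mirror Lemma~\ref{lem:3structure1}, using central scalar matrices and the fact that cubing is an automorphism of the Hall $3'$-part of $\Det(G)/\Det(Z)$ to split off $W_2\times W$, and you identify the complementary factor $\ker\vartheta$ with $H/Z_3$ via the checks $H\cap Z=Z_3$ and $HZ/Z=\ker\vartheta$; your treatment of the ``Moreover'' statement is the same as the last paragraph of the proof of Lemma~\ref{lem:3structure1}. These computations are correct and supply precisely the details the paper's one-line proof leaves implicit.

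The one claim that fails as written is the parenthetical assertion that $H=\Det^{-1}(D_3)$ becomes an extension of $\SL(3,q)$ by $V_3$ ``after a suitable choice of splitting.'' In fact $H/\SL(3,q)\cong D_3$, and the surjection $D_3\to V_3$ is an isomorphism only when the $3$-part of $\Det(Z)$ is trivial, i.e.\ when $|Z_3|\leq 3$ (the $3$-part of $\Det(Z)$ is $(Z_3)^3$, since $\Det$ is cubing on scalars). If $9$ divides $|Z_3|$ --- for instance $G=\GL(3,q)$, $Z=Z(G)$ and $9\mid q-1$, where $\Det(Z)=(\bfq^\times)^3$ has nontrivial $3$-part --- then $D_3\not\cong V_3$, and no alternative choice repairs this: there $W_2\times W$ is trivial, $V_3\cong C_3$, and any $H'$ with $\SL(3,q)\subseteq H'\subseteq G$ and $H'/\SL(3,q)\cong V_3$ satisfies $|H'|/|Z_3|\leq 3|\SL(3,q)|/9<|\SL(3,q)|=|\PGL(3,q)|=|G/Z|$, so $H'/Z_3$ cannot be isomorphic to $G/Z$. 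Thus with your $H$ the displayed isomorphism $G/Z\cong H/Z_3\times W_2\times W$ is correct, but the top quotient of $H$ is $D_3$ rather than $V_3$. This imprecision is inherited from the wording of the lemma itself (the paper's proof is no more careful on this point), and it is harmless for the sequel: the proof of Theorem~\ref{thm:sl3char2} only uses that $H/\SL(3,q)$ is a cyclic $3$-group together with the direct-product decomposition, which your construction provides. You should either state the extension with $D_3$ in place of $V_3$, or note that they agree exactly when the $3$-part of $\Det(Z)$ is trivial, rather than appeal to a ``choice of splitting,'' which does not exist in the excluded cases.
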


\begin{proof}
The proof follows the same line of reasoning as in  Lemma
\ref{lem:3structure1}.  That is, the composition
\[
Z(G) \to G \to \Det(G)
\]
induces an isomorphism from the Sylow $\ell$-subgroup of $Z(G)$
to the Sylow $\ell$-subgroup of $\Det(G)$ for all primes $\ell \neq 3$.
The same holds for the induced composition
\[
Z(G)/Z \to G/Z \to \Det(G)/\Det(Z).
\]
One can finish the proof by letting $H$ be the inverse image of the
Sylow 3-subgroup  of $\Det(G)/\Det(Z)$ under the map induced by the
determinant.
\end{proof}

Now we can state the main theorem of the section.

\begin{thm}  \label{thm:sl3char2} Let $G$ be a group such that
$\SL(3,q) \subseteq G \subseteq \GL(3,q)$ and let $Z \subseteq
  Z(G)$. Assume that the field $k$ has characteristic $2$. 
\begin{itemize}
\item[(a)] Suppose that $2$ divides the order of $\Det(G)/\Det(Z)$. Then
$T(G/Z) \cong \bZ \oplus X(G/Z)$.
\item[(b)] Suppose that $2$ does not divide the order of $\Det(G)/\Det(Z)$.
Then

\begin{itemize}
\item[(i)] if $4$ divides $q-1$, then $T(G/Z) \cong \bZ \oplus X(G/Z)$, 
\item[(ii)] if $4$ divides $q+1$, then $T(G/Z) \cong \bZ \oplus \bZ/2\bZ
\oplus X(G/Z)$.
\end{itemize}
\end{itemize}
\end{thm}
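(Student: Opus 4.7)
The plan is to reduce via Lemma \ref{lem:sl3char2} to computing $T(H/Z_3\times W_2)$, determine the torsion-free rank using Theorem \ref{thm:rank}, and then analyze the kernel of restriction to a Sylow $2$-subgroup case by case. By Lemma \ref{lem:sl3char2}, $T(G/Z)\cong T(H/Z_3\times W_2)\oplus X(W)$ and $X(G/Z)\cong X(H/Z_3)\oplus X(W_2)\oplus X(W)$, so the task reduces to computing $T(H/Z_3\times W_2)$. A Sylow $2$-subgroup $S_{H/Z_3}$ of $H/Z_3$ is isomorphic to one of $\SL(3,q)$, since both $H/\SL(3,q)$ and $Z_3$ are $3$-groups. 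For $q$ odd the $2$-rank of $\SL(3,q)$ equals $2$, realized by the subgroup $E$ of diagonal matrices with $\pm1$ entries and determinant $1$.

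For the torsion-free rank, in case (a) the direct factor $W_2$ is a nontrivial cyclic $2$-group, so $G/Z$ has $2$-rank at least $3$ and every maximal elementary abelian $2$-subgroup of $S_{H/Z_3}\times W_2$ has rank at least $3$ (any rank-$2$ one is contained in $E\times V_2$ for a rank-one $V_2\subset W_2$); hence $n_{G/Z}=0$ and $TF(G/Z)\cong\bZ$ by Theorem \ref{thm:rank}. In case (b), $W_2=1$ and the $2$-rank is $2$; since all involutions in $\SL(3,q)$ are conjugate to $\mathrm{diag}(1,-1,-1)$ and commuting involutions share a simultaneous diagonalization, there is a unique conjugacy class of maximal elementary abelian $2$-subgroups of rank $2$, so again $TF(G/Z)\cong\bZ$. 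For the torsion part in case (a), $W_2$ is a nontrivial central $2$-subgroup, so Proposition \ref{prop:normal} forces the kernel of $T(G/Z)\to T(S)$ to be $X(G/Z)$; coupled with a direct check that the image contains no torsion beyond the class of $\Omega k$, this gives $T(G/Z)\cong\bZ\oplus X(G/Z)$.

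In case (b)(i) with $4\mid q-1$, the Sylow $2$-subgroup of $\SL(3,q)$ is the wreath product $C_{(q-1)_2}\wr C_2$ inside the normalizer of the diagonal torus, and $E$ is characteristic. I would apply Theorem \ref{thm:method} with $H=N_{G/Z}(E)$ and auxiliary subgroups of the form $(\GL(2,q)\times\GL(1,q))\cap\SL(3,q)$: the condition $4\mid q-1$ ensures that both diagonal blocks of such a Levi subgroup have orders divisible by $2$, so Proposition \ref{prop:directprod} supplies condition~(A), and the standard generators of $H$ (torus and monomial permutation elements) lie either in $[H,H]S$ or in the commutator subgroup of such a Levi. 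This parallels the argument of Propositions \ref{prop:composite} and \ref{prop:sleps}, yielding $K(G/Z)=X(G/Z)$ and so $T(G/Z)\cong\bZ\oplus X(G/Z)$.

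In case (b)(ii) with $4\mid q+1$, the Sylow $2$-subgroup of $\SL(3,q)$ is semidihedral; the $\GL(1,q)$-factor of the Levi $\GL(2,q)\times\GL(1,q)$ has odd $2$-part, so Proposition \ref{prop:directprod} does not apply, and the chain $\rho^i(Q)$ does not close up via the simplified Theorem \ref{thm:method}. I would instead invoke the results of \cite{CT4} for groups with a semidihedral Sylow $2$-subgroup (mirroring the use of \cite[Theorem 8.4]{CT4} in Theorem \ref{thm:noncyclic3}(c) for the analogous characteristic $3$ situation), producing an extra $\bZ/2\bZ$ of exotic trivial-source endotrivial modules in the kernel of restriction beyond $X(G/Z)$, and yielding $T(G/Z)\cong\bZ\oplus\bZ/2\bZ\oplus X(G/Z)$. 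The main obstacle is case (b)(ii): correctly identifying the applicable result from \cite{CT4} for the semidihedral Sylow setting and verifying that the resulting extension of $\bZ/2\bZ$ over $X(G/Z)$ splits as a direct sum.
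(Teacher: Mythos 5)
Your reduction via Lemma \ref{lem:sl3char2}, the rank computations, and case (a) are in line with the paper, but case (b)(ii) contains a genuine error. The extra $\bZ/2\bZ$ does \emph{not} arise from exotic trivial-source modules in the kernel of restriction to $S$. The paper proves that in case (b) every indecomposable module with trivial Sylow restriction has dimension one: first for $J=(Z_3\cdot\SL(3,q))/Z_3$, where $N_J(S)=S\times Z(J)$ is either self-normalizing (Proposition \ref{prop:selfnormal}) or, when $Z(J)\cong C_3$, is handled by showing $\rho^3(S)=N_J(S)$ with the explicit elements $X,Y,A,B$ and then quoting \cite{CT4}; and then for $G/Z$ by inducing from the normal subgroup $J$ of odd index. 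Hence the kernel of $T(G/Z)\to T(S)$ is exactly $X(G/Z)$, with no exotic classes. The $\bZ/2\bZ$ in (b)(ii) comes instead from $T(S)\cong\bZ\oplus\bZ/2\bZ$ for the semidihedral Sylow $2$-subgroup, together with the split surjectivity of restriction given by \cite{CMT2}; the analogy you draw with Theorem \ref{thm:noncyclic3}(c) (elementary abelian Sylow subgroup of order $9$, \cite[Theorem 8.4]{CT4}) does not transfer. By deferring to \cite{CT4} in this way you also skip the actual hard step of (b)(ii), namely ruling out nontrivial Green correspondents of the characters of $N_J(S)=S\times C_3$ when $3$ divides $q-1$, which is precisely what the $\rho^3$ computation accomplishes.

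In (b)(i) there is a concrete misstep: Proposition \ref{prop:directprod} cannot supply condition (A) for the Levi $(\GL(2,q)\times\GL(1,q))\cap\SL(3,q)$. Since $p=2$ divides $q-1$, hypothesis (a) of that proposition demands at least two diagonal blocks of size greater than one, impossible for $n=3$; concretely, the $\GL(1,q)$-factor meets the Levi-in-$\SL$ trivially, so Theorem \ref{thm:directprod} does not apply. Condition (A) for this Levi is nevertheless true, but by Proposition \ref{prop:normal}: it is isomorphic to $\GL(2,q)$, whose center has a nontrivial $2$-part. With that repair your route through $H=N_{G/Z}(E)$ can be made to work, but only after checking what you left implicit: that $E$ is characteristic in the wreathed Sylow subgroup (so $N_{G/Z}(S)\subseteq H$), that $[H,H]$ has index $2$ in $H$ (Lemma \ref{lem:Levicom} with three blocks of size one) so that the order-$3$ scalar matrices are absorbed into $[H,H]$, and that the passage from the image of $\SL(3,q)$ to $G/Z$ (an extension by a $3$-group) is handled, as in the paper's final induction step. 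The paper instead treats (b)(i) and (b)(ii) uniformly through $J$, which is why it never needs a Levi-based application of Theorem \ref{thm:method} here.
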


\begin{proof}
Let $H$, $Z_3$, $W_2$ and $W$ be as in Lemma
\ref{lem:sl3char2}. Suppose that $2$ divides the order of $\Det(G)/\Det(Z)$. 
Observe that $G/Z$ has $2$-rank $3$, since $H$ has $2$-rank $2$ (an easy fact
that is apparent later in this proof). Moreover, the center of
a Sylow $2$-subgroup of $G$ has $2$-rank $2$. So every maximal elementary
abelian subgroup has $2$-rank $3$. This means that $T(G/Z)$ has
torsion-free rank one. In addition, $G/Z$ has a nontrivial normal $2$-subgroup,
and hence every indecomposable $k(G/Z)$-module 
with trivial Sylow restriction has dimension one. The proves (a).

For the rest of the proof  assume that $2$ does not divide the order of $\Det(G)/\Det(Z)$.
That is, the group $W_2$ of Lemma~\ref{lem:sl3char2} is trivial.
Then, a Sylow $2$-subgroup of $G/Z$ is the image of a Sylow $2$-subgroup of $\SL(3,q)$
under the map $\SL(3,q) \to (Z \cdot \SL(3,q))/Z \hookrightarrow
G/Z$. Thus, $G/Z$ has
$2$-rank two and any two maximal elementary abelian $2$-subgroups are conjugate
in $\SL(2,q)$ and also in $G/Z$. It follows that $TF(G/Z)\cong {\mathbb Z}$ and 
generated by the class of $\Omega(k)$.  

Let $S$ denote the Sylow $2$-subgroup of $G/Z$. We distinguish two
cases. 
First, suppose that $q \equiv 1\pmod4$.
Then $q-1 = 2^td$ where $d$ is odd and $\vert S \vert = 2^{2t+1}$. We can
assume that $S$ is generated by the classes (modulo $Z$) of the elements
\[
X_1 = \begin{bmatrix}\zeta\\&\zeta^{-1} \\&& 1\end{bmatrix},
\quad
X_2 = \begin{bmatrix}1\\& \zeta\\&&\zeta^{-1}\end{bmatrix}, \ \text{and} \
\quad
Y = \begin{bmatrix}  & 1\\ -1\\&& 1\end{bmatrix}
\]
where $\zeta \in \bfq$ is a root of unity of order $2^t$.
The group $S$ is a wreath product and has a unique $G/Z$-conjugacy class
of Klein four subgroups. Therefore $TF(G/Z) \cong \bZ$.

For the other case, we suppose that $q \equiv 3\pmod4$ and that
$q+1 = 2^td$ where $d$ is odd and $\vert S \vert = 2^{t+2}$.
We can choose the Sylow $2$-subgroup $S$ of $G/Z$ that is the collection
of classes (modulo $Z$) of all block matrices of the form
\[
\begin{bmatrix} X  & \\ \\& r \end{bmatrix}
\]
where $r = \Det(X)^{-1}$ and 
where $X$ runs through the elements of some fixed Sylow $2$-subgroup of $\GL(2,q)$. Thus,
$S$ is isomorphic to a Sylow $2$-subgroup of $\GL(2,q)$, since the
two groups have the same order. It is well known that $S$ is
semi-dihedral (see also \cite{ABG, CF}). Hence, 
$T(S) \cong \bZ \oplus \bZ/2\bZ$. By \cite{CMT2}, the restriction
map $T(G/Z) \to T(S)$ is a split surjection.

The only issue left to prove is that every indecomposable $k(G/Z)$-module
with trivial Sylow restriction has dimension one. Observe
by Lemma \ref{lem:sl3char2}, we may assume that $G = H$ is an
extension of $\SL(3,q)$ by a cyclic group whose order is a power
of $3$. With this assumption $G/Z = H/Z_3$.
The asserted result is obtained in two steps.

Let $J = (Z_3\cdot \SL(3,q))/Z_3 \cong \SL(3,q)/(Z_3 \cap
\SL(3,q))$. The first step is to show that $T(J) = T(S)$, or equivalently,
that every indecomposable endotrivial $kJ$-module with trivial Sylow
restriction has dimension one. 

Notice that $Z_3 \cap \SL(3,q)$ is in the center of
$\SL(3,q)$ and has order either $1$ or $3$. The normalizer of the Sylow
$2$-subgroup $S$ of $J$ has the form $N_J(S) = S \times Z(J)$ with
$Z(J)$ of order $1$ or $3$.
Suppose that $|Z(J)|=1$, that is, either $Z_3 \neq \{1\}$ or $3$ does
not divide $q-1$. Then $T(J) = T(S)$ by
Proposition~\ref{prop:selfnormal} and the first step is complete in this case.

Now assume that $Z(J)$ has order $3$. Then $3$ divides $q-1$,
$Z_3 = \{1\}$ and $J = \SL(3,q)$. Let $u$ be a primitive cube root of one.
We fix the following elements of $J$
\[
X = \begin{bmatrix}u\\& u^{2} \\&& 1\end{bmatrix}, \quad
Y = \begin{bmatrix}1 \\& u^2 \\&& u \end{bmatrix}, \quad
A = \begin{bmatrix} -1\\& -1 \\&& 1\end{bmatrix},  
\quad B = \begin{bmatrix} 1\\& -1 \\&& -1\end{bmatrix}.
\]
Note that $XY$ generates the Sylow $3$-subgroup of $Z(J)$. 
Moreover, $X$ is in the commutator subgroup (which is isomorphic to
$\SL(2,q)$) of $N_J(\langle A \rangle)$ and, similarly, $Y$ is in the
commutator subgroup of $N_J(\langle B \rangle)$. Furthermore,
$S\subseteq\rho^2(\langle A\rangle)$ by an argument similar to the one
used in the proof of Proposition~\ref{prop:rp-case}. So
in the notation of Section \ref{sec:maintec},
$X \in \rho^1(\langle A \rangle)$ and  $Y \in \rho^1(\langle B \rangle)$.
It follows that $Y \in \rho^2(\langle A \rangle)$ and
$XY \in \rho^3(S)$. Thus, $\rho^3(S) = N_J(S)$, and  
from \cite{CT4} we have that the only indecomposable endotrivial
$kJ$-module with trivial Sylow restriction is the trivial module. This completes
the first step.

For the second step and to finish the proof of the theorem, suppose that
$M$ is a $k(G/Z)$-module with trivial Sylow restriction. Then
$M_{\downarrow J} \cong k \oplus \proj$. 
This implies that $M$ is a direct summand
of $(k_{J})^{\uparrow G/Z}$ which is a direct sum of one-dimensional 
modules, since $J$ is a normal subgroup of $G/Z$ of odd index.
Therefore, $M$ has dimension one.
\end{proof}

\section{Type $A_1$ in Characteristic 2} \label{sec:sl22}

Throughout this section let $p=2$ and let $G$ be a group such
that $\SL(2,q) \subseteq G \subseteq \GL(2,q)$. 
Let $Z$ be a central subgroup of $G$. Our objective is to determine
$T(G/Z)$ under these assumptions, namely addressing the second part of
the cases of Theorem \ref{thm:noncyclic1a} excluded by condition (d) of
the hypothesis.

This case is more tedious than the previous one because the group $G/Z$
can have dihedral (including Klein four), semi-dihedral or generalized quaternion
Sylow $2$-subgroups. The differences in the $2$-local structure of $G/Z$
lead to as many distinct outcomes for the structure of $T(G/Z)$, which we
now detail.

As in the previous two sections, we start with a useful decomposition of
$G/Z$. The proof of the lemma below is similar to that of Lemma
\ref{lem:sl3char2} and therefore left to the reader. 

\begin{lemma} \label{lem:sl2char2}
For $G$ and $Z$ as above, $G/Z \cong H/Z_2\times W$ where
\begin{itemize}
\item[(a)] $W$ is the odd part of $\Det(G)/\Det(Z)$,
\item[(b)] $Z_2$ is the Sylow $2$-subgroup of $Z$, and
\item[(c)] $H$ is an extension
\[
\xymatrix{
1 \ar[r] & \SL(2,q) \ar[r] & H \ar[r] & V_2 \ar[r] &1
}
\]
where $V_2$ is the Sylow $2$-subgroup of $\Det(G)/\Det(Z)$.
\end{itemize}
In addition  $T(G/Z) \cong T(H/Z_2) \oplus X(W)$, with $X(H/Z_2) =
\{1\}$ and $X(G/Z) \cong X(W)$.
\end{lemma}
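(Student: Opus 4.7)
The plan is to follow the proof of Lemma~\ref{lem:sl3char2} (which in turn mimics Lemma~\ref{lem:3structure1}) with the prime $3$ replaced by $2$: I will split off the $2'$-part $W$ of $\Det(G)/\Det(Z)$ as a central direct factor of $G/Z$ realized by scalar matrices.

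To set things up, decompose $\Det(G) = A \times B$ and $\Det(Z) = A' \times B'$ into Sylow $2$-subgroups $A \supseteq A'$ and $2$-complements $B \supseteq B'$, so that $V_2 \cong A/A'$ and $W \cong B/B'$. Define $\vartheta : G/Z \to W$ as the composition of the induced determinant $G/Z \to \Det(G)/\Det(Z)$ with the projection to the $2$-complement, and build a central section $\psi : W \to G/Z$ by sending the class of $b \in B$ to $\overline{b I_2}$. Well-definedness of $\psi$ reduces to $c I_2 \in Z$ for every $c \in B'$: choosing $z = \lambda I_2 \in Z$ with $\lambda^2 = \Det(z) = c$, we get $c I_2 = z^2 \in Z$. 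One checks similarly that $\psi$ is an injective homomorphism with image in $Z(G/Z)$. The composite $\vartheta \circ \psi$ is then $w \mapsto w^2$, an automorphism of $W$ since $|W|$ is odd, and a standard diagram chase yields $G/Z \cong \ker\vartheta \times \psi(W) \cong \ker\vartheta \times W$.

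The main piece of bookkeeping is the identification $\ker\vartheta \cong H/Z_2$. Set $\tilde{H} = \{g \in G : \Det(g) \in A \cdot \Det(Z)\}$, whose image in $G/Z$ is precisely $\ker\vartheta$, and define $H = \{g \in G : \Det(g) \in A\}$. Then $H \supseteq \SL(2,q)$ with $H/\SL(2,q) \cong A$ surjecting onto $V_2$, and a direct check gives $H \cap Z = Z_2$ (an element $\lambda I_2 \in Z$ lies in $H$ iff $\lambda^2 \in A$ iff $\lambda$ has $2$-power order). Since $|\Det(Z_{2'})| = |Z_{2'}|$ by the bijectivity of squaring on $2'$-groups, a size comparison shows that the inclusion $H \hookrightarrow \tilde{H}$ descends to a surjection $H/Z_2 \twoheadrightarrow \tilde{H}/Z = \ker\vartheta$, which is necessarily an isomorphism. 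This yields the direct product $G/Z \cong H/Z_2 \times W$ and the extension description of $H$ in the statement.

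For the endotrivial claim, the direct product $G/Z \cong H/Z_2 \times W$ together with the fact that $W$ is a $p'$-group gives $T(G/Z) \cong T(H/Z_2) \oplus X(W)$ via the block decomposition of $k(G/Z)$. Triviality of $X(H/Z_2)$ follows because any one-dimensional $k$-character of $H/Z_2$ must vanish on the perfect subgroup $\SL(2,q) Z_2/Z_2$ (for $q > 3$) and therefore factors through the $2$-group $H/(Z_2 \SL(2,q)) \cong V_2$, on which every character is trivial in characteristic~$2$.
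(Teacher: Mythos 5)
Your proposal is correct and follows essentially the route the paper intends: the paper's proof is only a pointer to Lemma~\ref{lem:sl3char2} and Lemma~\ref{lem:3structure1}, and your argument is exactly that construction transported to $n=p=2$ (scalar section $\psi$, determinant map $\vartheta$, squaring an automorphism of the odd-order group $W$, central image, hence a direct factor), with the identification of $\ker\vartheta$ carried out via $H\cap Z=Z_2$ and $HZ=\tilde H$.

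One point deserves a caveat, though it reflects an imprecision in the statement rather than a gap in your argument. Your $H=\{g\in G:\Det(g)\in A\}$ satisfies $H/\SL(2,q)\cong A$, the Sylow $2$-subgroup of $\Det(G)$, whereas item (c) asserts the quotient is $V_2$, the Sylow $2$-subgroup of $\Det(G)/\Det(Z)$; these agree exactly when the Sylow $2$-subgroup of $Z$ has order at most $2$. When $Z$ contains a scalar of order at least $4$ (possible only for $q\equiv 1 \pmod 4$ and $s\geq 1$; e.g.\ $G=\GL(2,17)$, $Z=Z(G)$), a simple order count shows that no subgroup $H$ with $H/\SL(2,q)\cong V_2$ can satisfy $G/Z\cong H/Z_2\times W$, since that isomorphism forces $|\Det(Z)|=|Z|/|Z_2|$, i.e.\ $|Z_2|\leq 2$. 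So your last sentence of the bookkeeping paragraph, claiming you have obtained ``the extension description of $H$ in the statement,'' should instead note that the correct description is the one you actually proved ($H$ an extension of $\SL(2,q)$ by the Sylow $2$-subgroup of $\Det(G)$, equivalently $H=Z_2\cdot\SL(2,q)$ when $r=s+1$); this is also the version consistent with how $H/Z_2$ is used in the proof of Theorem~\ref{thm:sl2char2}. Your parenthetical restriction $q>3$ for the perfectness of $\SL(2,q)$ in the final paragraph is likewise a genuine (implicit) hypothesis of the section, not a defect of your proof.
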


We can now state and prove the main theorem of this section.

\begin{thm}  \label{thm:sl2char2} Let $G$ be a group such that
$\SL(2,q) \subseteq G \subseteq \GL(2,q)$ 
and $Z \subseteq Z(G)$. Assume that the field
$k$ has characteristic 2. Write $\vert G: \SL(2,q) \vert = 2^se$ and
$\vert Z \vert = 2^rf$ where $e$ and $f$ are odd integers.
The group of endotrivial modules
$T(G/Z)$ is described as follows.
\begin{itemize}
\item[(A)] Suppose that $q \equiv 3$ (mod  4). Write $q+1 = 2^td$ where
$d$ is odd. Note that $r, s  \in \{0, 1\}$.
\begin{itemize}
   \item[(1)] Assume that $s = 0$.
   \begin{itemize}
      \item[(a)] If $r = 0$, then $T(G/Z) \cong \bZ/4\bZ \oplus \bZ/2\bZ
            \oplus X(G/Z).$
      \item[(b)] If $r = 1$, then
      \begin{itemize}
         \item[(i)] if $q \equiv 3$ (mod 8), then
                $T(G/Z) \cong \bZ \oplus \bZ/3\bZ \oplus X(G/Z),$
         \item[(ii)] if $q \equiv 7$ (mod 8), then
                $T(G/Z) \cong \bZ^{2} \oplus X(G/Z).$
       \end{itemize}
   \end{itemize}
   \item[(2)] Assume that $s = 1$.
   \begin{itemize}
      \item[(a)] If $r = 0$ then
            $T(G/Z) \cong \bZ \oplus \bZ/2\bZ \oplus X(G/Z).$
      \item[(b)] If $r = 1$ then
            $T(G/Z) \cong \bZ^{2} \oplus X(G/Z).$
   \end{itemize}
\end{itemize}
\item[(B)] Suppose that $q \equiv 1$ (mod  4). Write $q-1 = 2^td$ where
$d$ is odd. Note that $0 \leq r \leq s+1$, $r \leq t$ and $s \leq t$.
\begin{itemize}
   \item[(1)] Assume that $r = 0$.
   \begin{itemize}
      \item[(a)] If $s = 0$, then
            $T(G/Z) \cong \bZ/4\bZ \oplus \bZ/2\bZ \oplus X(G/Z).$
      \item[(b)] If $s > 0$, then
            $T(G/Z) \cong \bZ \oplus X(G/Z).$
   \end{itemize}
   \item[(2)] Assume that $r > 0$.
   \begin{itemize}
      \item[(a)] If $0 < r < s+1 \leq t$, then
            $T(G/Z) \cong \bZ \oplus X(G/Z).$
      \item[(b)] If $r = s+1 \leq t$, then
      \begin{itemize}
         \item[(i)] if $q \equiv 1$ (mod 8), then
                $T(G/Z) \cong \bZ^{2} \oplus X(G/Z),$
         \item[(ii)] if $q \equiv 5$ (mod 8), then
                $T(G/Z) \cong \bZ \oplus \bZ/3\bZ \oplus X(G/Z).$
      \end{itemize}
      \item[(c)] If $r = s = t$, then
                $T(G/Z) \cong \bZ^{2} \oplus X(G/Z).$
   \end{itemize}
\end{itemize}
\end{itemize}
\end{thm}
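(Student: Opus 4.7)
By Lemma~\ref{lem:sl2char2}, we have $T(G/Z)\cong T(H/Z_2)\oplus X(W)$ with $X(W)\cong X(G/Z)$, so the problem reduces to computing $T(\overline{G})$ with $\overline{G}=H/Z_2$. The overall plan is to (i) identify a Sylow 2-subgroup $S$ of $\overline{G}$ in each subcase, (ii) compute $T(S)$ using the Carlson--Th\'evenaz classification of endotrivial modules for 2-groups, (iii) determine the torsion-free rank of $T(\overline{G})$ via Theorem~\ref{thm:rank}, and (iv) analyse the restriction $T(\overline{G})\to T(S)$ to extract the torsion.

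Step (i) uses the well-known local structure: the Sylow 2-subgroup of $\SL(2,q)$ is generalised quaternion, while that of $\GL(2,q)$ is semi-dihedral when $q\equiv 3\pmod{4}$ and isomorphic to $C_{2^t}\wr C_2$ when $q\equiv 1\pmod{4}$ with $q-1=2^td$. Depending on the values of $r,s,t$ in the statement, $S$ turns out to be generalised quaternion, dihedral (possibly Klein four), semi-dihedral, or one of a short list of subquotients of $C_{2^t}\wr C_2$. Step (ii) then gives $T(Q_{2^n})\cong\bZ/4\bZ\oplus\bZ/2\bZ$ for $n\geq 3$, $T(SD_{2^n})\cong\bZ\oplus\bZ/2\bZ$, $T(D_{2^n})\cong\bZ^2$ for $n\geq 3$, $T(V_4)\cong\bZ^2$, and the appropriate value for the wreath-product subquotients. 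Step (iii) amounts to counting $\overline{G}$-conjugacy classes of maximal elementary abelian 2-subgroups of order $4$ from an explicit matrix description.

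For step (iv), we use one of three tools, depending on the case. If $S$ is a TI subgroup of $\overline{G}$ (for example in the generalised quaternion cases), Mazza--Th\'evenaz \cite{MT} gives $T(\overline{G})\cong T(N_{\overline{G}}(S))$. If $S$ is self-normalising in $\overline{G}$ (for example in the dihedral cases with $|S|\geq 8$), Proposition~\ref{prop:selfnormal} shows that the only indecomposable module with trivial Sylow restriction is the trivial module, so the kernel of restriction is trivial and the image is described by the results of \cite{CMT,CMT2}. In the remaining cases, Theorem~\ref{thm:method} (and, in the most delicate Klein four configuration, the refinement from \cite{CT4}) reduces the kernel of restriction to $X(\overline{G})$, and the image of restriction is again given by \cite{CMT2}.

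The principal obstacle is the $\pmod{8}$ dichotomy producing the summand $\bZ/3\bZ$ in cases A(1)(b)(i) ($q\equiv 3\pmod{8}$) and B(2)(b)(ii) ($q\equiv 5\pmod{8}$). In both subcases $S\cong V_4$, and because $q\equiv\pm 3\pmod{8}$ a 3-element of $\overline{G}$ cycles the three involutions of $S$, forcing $N_{\overline{G}}(S)\cong A_4$; the $\bZ/3\bZ$ summand then arises from $X(A_4)\cong\bZ/3\bZ$ via Mazza--Th\'evenaz. In the companion subcases A(1)(b)(ii) and B(2)(b)(i) the Sylow 2-subgroup is properly dihedral of order at least $8$ with $N_{\overline{G}}(S)=S$, yielding the pure torsion-free $\bZ^2$. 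Verifying this dichotomy, along with the analogous but easier local fusion in the remaining subcases, is a tedious but routine computation inside the appropriate quotients of $\GL(2,q)$, $\PGL(2,q)$, and $\PSL(2,q)$.
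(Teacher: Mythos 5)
Your overall architecture (reduce via Lemma~\ref{lem:sl2char2}, identify $S$, use Theorem~\ref{thm:rank} for the torsion-free rank, and analyse the kernel of $T(\overline{G})\to T(S)$ case by case) matches the paper, and most subcases are handled by essentially the same devices (central $2$-subgroups plus Proposition~\ref{prop:normal}, self-normalizing dihedral Sylow plus Proposition~\ref{prop:selfnormal}, and \cite{CMT2} for the quaternion and semi-dihedral Sylow subgroups). However, there is a genuine gap exactly where the theorem is most delicate: the Klein four cases A(1)(b)(i) and B(2)(b)(ii), where $\overline{G}\cong\PSL(2,q)$ with $q\equiv\pm3\pmod 8$. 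Your derivation of the $\bZ/3\bZ$ summand rests on the claim that $S$ is a TI subgroup, so that (by the mechanism you attribute to \cite{MT}) $T(\overline{G})\cong T(N_{\overline{G}}(S))=T(A_4)$ and the torsion is $X(A_4)\cong\bZ/3\bZ$. This is false for all but the smallest $q$: any involution $z$ of $S$ has centralizer a dihedral group of order $q+1=4d$ (resp.\ $q-1=4d$) with $d$ odd, and when $d>1$ that dihedral group contains $d$ distinct Klein four Sylow subgroups of $\overline{G}$, all meeting in $\langle z\rangle$; so distinct Sylow $2$-subgroups intersect nontrivially (e.g.\ already for $q=11$ or $q=13$). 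The same objection applies to your parenthetical claim that the generalized quaternion Sylow subgroups are TI -- they all contain the central involution -- although in that case the conclusion is still recoverable from \cite{CMT2}, which you cite. Moreover \cite{MT} concerns cyclic Sylow subgroups, so it is not the right reference even where a TI argument would be available.

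Knowing $N_{\overline{G}}(S)\cong A_4$ only tells you that the candidate trivial-source endotrivial modules are the Green correspondents of the three one-dimensional $kA_4$-modules; it does not tell you that those Green correspondents are in fact endotrivial, which is precisely the nontrivial content of these two subcases. The paper does not prove this directly either: it observes that the correspondents have dimension greater than one and then invokes the detailed computation of $K(\overline{G})\cong\bZ/3\bZ$ from \cite{CT4} (the refinement of Theorem~\ref{thm:method}). Your proposal does mention \cite{CT4} in passing, but only as a tool ``reducing the kernel of restriction to $X(\overline{G})$'', which is trivial here since $\overline{G}$ is perfect modulo a $2$-group -- that would give torsion $0$, contradicting the asserted $\bZ/3\bZ$. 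So as written, the mechanism you give for the $\bmod\ 8$ dichotomy both misidentifies the source of the torsion and relies on a false TI hypothesis; you need to replace it by the \cite{CT4} computation (or an independent proof that the Green correspondents of the nontrivial one-dimensional $kA_4$-modules are endotrivial).
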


\begin{proof}
For the purposes of the proof let $H$ and $Z_2$ be as in Lemma
\ref{lem:sl2char2}. Hence, it suffices to find
$T(H/Z_2)$ in each of the above cases. 

Let $K$ denote the kernel of the restriction map $T(H/Z_2) \to 
T(S)$, where $S$ is a Sylow $2$-subgroup of $H/Z_2$. We should first
note that if $r = 0$ or if $r < s+1\leq t$ or if $r < s =t$, then
$K = \{1\}$, and the restriction map is injective. The reason is that 
in each of these cases $H/Z_2$ has a nontrivial central $2$-subgroup and
$X(H/Z_2)=\{1\}$, since $\SL(2,q)$ is a perfect group. 
Thus $K=\{1\}$ by Proposition \ref{prop:normal} and Lemma
\ref{lem:sl2char2}. It follows that the only cases in which $K$ might
not be trivial are (A)(1)(b), (B)(1)(b), and (B)(2)(c). 

Suppose first that $q+1 = 2^td$ for $t>1$ and $d$ odd. 
A Sylow $2$-subgroup of 
$\GL(2, q)$ is a semi-dihedral group of order $2^{t+2}$ and it is
self-normalizing, by \cite{ABG, CF}. 
This is the Sylow $2$-subgroup in the case (A)(2)(a). 
So the restriction map $T(H/Z_2) \to T(S) \cong\bZ \oplus \bZ/2\bZ$ 
is an isomorphism by \cite{CMT2}. 
In case (A)(1)(a), a Sylow $2$-subgroup of $H/Z_2$ is generalized
quaternion, as for $\SL(2,q)$, and so the restriction map 
$T(H/Z_2) \to T(S) \cong \bZ/4\bZ \oplus \bZ/2\bZ$ is an isomorphism
by \cite{CMT2}.

If $r=1$, the group $Z_2$ is the center of the Sylow 
$2$-subgroup of $H$. Thus, the Sylow $2$-subgroup $S$ of $H/Z_2$ is a 
dihedral group, possibly a Klein four group. In cases (A)(1)(b)(ii), and 
(A)(2)(b), $S$ is dihedral of order at least $8$. 
In these cases the group $H/Z_2$ has two conjugacy classes of (maximal)
elementary abelian $2$-subgroups. Hence, the torsion-free rank of $T(H/Z_2)$
is two, by Theorem \ref{thm:rank}. Note also that $S$ is self-normalizing.
Thus by Proposition \ref{prop:selfnormal}, $T(H/Z_2) \cong {\bZ}^{\times2}$
as asserted.   

In the case (A)(1)(b)(i), $H/Z_2 \cong \PSL(2,q)$, 
and $S$ is a Klein four group with normalizer 
$N_{H_2/Z}(S)\cong S\rtimes C_3$ of order $12$. The Green 
correspondents of the nontrivial $kN_{H/Z_2}(S)$-modules of dimension one
are $k(H/Z_2)$-modules with 
trivial Sylow restriction of dimension greater than
one. The detailed computation of $T(H/Z_2)$ is carried out in \cite{CT4}.

Suppose now that $q-1 = 2^td$ for $t > 1$ and $d$ odd. 
A Sylow $2$-subgroup of $\GL(2,q)$ is a wreath product, which we can
choose to be generated by 
\[
\begin{bmatrix} 0 & 1 \\ 1 & 0 \end{bmatrix}, \quad 
\begin{bmatrix} \zeta & 0 \\ 0 & 1 \end{bmatrix} \quad \text{and} \quad 
\begin{bmatrix} 1 &0 \\ 0 & \zeta \end{bmatrix}
\]
where $\zeta$ is a $2^t$-root of unity in $\bF_q^\times$. 
A Sylow $2$-subgroup of $\SL(2,q)$ is a generalized quaternion group
\cite{ABG}. Hence, if $r = s = 0$, then we have the same situation as in
case (A)(1)(a). 
If $r = 0 < s$, then the subgroup consisting of diagonal matrices with
entries $1$ and $-1$, has rank $2$ and every involution is
$H/Z_2$-conjugate to an element of this subgroup. Consequently, $H/Z_2$
has a unique conjugacy class of maximal elementary abelian $2$-subgroups,
all of which have order 4. 
Hence, the torsion-free rank of $T(G)$ is 
one and the proof of (B)(1) is complete. 

If $0 < r < s+1 \leq t$, then the group $S$ has an elementary abelian subgroup
of rank $3$, generated by the classes (modulo $Z_2$) of the elements 
\[
\begin{bmatrix} 0 & 1 \\ 1 & 0 \end{bmatrix}, \quad 
\begin{bmatrix} 1 & 0 \\ 0 & -1 \end{bmatrix} \quad \text{and} \quad 
\begin{bmatrix} \zeta &0 \\ 0 & \zeta \end{bmatrix}
\]
where $\zeta$ is a $2^{s+1}$-root of $1$ in $\bF_q^\times$. In addition,
the last two elements above are central in $S$ and hence $S$ has no
maximal elementary abelian subgroups of rank $2$. Because $H/Z_2$ has a
nontrivial normal $2$-subgroup, any indecomposable $k(H/Z_2)$-module 
with trivial Sylow restriction has dimension $1$ and the claim holds in
case (B)(2)(a).

If $r = s+1 \leq t$ then the composition 
\[
\SL(2, q) \to  (Z_2 \cdot \SL(2, q))/Z_2  \hookrightarrow H/Z_2 
\]
is surjective and has kernel $Z(\SL(2, q))$. Thus $H/Z_2 \cong \PSL(2,q)$.
Its Sylow $2$-subgroup is a dihedral group or, in the case that $q \equiv 
5\pmod 8$, a Klein four group. Hence, we have the 
same situation as in (A)(1)(b) with the same result. In particular,
the results of \cite{CT4} apply in case (B)(2)(b)(ii). 

Finally, in case (B)(2)(c), a Sylow $2$-subgroup $S$ of $H/Z_2$ is
isomorphic to the quotient of a Sylow $2$-subgroup of $\GL(2,q)$ by its
center. So $S$ is a dihedral group of order at least $8$
(cf. \cite{CF}). Hence, the conclusion is the same as in case
(A)(1)(b)(ii).  
\end{proof}

\section{Appendix: Classification of Endotrivial Modules in 
the Cyclic Sylow Subgroup Setting}\label{sec:appendix}

The following result summarizes the main results of \cite{CMN3} and
provides a classification of the group of endotrivial 
modules for finite groups of Lie type $A$ 
in the case when a Sylow $p$-subgroup of $G$ is cyclic. 

\begin{thm} \label{thm:cyclic}
Suppose that $\SL(n,q) \subseteq G \subseteq \GL(n,q)$ and that 
$Z \subseteq Z(G)$. Assume that the Sylow $p$-subgroup $S$ 
of $G$ is cyclic and let $N = N_G(S)$. 
Then $T(G/Z) \cong T(\whn)$ where $\whn = N_{G/Z}(\whs)$ and 
$\whs$ is a Sylow $p$-subgroup of $G/Z$. 
Moreover, $T(\whn)$ is the middle term of a not necessarily split
extension 
\begin{equation} \label{eq:sequence}
\xymatrix{
1 \ar[r] & X(\whn) \ar[r] & T(\whn) \ar[r] & T(\whs) \ar[r] & 0
}
\end{equation} 
where $X(\whn)\cong N/(Z[N,N])$ is the 
group of isomorphism classes of
$k\whn$-modules of dimension one.
Let $D = \Det(G) \cong G/\SL(n,q)$ and let 
$d = \vert D \vert$. In the case that $Z = \{1\}$ we have the 
following. 
\begin{itemize}
\item[(a)] If $p=2$ then $n=1$, and $T(G) \cong D/\Det(S)$.
\item[(b)] Suppose that $p>2$ divides $q-1$. If $p$ divides $d$,
then $n = 1$ and $T(G) \cong \bZ/a\bZ \oplus \bZ/2\bZ$,
where $d = ap^t$ for $a$ relatively prime to $p$.
\item[(c)] If $p>2$ divides $q-1$ and $p$ does not divide $d$, 
then there are two possibilities:
\begin{itemize}
\item[(i)] assuming that~2 does not divide $(q-1)/d$, then 
$T(G) \cong \bZ/d\bZ \oplus \bZ/4\bZ$. 
\item[(ii)] assuming that~2 divides $(q-1)/d$, then $
T(G) \cong \bZ/d\bZ \oplus \bZ/4\bZ \oplus \bZ/2\bZ$. 
\end{itemize}
\item[(d)] Suppose that $p$ does not divide $q-1$. Let $e$ be the least 
integer such that $p$ divides $q^e-1$. Then $n = e+f$ for some 
$f$ with $0 \leq f < e$. Let $m = (q-1)/d$ and $\ell = 
\gcd\big(m(q-1),q^e-1\big)/m$. Then we have two possibilities: 
\begin{itemize}
\item[(i)] if $f=0$ then $T(G) \cong \bZ/\ell\bZ \oplus \bZ/2e\bZ$,
\item[(ii)] while if $f >0$, then  
$T(G) \cong \bZ/2e\bZ \oplus \bZ/(q-1)\bZ \oplus \bZ/d\bZ$ 
(except that $T(G) \cong \bZ/2e\bZ \oplus 
\bZ/2\bZ$ if both $f=2$ and $q = 2$).
\end{itemize}
\end{itemize}
\end{thm}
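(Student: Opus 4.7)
The plan is to first establish the isomorphism $T(G/Z)\cong T(\whn)$ via cyclic-defect theory, next derive the exact sequence relating $T(\whn)$ to $T(\whs)$ and $X(\whn)$, and finally reduce cases (a)--(d) to explicit calculations already performed in \cite{CMN3}. For the reduction to $\whn$, since $\whs$ is cyclic it is a TI (trivial intersection) subgroup of $G/Z$, so the Brauer correspondence (in the form of \cite[Theorems~3.2 and~3.6]{MT} used in the proof of Theorem~\ref{thm:sl2case}) yields a stable equivalence of Morita type between the module categories of $k(G/Z)$ and $k\whn$, which induces the isomorphism $T(G/Z)\cong T(\whn)$.

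For the sequence (\ref{eq:sequence}), note that $\whn$ has the cyclic normal Sylow $\whs$, so $\whn=\whs\rtimes C$ with $C$ a $p'$-group. The restriction map $T(\whn)\to T(\whs)$ is surjective: an inverse image of the generator $[\Omega_{\whs}(k)]$ is $[\Omega_{\whn}(k)]$, which restricts to $\Omega_{\whs}(k)$ up to a projective summand. The kernel consists of classes of $k\whn$-modules with trivial Sylow restriction. Since $\whs$ is normal in $\whn$, Proposition~\ref{prop:normal} forces every such indecomposable module to have dimension one, so the kernel equals $X(\whn)$. Because the action of $C$ on $\whs$ is such that the $p$-part of the abelianization vanishes, $X(\whn)$ is identified with the full abelianization $\whn/[\whn,\whn]$, and pulling back through $\whn=N/Z$ gives $N/(Z[N,N])$.

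For the case analysis in (a)--(d), Proposition~\ref{prop:sylow1} together with the condition that $\whs$ be cyclic forces $\lfloor n/e\rfloor=1$, hence $e\leq n<2e$. In case (a), $p=2$ forces $n=1$ (as $\SL(2,q)$ has generalized quaternion Sylows), so $G\subseteq\GL(1,q)$ is abelian, $T(\whs)=0$, and $T(G)\cong X(\whn)\cong D/\Det(S)$. In cases (b)--(d), $p>2$ and $\whs$ is a cyclic $p$-group of order at least $p$, so $T(\whs)\cong\bZ/2\bZ$ and one needs to determine both the structure of $N/(Z[N,N])$ and whether the extension (\ref{eq:sequence}) splits. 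In each subcase the normalizer $N=N_G(S)$ can be described explicitly as the normalizer of a cyclic torus of order $q^e-1$ inside the Levi subgroup $(\GL(e,q)\times\GL(f,q))\cap G$; from this the abelianization is computed in closed form, yielding the cyclic factor $\bZ/d\bZ$ (or $\bZ/\ell\bZ$ when $f=0$) from the determinant, a $\bZ/2e\bZ$ factor from the cyclic Weyl complement together with a sign character, and the auxiliary $\bZ/2\bZ$ tracking the parity of $(q-1)/d$.

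The main obstacle is controlling the splitting of the extension (\ref{eq:sequence}) in each subcase, which requires a computation of the specific cohomology class in $H^2(T(\whs),X(\whn))\cong H^2(\bZ/2\bZ,X(\whn))$; equivalently, one must decide whether a suitable lift of $[\Omega_{\whn}(k)]$ has order $2$ or order $4$ in $T(\whn)$. The arithmetic underlying these computations, including the appearance of the invariant $\ell=\gcd(m(q-1),q^e-1)/m$ in case (d) from the image of the determinant on the Coxeter torus of $\GL(e,q)$, is carried out explicitly in \cite{CMN3}; the role of the present theorem is to consolidate those results, so the proof amounts to assembling and quoting the relevant statements from \cite{CMN3}.
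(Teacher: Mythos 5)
The paper offers no proof of this theorem: it appears in the appendix expressly as a restatement of the main results of \cite{CMN3} (Theorem 1.2 there), so your plan of assembling and quoting \cite{CMN3} is exactly the paper's approach, and your surrounding sketch (reduction to the Sylow normalizer, the sequence with kernel $X(\whn)$ coming from Proposition~\ref{prop:normal} applied to the normal cyclic Sylow, and the case-by-case normalizer and splitting computations deferred to \cite{CMN3}) is consistent with how those results are established. One caveat: a cyclic Sylow subgroup is not automatically a TI subgroup, so the step ``since $\whs$ is cyclic it is TI'' is not a valid inference as stated; here the TI property does hold because in the noncyclic-trivial cases one has $e\geq 2$, so the centralizer of any nontrivial element of $S$ is contained in $\GL(1,q^e)\times\GL(f,q)$ (intersected with $G$) and has a normal Sylow $p$-subgroup, but the cleaner route, and the one underlying \cite{CMN3}, is the general cyclic-Sylow result of \cite{MT}, which requires no TI hypothesis.
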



\begin{thebibliography}{*******}

\bibitem{AM}  A. Adem, R. Milgram, {\em Cohomology of finite groups},
  Grundlehren der Mathematischen Wissenschaften, {\sf 309}. Springer-Verlag,
  Berlin, 1994. 


\bibitem{Al} J. Alperin,
A construction of endo-permutation modules, 
{\em J. Group Theory}, {\sf 4},  (2001), 3--10. 

\bibitem{ABG} J. Alperin, R. Brauer, D. Gorenstein, Finite groups with quasi-dihedral and wreathed Sylow $2$-subgroups, {\em Trans. Amer. Math. Soc.},
  {\sf 151}, (1970), 1--261.

\bibitem{AF}  J. Alperin, P. Fong, Weights for symmetric and general linear groups,
 {\em J. Algebra}, {\sf 131}, (1990), 2--22.


\bibitem{Bal} P. Balmer, Modular representations of finite 
groups with trivial restriction to Sylow subgroups, {\em J. European
  Math. Soc.}, {\sf 15}, (2013), 2061--2079.  


\bibitem{BlCh} F. M. Bleher and T. Chinberg, {\em Universal deformation
rings and cyclic blocks}, Math.Ann. {\sf 318}, (2000), 805–-836.


\bibitem{Ca} J. Carlson, Constructing endotrivial modules,
{\em J. Pure and Appl. Algebra},  {\sf 206}, (2006), 83--110.

\bibitem{CHM} J. Carlson, D. Hemmer and N. Mazza, The group of endotrivial modules for the symmetric and alternating groups,
{\em Proc. Edinburgh Math. Soc.}, {\sf 53}, (2010), 83--95.

\bibitem{CMN1} J.~Carlson, N.~Mazza and D.~Nakano, Endotrivial modules for finite groups of Lie type,
{\em J. Reine Angew. Math.}, {\sf  595}, (2006), 93--120.

\bibitem{CMN2} J.~Carlson, N.~Mazza and D.~Nakano, Endotrivial modules for the symmetric and alternating groups,
{\em Proc. Edinburgh Math. Soc.},  {\sf 52}, (2009), 45--66.

\bibitem{CMN3} J.~Carlson, N.~Mazza and D.~Nakano, Endotrivial modules for the general linear group in  a nondefining characteristic,
{\em Math. Zeit.}, {\sf 278}, (2014), 901-925. 

\bibitem{CMT} J.~Carlson, N.~Mazza and J.~Th\'evenaz, Endotrivial modules for $p$-solvable groups,
{\em Trans. Amer. Math. Soc.},  {\sf 363}, (2011), no.~9, 4979--4996.

\bibitem{CMT2} J.~Carlson, N.~Mazza and J.~Th\'evenaz, Endotrivial
  modules over groups with quaternion or semi-dihedral Sylow
  $2$-subgroup, {\em J. European Math. Soc}, {\sf 15}, (2013), 157--177.

\bibitem{CT2} J.~Carlson and J.~Th\'evenaz, The classification of endo-trivial modules,
{\em Invent. Math.},  {\sf 158},  (2004), no.~2, 389--411.

\bibitem{CT3} J.~Carlson and J.~Th\'evenaz, The classification of torsion endotrivial modules,
{\em Annals of  Math.}, {\sf  165},  (2005), 823--883.

\bibitem{CT4} J.~Carlson and J.~Th\'evenaz, The torsion group of 
endotrivial modules, {\em Algebra and Number Theory},  to appear.


\bibitem{CF} R. W. Carter, P. Fong, The Sylow $2$-subgroups of finite
  classical groups, {\em J. Algebra}, {\sf 1}, (1964), 139--151.

\bibitem{D1} E.~C.~Dade, Endo-permutation modules over $p$-groups, I,  {\em Annals of Math.}, {\sf 107}, (1978), 459--494. 

\bibitem{D2} E.~C.~Dade, Endo-permutation modules over $p$-groups II, {\em Annals of  Math.}, {\sf 108}, (1978), 317--346.

\bibitem{DRV} A. Diaz, A. Ruiz, A. Viruel, All $p$-local finite groups
  of rank two for odd prime $p$. {\em Trans. Amer. Math. Soc.}, {\sf 359}, (2007), 1725--1764.



\bibitem{GLS} D. Gorenstein, R. Lyons, R. Solomon, {\em The 
Classification of the Finite Simple Groups}, Volume 40, Number 3, AMS, 1998.

\bibitem{LMS} C. Lassueur, G. Malle, E. Schulte, Simple endotrivial modules for quasi-simple groups, 
{\em J. Reine Angew. Math.}, to appear.

\bibitem{MT} N. Mazza and J. Th\'evenaz, Endotrivial modules in the cyclic case,
{\em Arch. Math.}, {\sf 89}, (2007), 497--503.

\bibitem{NR} G. Navarro and G. Robinson, On endo-trivial modules for p-solvable groups, {\em Math. Zeit.},  {\sf 270}, (2012), no. 3-4, 983--987.

\bibitem{Rob} G. Robinson, On simple endotrivial modules, {\em Bull. London Math. Soc.},  {\sf 43}, (2011), no. 4, 712--716.

\bibitem{W} A. Weir, Sylow p-subgroups of the classical groups over finite fields with characteristic prime to p, {\em Proc. Amer. Math. Soc.}, {\sf  6},  (1955), 529--533. 

\end{thebibliography}
\end{document}